\newcommand \R {\mathbf R}
\newcommand \Z {\mathbf Z}
\newcommand \Q {\mathbf Q}
\newcommand \N {\mathbf N}
\renewcommand \C {\mathbf C}
\newtheorem{theorem}{Theorem}[section]
\newtheorem{lemma}[theorem]{Lemma}
\newtheorem{proposition}[theorem]{Proposition}
\newtheorem*{theorem*}{Theorem}
\newtheorem*{ques*}{Question}
\newtheorem*{prop*}{Proposition}
\theoremstyle{definition}
\newtheorem{definition}[theorem]{Definition}
\newtheorem{example}[theorem]{Example}
\theoremstyle{remark}
\newtheorem{remark}[theorem]{Remark}
\numberwithin{equation}{section}
\begin{document}

\title[Sublinear quasiconformality and large-scale geometry]{Sublinear quasiconformality and the large-scale geometry of Heintze groups}


\author{Gabriel Pallier}
\address{}
\address{ 
Laboratoire de Math{\'e}matiques d'Orsay, Univ.\ Paris-Sud, CNRS, Universit{\'e} Paris-Saclay, 91405 Orsay, France.}
\email{gabriel.pallier@math.u-psud.fr}
\email{gabriel.pallier@dm.unipi.it}

\subjclass[2010]{Primary 20F67, 30L10; Secondary 20F69, 53C23, 53C30, 22E25.}

\date{\today}
\thanks{G.P. was partially supported by the Agence nationale de la recherche, ANR-15-CE40-0018 SRGI and by European Research Council
 (ERC Starting Grant 713998 GeoMeG `\emph{Geometry of Metric Groups}'}

\dedicatory{}

\begin{abstract}
    This article analyzes sublinearly quasisymmetric homeomorphisms (generalized quasisymmetric mappings), and draws applications to the sublinear large-scale geometry of negatively curved groups and spaces. 
    It is proven that those homeomorphisms lack analytical properties but preserve a conformal dimension and appropriate function spaces, distinguishing certain (nonsymmetric) Riemannian negatively curved homogeneous spaces, 
    and Fuchsian buildings, up to sublinearly biLipschitz equivalence (generalized quasiisometry).
\end{abstract}

\maketitle

An embedding $f$ between metric spaces is quasisymmetric if there is an increasing homeomorphism $\eta:[0, +\infty) \to [0, + \infty)$ such that for any $x,y,z$ in the source space and positive real $t$,
\begin{equation}
    \label{eq:definition-quasisym-homeo}
d(x,y) \leqslant t d(x,z) \implies d(f(x), f(y)) \leqslant \eta(t) d(f(x), f(z)).
\end{equation}
The properties of sufficiently well-behaved compact metric spaces that are invariant under quasisymmetric homeomorphisms are known to be counterparts of the coarse (or quasiisometrically invariant) properties of proper geodesic Gromov-hyperbolic spaces, the two categories being related by the Gromov boundary and hyperbolic cone functors (\cite{BonkSchrammEmbeddings}, \cite[2.5]{RoeLectures}). Instances are the conformal dimension \cite{PansuDimConf} and the $\ell_p$ or $L_p$ cohomology  \cite{BourdonPajotCohomologieLp}. 

This paper is part of our aim to transpose this equivalence by replacing quasiisometries with sublinearly biLipschitz equivalences, which originated from the work of Cornulier on the asymptotic cones of connected Lie groups\footnote{Beware that we use the terminology of \cite{cornulier2017sublinear}.} \cite{CornulierDimCone}. Here the sublinear feature is described by an asymptotic class $O(u)$, where $u$ is a strictly sublinear nondecreasing positive function on the half line such that $\limsup_r u(2r)/u(r)<+\infty$, e.g. $u(r) =\log r$ (we call such a function admissible).

In previous work the Gromov-boundary behavior of sublinearly biLipschitz equivalences between Gromov-hyperbolic spaces was characterized \cite[Theorem 1]{pallier2018large}. It differs from that of quasisymmetric homeomorphisms sublinearly in a certain sense; we shall indicate how in \ref{subsec:sublinear-quasiconformality}. 
The purpose of the present paper is to push further the analysis of those boundary mappings and identify the structure preserved on the boundary. 
A numerical invariant is derived. It is denoted by $\operatorname{Cdim}_{O(u)}$; Pansu's conformal dimension introduced in \cite[3]{PansuDimConf} and usually denoted $\operatorname{Cdim}$ corresponds to $\operatorname{Cdim}_{O(1)}$. 
We compute this invariant and prove that it equals $\operatorname{Cdim}$ on the examples originally studied by Pansu and Bourdon (that we recall below).
Certain function spaces of locally bounded $p$-variation, that are carried by sublinearly quasisymmetric mappings up to shifts in parameters, are also constructed. These functions are invariant along foliations in the boundary; the dependence of this invariance with respect to $p$ provides further invariants. This latter approach is inspired from Bourdon \cite[p.248]{BourdonCaract}, Bourdon-Kleiner \cite[Section 10]{BourdonKleinerCLPi} and Carrasco Piaggio \cite[p.465]{CarrascoOrliczHeintze} (with different functional spaces). Functions of bounded $p$-variation were also used in Xie's work \cite{XieLargeScale} on a problem close to ours that we will mention below.

A purely real Heintze group is a simply connected solvable group which splits as an extension of $\R$ by its nilradical $N$, associated to $\rho:\R\to\operatorname{Aut}(\operatorname{Lie}(N))$ with positive real roots.
From such a group $S$ one can make another one, denoted $S_{\infty}$, by forgetting the unipotent part of $\rho$. Since the nilradical of $S$ is uniformly exponentially distorted, following Cornulier one can prove that this does not alter the logarithmic sublinear large-scale structure (see \cite[Th 1.2]{CornulierCones11} recalled here in \ref{subsubsec:diagonalizable-type}).
We prove a partial converse.

\begin{theorem*}
\label{thm:roots}
Let $S$ and $S'$ be purely real Heintze groups with abelian nilradicals. Let $u$ be any sublinear, admissible function. 
If $S$ and $S'$ are $O(u)$-sublinearly biLipschitz equivalent then $S_{\infty}$ and $S'_\infty$ are isomorphic.
\end{theorem*}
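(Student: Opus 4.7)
My plan is to transfer the statement to the boundary, recover the roots of the derivation from conformal-type invariants, and conclude by the classification of diagonalizable Heintze groups. First, by \cite[Th 1.2]{CornulierCones11} the group $S$ is $O(\log r)$-sublinearly biLipschitz equivalent to $S_\infty$, and similarly for $S'$; since $u$ is admissible and therefore dominates the logarithm after rescaling, the hypothesis propagates to an $O(u)$-SBE between $S_\infty$ and $S'_\infty$. Applying the boundary extension result \cite[Theorem 1]{pallier2018large} then produces a sublinearly quasisymmetric homeomorphism $\partial S_\infty \to \partial S'_\infty$. Each punctured boundary is identified with the abelian nilradical $N=\R^n$ equipped with its parabolic visual metric; in coordinates aligned with the eigenspaces of the normalized derivation, with positive eigenvalues $1=\lambda_1\leqslant \lambda_2 \leqslant \cdots \leqslant \lambda_n$, the visual distance is comparable to $\max_i |x_i-y_i|^{1/\lambda_i}$, and the boundary carries a nested flag of linear foliations by partial sums of eigenspaces.

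Second, I would extract the multiset $\{\lambda_i\}$ from the invariants constructed earlier in the paper. The conformal dimension $\operatorname{Cdim}_{O(u)}$, shown in the paper to coincide with Pansu's $\operatorname{Cdim}$ on these examples and to be a known function of the roots, provides one numerical constraint. The deeper information comes from the function spaces of locally bounded $p$-variation: these are invariant along the leaves of each eigenspace foliation and are carried by the sublinearly quasisymmetric boundary map up to a controlled shift in parameters; for each foliation, the critical exponent at which invariance breaks down is an explicit monotone function of the corresponding ratio $\lambda_i/\lambda_1$. Reading off these critical exponents on both sides and comparing them via the boundary map recovers the ordered multiset of ratios; since a purely real Heintze group with abelian nilradical and diagonalizable derivation is classified up to isomorphism by the roots up to a common positive scalar (which corresponds to rescaling the generator of $\R$), we obtain $S_\infty\cong S'_\infty$.

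I expect the main obstacle to lie in the analytical step underpinning the second paragraph, namely showing that sublinear quasisymmetry transports the $p$-variation spaces with only a sublinear defect in the relevant parameters and that this defect is tight enough not to blur the discrete set of critical exponents encoding the individual roots. This is the sublinear counterpart of arguments of Bourdon, Bourdon--Kleiner and Carrasco Piaggio in the quasisymmetric setting; the novelty is that the modulus of sublinear quasisymmetry depends on an auxiliary scale, which must be absorbed into the $p$-variation seminorms without averaging away the fine information that separates the $\lambda_i/\lambda_1$'s.
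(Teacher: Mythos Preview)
Your strategy is the paper's strategy: reduce to diagonalizable type via Cornulier, pass to the boundary, use $\operatorname{Cdim}_{O(u)}$ to pin down $\operatorname{tr}(\alpha)$, and then use the $p$-variation algebras to recover the eigenvalue flag. Two points deserve attention.

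First, a slip: admissible does \emph{not} imply that $u$ dominates $\log$; $u(r)=\sup(1,\log\log r)$ is admissible. The correct reduction is to replace $u$ by $u'=\max(u,\log)$, which is still admissible, and run the whole argument with $u'$; the conclusion $S_\infty\cong S'_\infty$ makes no reference to $u$, so nothing is lost.

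Second, and more seriously, you silently assume that the boundary homeomorphism restricts to a map between the \emph{punctured} boundaries, i.e.\ that it sends the focal point $\omega$ of $S_\infty$ to the focal point $\omega'$ of $S'_\infty$. A priori $\partial_\infty f(\omega)$ can be any point of $\partial_\infty S'_\infty$, and then your identification of the punctured boundary with $\R^n$ carrying the parabolic visual quasimetric breaks down on the target side. The paper supplies this step separately (Lemma~\ref{lem:Cornulier-trick}, after Cornulier): one observes that left translations act transitively on $\partial_\infty S\setminus\{\omega\}$, so $\operatorname{SBE}(S)$ has at most two orbits on $\partial_\infty S$; conjugating through $f$, the same holds for $S'$, and in either case one can post-compose $f$ by an element of $\operatorname{SBE}(S')$ to force $\partial_\infty f(\omega)=\omega'$. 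You should insert this argument before passing to coordinates on the punctured boundary.
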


This answers positively to Cornulier {\cite[1.16(1)]{cornulier2017sublinear}} who raised the question for $\dim S =3$.
For comparison, it is known that two purely real Heintze groups with abelian nilradicals are quasiisometric if and only if they are isomorphic by the work of Xie \cite{XieLargeScale} (also obtained by Carrasco Piaggio \cite[1.10]{CarrascoOrliczHeintze}).
In the vein of conjecture \cite[6C2]{CornulierQIHLC}, we ask:

\begin{ques*}
Let $S$ and $S'$ be purely real Heintze group. Assume that $S$ and $S'$ are sublinearly biLipschitz equivalent. Are $S_{\infty}$ and $S_{\infty}'$ isomorphic?
\end{ques*}

A positive answer would imply the previous theorem as well as  \cite[Theorem 2]{pallier2018large}. 
The classification problem can be motivated beyond Lie groups by the fact that the purely real Heintze groups are known to parametrize other objects:
\begin{itemize}
    \item 
    The commability\footnote{Namely, to such a group $G$ one can associate the purely real core of the unique focal-universal group commable to $G$. Commability is a variant of weak commensurability adapted to the locally compact setting, see \cite{Cornulier_Focal}. 
    For the definition of a hyperbolic locally compact group we refer the reader to \cite{CCMT}.} classes of compactly generated locally compact groups that are hyperbolic with a topological sphere at infinity \cite[5.16]{Cornulier_Focal}.
    \item
    Together with orbits of scalar products, the connected Riemannian negatively curved homogeneous spaces \cite{Heintze} \cite[Corollary 5.3]{GordonWilson}.
\end{itemize}
Unlike Heintze groups, hyperbolic buildings become rare in large dimension \cite{GaboPaul}. 
The two-dimensional case displays a vast subfamily with local finiteness properties, that of Fuchsian buildings, for which the dimension at infinity $\operatorname{Cdim} \partial_\infty$ is known: it was computed by Bourdon in 1997 \cite{BourdonFuchsI} for some of them and 2000 in full generality \cite{BourdonFuchsII}. We check that $\operatorname{Cdim}_{O(u)}\partial_\infty$ equals the former in this case, distinguishing pairs of Fuchsian buildings up to sublinear biLipschitz equivalence. Here is the statement for the Bourdon buildings.

\begin{prop*}[Strengthening of {\cite[Th{\'e}or{\`e}me 1.1]{BourdonFuchsI}}]
Let $p,q \in \Z$ with $p \geqslant 5$ and $q \geqslant 2$.
Let $I_{pq}$ be a Bourdon building (that is, a right-angled Fuchsian building with constant thickness $q$). For all strictly sublinear admissible $u$,
\begin{equation}
	\label{eq:confdim-bourdon-buildings}
    \operatorname{Cdim}_{O(u)} \partial_\infty I_{pq} = \operatorname{Cdim}_{O(1)} \partial_\infty I_{pq} = 1 + \frac{\log(q-1)}{\operatorname{argch} ((p-2)/2)}.
\end{equation}
\end{prop*}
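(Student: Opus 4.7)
The plan is to establish the two inequalities $\operatorname{Cdim}_{O(u)}\partial_\infty I_{pq} \leqslant \operatorname{Cdim}_{O(1)}\partial_\infty I_{pq}$ and $\operatorname{Cdim}_{O(u)}\partial_\infty I_{pq} \geqslant 1 + \log(q-1)/\operatorname{argch}((p-2)/2)$; combined with Bourdon's computation \cite{BourdonFuchsI} identifying $\operatorname{Cdim}_{O(1)}\partial_\infty I_{pq}$ with the rightmost quantity, this will give the three-term equality \eqref{eq:confdim-bourdon-buildings}.

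The upper bound is general and should hold for any bounded metric space: any classical quasisymmetric homeomorphism is a fortiori $O(u)$-sublinearly quasisymmetric, because the $O(u)$-inequality is a relaxation of \eqref{eq:definition-quasisym-homeo} obtained by allowing sublinear errors at large scale. Hence the $O(u)$-conformal gauge contains the classical conformal gauge, and $\operatorname{Cdim}_{O(u)}$ is an infimum of Hausdorff dimension over a larger family of admissible metrics than $\operatorname{Cdim}_{O(1)}$.

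The lower bound requires lifting Bourdon's apartment-based argument to the sublinearly quasiconformal setting, using the function space invariant announced in the introduction. In $I_{pq}$ the apartments are hyperbolic planes whose boundary circles foliate $\partial_\infty I_{pq}$, and Bourdon's proof exploits the fact that at the critical exponent $Q = 1 + \log(q-1)/\operatorname{argch}((p-2)/2)$, functions of locally bounded $Q$-variation which are locally constant along the foliation give a nontrivial class obstructing any metric of Hausdorff dimension strictly less than $Q$ in the conformal gauge. The task is to show that an $O(u)$-sublinearly quasisymmetric homeomorphism carries locally bounded $p$-variation functions (on one visual metric) into locally bounded $p'$-variation functions (on another), with a parameter shift $|p - p'|$ which can be made arbitrarily small; one then runs Bourdon's obstruction argument in the image metric to rule out Hausdorff dimension below $Q$.

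The main obstacle I anticipate is precisely the control of the parameter shifts and the verification that the critical exponent $Q$ is stable under them. An $O(u)$-sublinearly quasisymmetric map distorts cross-ratios by a multiplicative factor tending to $1$ at infinity, which in turn produces only an arbitrarily small perturbation of the variation exponent once one integrates over the natural scales of the apartment foliation; the stability of $Q$ under such perturbations should follow from the sharp Loewner-type behavior of combinatorial $Q$-modulus on apartments, which is the same mechanism that fixes the value of the critical exponent in Bourdon's original computation. The delicate bookkeeping is to make sure that the sublinear corrections accumulated along a typical curve are swallowed by the strict admissibility of $u$ combined with the doubling of the natural visual metric on $\partial_\infty I_{pq}$.
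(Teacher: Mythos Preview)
Your upper bound argument rests on a misreading of the definition. In this paper $\operatorname{Cdim}_{O(u)}$ is \emph{not} defined as an infimum of Hausdorff dimensions over a gauge of metrics (Definition~\ref{dfn:conformal-dimension}); it is a supremum of exponents $p$ for which the combinatorial modulus $\operatorname{mod}_{p;k}^{\ell,m}$ is infinite, with quantifiers $\forall k\,\exists \ell,m$ ranging over $\mathcal{O}^+(u)$. Enlarging $u$ makes both quantifiers range over a larger class, so there is no a priori monotonicity in $u$. The actual upper bound is Lemma~\ref{lem:confdim-less-than-hausdim}, $\operatorname{Cdim}_{O(u)} \leqslant \operatorname{Hdim}$, combined with Bourdon's construction of a geodesic metric of Hausdorff dimension exactly $1+1/\mathscr{T}$ on $\partial_\infty I_{pq}$.

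Your lower bound strategy is also off target. Bourdon's argument does not use functions of bounded $Q$-variation constant along a foliation (there is no invariant foliation of $\partial_\infty I_{pq}$; apartment boundaries are circles, not leaves of a foliation). His argument, and the one here, is a direct modulus estimate on the curve family $\Gamma$ of apartment boundaries through a fixed chamber. The paper does not shift the exponent $p$ under $O(u)$-quasisymmetric maps; the embeddings \eqref{eq:continuous-embeddings-algebras} keep $p$ fixed and shift only the asphericity parameters $k,\ell$. The point you are missing is the diffusivity mechanism (Proposition~\ref{prop:diffusivity}): Bourdon's estimate $\nu\{\gamma:\gamma\cap\widehat b\neq\emptyset\}\leqslant Ce^{(1-p')\delta(b)}$ with $p'=1+1/\mathscr{T}$ must be upgraded to \eqref{ineq:diffusivity-assumption}, which requires a lower bound on $m_\gamma(\gamma\cap\widehat{r.b})$. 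Since the $\gamma$ are rectifiable, the reverse triangle inequality gives $m_\gamma(\gamma\cap\widehat{r.b})\gtrsim (e^{r(\delta(b))}-1)e^{-\delta(b)}$ once $\gamma$ meets $\widehat b$ and exits $\widehat{r.b}$; the sublinear factor $e^{r(\delta(b))}$ is then absorbed by working at $p=p'+\varepsilon$ for arbitrary $\varepsilon>0$. This is where the sublinearity of $r$ is used, and it is the step your proposal does not contain.
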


\subsubsection*{Conventions, notation}
    Through all the paper, $u : \R_{\geqslant 0} \to \R_{\geqslant 1}$ is a nondecreasing, strictly sublinear, doubling function, i.e. $u(r) \ll r$ as $r\to + \infty$ and $\sup_r u(2r)/u(r) < +\infty$. Examples are: $u(r) = \sup(1,r^\gamma)$ with $0 \leqslant \gamma < 1$ and $u(r) = \sup (1, \log (r)) $.
\label{chap:sublinconf}

\subsubsection*{Acknowledgement}
This work is part of the author's PhD thesis.
The author thanks his advisor Pierre Pansu for his long-time support and patience, Yves Cornulier for raising questions and pointing out \cite{DranishnikovSmith}, John Mackay for his interest and providing references, Pierre Boutaud, Arnaud Durand, and Anthony Genevois for useful discussions, Peter Ha{\"i}ssinsky for numerous remarks and corrections on the text.

\section{Sublinear quasiconformality}

\subsection{{$O(u)$}-quasisymmetric structures}

The notion of a quasisymmetric structure is a reformulation of that of a space with a quasidistance, where the emphasis is made on balls, their inclusion relations and relative sizes, rather than on a given quasidistance function. 
Related notions are: $b$-metric topological spaces \cite[IV.1]{MarDS}, Margulis structures \cite[p.62]{GroPanRigid}.

\label{subsec:definitions-sublin-conf}

\subsubsection{Definition}

\begin{definition}[Compare\footnote{In \cite[1.1 and 2.7]{PansuDimConf} they are called ``bonnes structures quasiconformes''. The ``bonne'' axiom is a disguised form of the quasi-triangle inequality, here (SC2).} {\cite[1.1 and 2.7]{PansuDimConf}} for $u=1$]
\label{dfn:sublin-conf-struct}
Let $Z$ be a set. 
A $O(u)$-quasisymmetric structure on $Z$ is a set $\beta$ of abstract balls\footnote{This formalism is here to avoid referring directly to centers and radii, which are preferable to diameters, but may not be uniquely defined. 
The notion of a constituent (see \cite[Definition 2]{EdgarPacking}) circumvents the problem of radii, but it still makes use of centers.} together with a realization map $\beta \to \mathcal{P}(Z) \setminus \lbrace \emptyset \rbrace$, $b \mapsto \widehat{b}$, a map ${\delta} : \beta \to \Z$ and a shift map $\mathbf Z_{\geqslant 0} \times \beta \to \beta$, $(k,b) \mapsto k.b$ such that
\begin{enumerate}[(SC1)]
\setcounter{enumi}{-1}
    \item 
    The shift is an action and $\delta$ is equivariant with respect to the shift:
    $\forall k,k' \in \mathbf Z_{\geqslant 0}$, $k'.k.b = (k'+k).b$ and
    $\forall k \in \Z, \forall b \in \beta,\, {\delta}(k.b) = {\delta}(b) -k$.
    \item
    $\forall k \in \Z_{\geqslant 0}, \, \forall b,b' \in \beta$, 
    \begin{enumerate}[(i)]
        \item 
        $\widehat{k.b} \supseteq \widehat{b}$
        \item
        if
        $\widehat{b} \subseteq \widehat{b'}$ then $\widehat{k.b} \subseteq \widehat{k.b'}$
        \item
        if $\delta(b)<\delta(b')$ then $\widehat{b} \nsubseteq \widehat{b'}$.
    \end{enumerate}
    \item
    There exists $n_0 \in \Z_{\geqslant 0}$ and a function $q : \Z_{\geqslant n_0} \to \Z_{\geqslant 0}$, $q = O(u)$ and  such that
    \[ \forall b, b' \in \beta,\, \left( n_0 \leqslant {\delta}(b) \leqslant {\delta}(b'), \widehat{b} \cap \widehat{b'} \neq \emptyset \right) \implies \widehat {q({\delta}(b)).b} \supset \widehat{b'}.  \]
    \item
    $\forall x \in Z, \forall y \in Z \setminus \lbrace x \rbrace, \forall n \in \Z, \exists b \in \beta : {\delta}(b) \geqslant n, x \in \widehat b, y \notin \widehat b$.
\end{enumerate}
\end{definition}

\begin{example}[Space with a quasidistance]
\label{exm:space-with-a-quasidist}
Recall that a quasidistance on a set $\mathcal{Z}$ is a kernel $\varrho : \mathcal{Z} \times \mathcal{Z} \to \R$ with the axioms of a distance, the triangle inequality being replaced by
\begin{equation}
    \tag{$\triangle_K$}
    \label{ineq:triangle-K}
    \forall(x,y,z) \in \mathcal Z^3,\, \varrho(x,z) \leqslant K\left( \varrho(x,y) \vee \varrho(y,z) \right)
\end{equation}
where $K \in \R_{\geqslant 1}$ is a constant and $\vee$ denotes the binary function ``max''. Given a dense\footnote{A quasidistance induces a topology, see \cite[1.99]{PajotRuss}.} subspace (to be thought of as a set of centers) $X \subseteq \mathcal{Z}$, a quasidistance gives to $\mathcal{Z}$ a $O(1)$-quasisymmetric structure in which $\beta = X \times \Z$ and for $b = (x,n)$ in $\beta$ and $k \in \Z$, ${\delta}(b) =n$, $k.b=(x,n-k)$ and 
$\widehat{b} = \left\{ z \in Z : \varrho(x,z) \leqslant e^{-n} \right\}$.
\eqref{ineq:triangle-K} is responsible for (SC2) with $q = K^2$, the separation axiom for (SC3).
\end{example}

\begin{example}
    \label{exm:reals}
    $Z = \R$ and $\beta$ is $\R \times \Z$; for $b=(x,n)$, ${\delta}(b)=-n$. 
    For all $b=(s,n)$ in $\beta$, $\widehat{b} = s+e^{-n}[0,1]$ (One can replace $[0,1]$ by any bounded closed interval). One can take $q=3/2$ in (SC2). The shift is such that $\widehat{k.b} = e^k \widehat{b}$.
\end{example}

It turns out that once $Z$ is endowed with a sublinear quasisymmetric structure, $Z$ is also equipped with the structure (and especially the topology) of a uniform space, that is a weakening of a metric structure in a sense that we recall in the statement below.

\begin{proposition}
\label{prop:uniform-structure}
Let $(Z,\beta,q,\delta)$ be a $O(u)$-quasisymmetric structure.
For all $n \in \Z$, define 
\begin{equation*}
    \label{eq:uniform-structure}
    E_n = \bigcup_{b \in \beta : {\delta}(b) \geqslant n} \widehat{b} \times \widehat{b}. 
\end{equation*}
Then $E_n$ forms a fundamental system of entourages, endowing $Z$ with a uniform structure, i.e. (denoting $\Delta$ the diagonal in $Z \times Z$):
\begin{itemize}
\item[(U'\textsubscript{I})]
$\cap_n E_n = \Delta$
\item[(U'\textsubscript{II})]
for every $n,m$ there is $p$ such that $E_p \subset E_n \cap E_m$
\item[(U'\textsubscript{III})]
for every $n$ there is $m$ such that
\begin{equation}
    \forall x,y,z \in Z, \, \lbrace (x,y) \rbrace \cup \lbrace (y,z) \rbrace \subset E_m \implies (x,z) \in E_n.
    \tag{$E_m^2 \subset E_n$}
    \label{eq:goal-for-prop-uniform}
\end{equation}
\end{itemize}
\end{proposition}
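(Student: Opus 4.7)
The plan is to verify each of the three axioms in turn, in order of increasing difficulty, using (SC1)--(SC3) together with the strict sublinearity of $u$.

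Axiom (U'\textsubscript{II}) is immediate: since $\{b \in \beta : \delta(b) \geqslant p\}$ shrinks as $p$ grows, the family $(E_p)$ is nested decreasing, and one has $E_p \subset E_n \cap E_m$ for $p = \max(n,m)$.

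Axiom (U'\textsubscript{III}) is the main use of (SC2) combined with sublinearity. Given $(x,y), (y,z) \in E_m$, I would unpack witnesses $b_1, b_2 \in \beta$ of level $\geqslant m$ with $\{x,y\} \subset \widehat{b_1}$ and $\{y,z\} \subset \widehat{b_2}$; the shared point $y$ forces $\widehat{b_1} \cap \widehat{b_2} \neq \emptyset$. After swapping if needed, assume $\delta(b_1) \leqslant \delta(b_2)$; since $m$ is free, take $m \geqslant n_0$. Then (SC2) yields $\widehat{q(\delta(b_1)).b_1} \supset \widehat{b_2}$, and together with $\widehat{q(\delta(b_1)).b_1} \supseteq \widehat{b_1}$ from (SC1)(i) both $x$ and $z$ lie in $\widehat{q(\delta(b_1)).b_1}$, a ball of level $\delta(b_1) - q(\delta(b_1))$. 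Because $q = O(u)$ with $u$ strictly sublinear, $k - q(k) \to +\infty$, so for any target $n$ one can pick $m$ large enough that $k - q(k) \geqslant n$ uniformly over $k \geqslant m$; this gives $E_m \circ E_m \subset E_n$.

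Axiom (U'\textsubscript{I}), the Hausdorff separation, is where (SC3) enters. I would argue by contradiction, assuming $x \neq y$ but $(x,y) \in E_N$ for every $N$: this produces, for each $N$, a ball $b'_N \in \beta$ of level $\geqslant N$ with $x, y \in \widehat{b'_N}$. Using (SC3), I extract a cofinal family of separators $c_k$ at levels $m_k \to +\infty$, each with $x \in \widehat{c_k}$ and $y \notin \widehat{c_k}$. Applying (SC2) to $(c_k, b'_N)$ for $N \geqslant m_k$ traps $\widehat{b'_N} \subset \widehat{q(m_k).c_k}$ and forces $y$ to lie in every $\widehat{q(m_k).c_k}$. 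Extracting a contradiction from this family of containments is the obstacle I expect to be the main one; the intended leverage is that strict sublinearity guarantees $m_k - q(m_k) \to +\infty$, so that the $q$-shifted separators retain arbitrarily high level, and (SC1)(iii) can then be brought in to obstruct the simultaneous containment of $\widehat{b'_N}$ inside ever higher-level sets from which $y$ should be absent. This Hausdorff-type step is the most delicate piece; the other two axioms reduce to direct applications of (SC1) and (SC2).
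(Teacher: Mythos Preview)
Your treatment of (U'\textsubscript{II}) and (U'\textsubscript{III}) is correct and is essentially the paper's argument. For (U'\textsubscript{III}) the paper packages the same computation through the kernel $\varrho(x,y)=\exp\bigl(-\inf\{\delta(b):\{x,y\}\subset\widehat b\}\bigr)$, proves a weak triangle inequality $(\triangle_{O(u)})$ with error $v=O(u)$, and then picks $m$ large enough that $m-v(m)\geqslant n$; your direct ball-by-ball argument with (SC2) and (SC1)(i) is the unpacked version of this.

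The divergence is over (U'\textsubscript{I}). The paper dispatches it in a single clause as a consequence of (SC3), placing all the effort into (U'\textsubscript{III}); you invert this emphasis and declare (U'\textsubscript{I}) the delicate step. More to the point, your sketch for (U'\textsubscript{I}) does not close. You say that (SC1)(iii) will ``obstruct the simultaneous containment of $\widehat{b'_N}$ inside ever higher-level sets from which $y$ should be absent'', but your own application of (SC2) shows the opposite: since $y\in\widehat{b'_N}\subset\widehat{q(m_k).c_k}$, the point $y$ is \emph{present} in every $q$-shifted separator, not absent. And (SC1)(iii) forbids a ball of lower $\delta$ from sitting inside one of higher $\delta$; here $\delta(b'_N)\geqslant m_k>m_k-q(m_k)=\delta\bigl(q(m_k).c_k\bigr)$, so the containment $\widehat{b'_N}\subset\widehat{q(m_k).c_k}$ runs in the direction (SC1)(iii) permits and yields no contradiction. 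As written, your plan for (U'\textsubscript{I}) has no working endgame, whereas the paper regards this axiom as an immediate consequence of the separation axiom (SC3).
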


(See \cite[p.8]{Weil1937espaces} for the original set of axioms (U') and equivalent ones; we use a slight simplification of (U'\textsubscript{III}) in view of the fact that the $E_n$ are stable under $(x,y) \mapsto (y,x)$.)

\begin{proof}
(U'\textsubscript{I}) follows from (SC3) and (U'\textsubscript{II}) from the definition, setting $p = n \vee m$.
To check  (U'\textsubscript{III}), letting $n \in \mathbf Z$  one needs to find $m \in \Z$ such that \eqref{eq:goal-for-prop-uniform} holds.  This can be rephrased as follows:
for any pair of distinct $x,y \in Z$, set
\begin{equation*}
    \varrho(x,y) = \exp \left( - \inf \left\{ {\delta}(b) : b \in \beta , \lbrace x \rbrace \cup \lbrace y \rbrace \subset b \right\} \right) 
\end{equation*}
and $\varrho(x,x) = 0$. 
Especially $(x,y) \in E_n \iff - \log \varrho(x,y) \geqslant n$.
Then for all $x,y,z \in Z^3$
\begin{equation}
    \tag{$\triangle_{O(u)}$}
    \label{ineq:triangle-u}
    \varrho(x,z) \leqslant e^{ v \left( \inf\left\{ - \log \varrho(x,y), - \log \varrho(y,z) \right\} \right) } \left[ \varrho(x,y) \vee \varrho(y,z) \right]
\end{equation}
where $v =O(u)$ (one may take $v(n) = q(n)$ at least for $n \geqslant n_0$). Set $m_0 = 2n \vee 2 \sup \left\{ m' \in \Z_{\geqslant 0} : v(m') \geqslant \frac{m'}{2} \right\}$. Then for every $m \geqslant m_0$, $m - v(m) \geqslant m - m/2 = m/2 \geqslant n$. \eqref{eq:goal-for-prop-uniform} is achieved.
\end{proof}

If $(Z, \beta)$ is as above, the topology is defined on $Z$ by declaring $\Omega \subset Z$ open if for every $x \in \Omega$ there is $n\in \mathbf Z$ such that for all $y\in Z$, $(x,y) \in E_n$ implies $y \in \Omega$.

\begin{remark}
\label{rem:topology}
An open subspace $\Omega$ of a $O(u)$-quasisymmetric structure $(Z,\beta)$ inherits a $O(u)$-quasisymmetric structure $(\Omega,\beta_{\mid \Omega})$ where \[\beta_{\mid \Omega}=\left\{ b \in \beta : \forall k \in \mathbf{Z}_{\geqslant 0}, \widehat{k.b} \cap \Omega \neq \emptyset \right\},\]
the shift is restricted to $\beta_{\mid \Omega}$, and the realization is $\widehat{b}_{\mid \Omega}=\widehat{b}\cap\Omega$.
\end{remark}

\subsubsection[Hyperbolic cones]{Hyperbolic cones and sublinear large-scale geometry}
\label{subsec:Gromov-hyp}

The boundary of a Gromov-hyperbolic space has a Margulis structure, see e.g.\ \cite{GroPanRigid}; further, the boundary construction can be reversed as suggested by M.Gromov \cite[1.8.A(b)]{Gromov87HypGrp} and elaborated by M. Bonk and O. Schramm (\cite[\S\ 7]{BonkSchrammEmbeddings}, see also \cite{PaulinGrpHypBord}), so that in the current formalism any $O(1)$-quasisymmetric structure occurs at the boundary of a Gromov-hyperbolic space\footnote{Namely a certain quotient space of $\beta$, two abstract balls being close if close for ${\delta}$ and if their realizations intersect, compare e.g. \cite[chapter 2]{RoeLectures}. 
Abstract, resp.\ concrete balls are turned into geodesic segments, resp.\ their endpoints.  
The metric hyperbolicity is implied by (SC1) and (SC2).}.
It is a classical fact that quasiisometries between Gromov hyperbolic groups extend to biH{\"o}lder, quasisymmetric homeomorphism between their boundaries, i.e. they do so in a way that preserves the features of the $O(1)$-quasisymmetric structure. 
This paper is rather concerned with sublinearly biLipschitz maps, for which we recall the definition:

\begin{definition}[Cornulier, {\cite{cornulier2017sublinear}}]
\label{dfn:SBE}
Let $(Y,o)$ and $(Y',o')$ be metric spaces. A $O(u)$-sublinearly biLipschitz equivalence (SBE) is a map $f: Y \to Y'$ for which there exists $\lambda \in \R_{\geqslant 1}$ and $v = O(u)$ such that
\begin{enumerate}
    \item 
    $\forall y_1, y_2 \in Y, \frac{1}{\lambda} d(y_1, y_2) - v(\sup \lbrace d(o,y_1), d(o,y_2) \rbrace) \leqslant d(f(y_1), f(y_2))$
    \item 
    $\forall y_1, y_2 \in Y, \lambda d(y_1, y_2) + v(\sup \lbrace d(o,y_1), d(o,y_2) \rbrace) \geqslant d(f(y_1), f(y_2))$
    \item
    $\forall y' \in Y', \exists y \in Y, d(y', f(y)) \leqslant v(d(y,o))$.
\end{enumerate}
\end{definition}

Unlike quasiisometries (which are the $O(u)$-SBE with $u=1$), SBEs are not coarse equivalences in general. 
However they do preserve certain coarse sublinear structures in the sense of Dranishnikov and Smith \cite[2]{DranishnikovSmith}, or large-scale sublinear structures in the sense of Dydak and Hoffland \cite[p.1014]{DydakHoffland}. $O(u)$-quasisymmetric structures are boundary analogs of the former, in a more specific way where $u$ is explicit.
In all our applications $Y$ and $Y'$ will be Gromov-hyperbolic, proper geodesic metric spaces.
Boundary maps of sublinearly biLipschitz equivalences are still homeomorphisms, however a notion more general than quasiconformality needs to be defined.

\subsection{{$O(u)$}-quasisymmetric homeomorphisms}
\label{subsec:sublinear-quasiconformality}

\subsubsection[Definition, comparison with q.s. mappings]{Definition and comparison with quasisymmetric mappings}
\label{def-of-dotplus}

Denote by ${\mathcal{O}^+(u)}$ the semigroup of germs of functions $v$ valued in $\Z_{\geqslant 0}$, defined on large enough integers, such that $v = O(u)$, with the composition law $\dotplus$ defined as 
\begin{equation}
    \label{eq:compostion-law}
    (v_1 \dotplus v_2 )(n) = v_2 (n) + v_1 (n - v_2(n))
\end{equation}
for $n \in \Z$ large enough. 
The reason for this composition law is the requirement that $(\operatorname{Id}-v_1) \circ (\operatorname{Id}-v_2)=\operatorname{Id}-(v_1 \dotplus v_2)$.
$\Z_{\geqslant 0}$ embeds in $\mathcal{O}^+{(u)}$ as the commutative subsemigroup\footnote{The noncommutativity of $\dotplus$ should not be a concern; one can check that for some $C\geqslant 1$, $ C^{-1}(v_1 +v_2)(n)\leqslant (v_1\dotplus v_2)(n) \leqslant C(v_1 +v_2)(n)$ for $n$ large enough.} of constant functions.
$\mathcal{O}^+(u)$ acts on small enough abstract balls: for every $v$ in $\mathcal{O}^+(u)$ there exists $n_0 \in Z$ such that $\Z_{\geqslant n_0}$ lies in the domain of $v$ and if ${\delta}(b) \geqslant n_0$ then $v.b$ is defined as
$v.b = v({\delta}(b)).b$.

\begin{definition}[round sets and rings, compare {\cite[3.4]{TysonQuasiconfQuasisymmm}}]
\label{dfn:round-sets-rings}
    Let $\beta \to \mathcal{P}(Z)$ be a $O(u)$-quasisymmetric structure.
    Given $k \in \mathcal{O}^+(u)$ and $n \in \Z$, a subset $a \in \mathcal{P}(X)$ is a $(k,n)$-round set (or simply a $k$-round set) if there exists $b \in \beta$ such that ${\delta}(b) \geqslant n$ and $\widehat b \subseteq a \subseteq \widehat{k.b}$. 
    A couple of subsets $(a^-,a^+) \in \mathcal{P}(X)^2$ is a $(k,n)$-ring if there exists $b \in \beta$ such that ${\delta}(b) \geqslant n$ and $\widehat b \subseteq a^- \subseteq a^+ \subseteq \widehat{k.b}$.
    Denote by $\mathscr{B}^k_n(\beta)$ resp.\ $\mathscr{R}^k_n(\beta)$ the collection of $(k,n)$-round sets, resp.\ of $(k,n)$-rings, and $\mathscr{B}^k(\beta)$ resp.\ $\mathscr{R}^k(\beta)$ their union over $n \in \mathrm{domain}(k)$.
\end{definition}

\begin{definition}[outer rings]
    \label{dfn:outer-ring}
    Let $\beta \to \mathcal{P}(Z)$ be a $O(u)$-quasisymmetric structure.
    Given $j \in \mathcal{O}^+(u)$, a pair of subsets $(a^-,a^+) \in \mathcal{P}(X)^2$ is a $(j,n)$-outer ring if there exists $n \in \Z$ and $b \in \beta$ such that $a^- \subseteq \widehat{b} \subseteq \widehat{j.b} \subseteq a^+$ and $\delta(b) \geqslant n$. 
    Denote by $\mathscr{O}_{j;n}(\beta)$ the collection of $(j,n)$ outer rings.
\end{definition}

The reader may think of $k$ as a parameter of asphericity\footnote{We borrow the term ``asphericity'' from the survey \cite[p.88]{GroPanRigid}. 
Another choice is ``modulus'' adopted in \cite{pallier2018large} but it would be misleading here since for our purposes in section \ref{sec:conformal-invariants}, moduli are global rather than infinitesimal conformal invariants. Still another term is ``eccentricity''. We prefer not to define asphericity for subsets since we would face the same issues as with radii and centers.} (akin to $\log t$ in \eqref{eq:definition-quasisym-homeo}) that depends on the scale. Whereas quasisymmetric mappings preserve bounded asphericities, ${O(u)}$-quasisymmetric homeomorphisms will be asked to preserve asphericities within the $O(u)$ class. 
We define them in two steps.

\begin{definition}[Equivalent $O(u)$-quasisymmetric structures]
\label{dfn:equivalence}
Let $\beta$ and $\beta'$ be two $O(u)$-quasisymmetric structures on a set $Z$. $\beta'$ is finer than $\beta$ if there exists $\lambda \in \R_{> 0}$ and $n_0 \in \Z$ such that 
\begin{equation}
    \forall k \in \mathcal{O}^+(u),\,  \exists k' \in \mathcal{O}^+(u) : \forall n \in \Z_{\geqslant n_0},\, \mathscr{R}_n^k(\beta) \subseteq \mathscr{R}_{\lfloor \lambda n \rfloor}^{k'}(\beta') 
    \label{eq:finer-conformal-structure}
\end{equation}
\begin{equation}
    \forall j' \in \mathcal{O}^+(u),\,  \exists j \in \mathcal{O}^+(u) : \forall n \in \Z_{\geqslant n_0},\, \mathscr{O}_{j;n}(\beta) \subseteq \mathscr{O}_{j';\lfloor \lambda n \rfloor}(\beta') 
    \label{eq:finer-conformal-structure-outer-rings}
\end{equation}

$\beta$ and $\beta'$ are said equivalent if both finer than each other. 
Up to taking logarithms $k'$ plays with respect to $k$ in \eqref{eq:finer-conformal-structure} the r{\^o}le of $\eta(t)$ with respect to $t$ in \eqref{eq:definition-quasisym-homeo}, so that we will still denote $\eta : \mathcal{O}^+(u) \to \mathcal{O}^+(u)$ a map such that one may take $k' = \eta(k)$ in \eqref{eq:finer-conformal-structure}. 
Similarly, denote $\overline{\eta}:\mathcal{O}^+(u) \to \mathcal{O}^+(u)$ a map such that one may take $j = \overline{\eta}(j')$ in \eqref{eq:finer-conformal-structure-outer-rings}.
$\lambda$ is analogous to a H{\"o}lder exponent comparing snowflake-equivalent metrics.
\end{definition}

\begin{definition}[$O(u)$-quasisymmetric homeomorphism]
\label{dfn:sublin-conf-homeo}
Let $\varphi : Z \to Z'$ be a bijection between two sets endowed with $O(u)$-quasisymmetric structures $\beta$ and $\beta'$. 
One can pull-back $\beta'$ to $Z$ by means of $\varphi$. The map $\varphi$ is a $O(u)$-quasisymmetric homeomorphism if $\beta$ and $\varphi^\ast \beta'$ are $O(u)$-equivalent.
\end{definition}

Two $O(u)$-equivalent structures on $Z$ define the same uniform structure on $Z$ so that ${O(u)}$-conformal homeomorphisms are uniform homeomorphisms. This can be made more quantitative: they are biH{\"o}lder continuous when this makes sense {\cite[4.4]{pallier2018large}}. 
Not every quasi-symmetric homeomorphism is $O(1)$-quasisymmetric, but every power-quasisymmetric homeomorphisms\footnote{A power quasisymmetric embedding is an embedding for which one can take $\eta(t) = \sup\lbrace t^\alpha, t^{1/\alpha} \rbrace$ for some $\alpha \in (0,+\infty)$ in \eqref{eq:definition-quasisym-homeo}; this is not restrictive between uniformly perfect metric spaces (called ``homogeneously dense'' by Tukia and V{\"a}is{\"a}l{\"a}) \cite[3.12]{TukiaVaisalaQSEmbed} \cite[11.3]{HeinonenLectures}.} is.
Note that a consequence of Definition \ref{dfn:sublin-conf-homeo} is that
\begin{equation*}
    \label{eq:weak-finer-conformal}
    \forall k \in \mathcal{O}^+(u),\,  \exists k' \in \mathcal{O}^+(u), \mathscr{B}^k(\beta) \subseteq \mathscr{B}^{k'}(\varphi^\ast \beta')
\end{equation*}
since $k$-round sets may be identified with the $k$-annuli $(a^-, a^+)$ for which there is equality $a^- = a^+$. 
This does not suffice for all our needs, nevertheless it is simpler and we shall use it when possible. 

\begin{remark}
A reformulation of \eqref{eq:finer-conformal-structure} and \eqref{eq:finer-conformal-structure-outer-rings} is
\begin{equation*}
    \forall K \in [1,+\infty), \, \exists K' \in [1,+\infty),\, \mathscr{R}_n^{\lceil Ku \rceil}(\beta) \subseteq \mathscr{R}_{\lfloor \lambda n \rfloor}^{\lfloor K'u \rfloor}(\beta').
\end{equation*}
\begin{equation*}
    \forall J' \in [1,+\infty), \, \exists J \in [1,+\infty),\, \mathscr{O}_n^{\lfloor Ju \rfloor}(\beta) \subseteq \mathscr{O}_{\lfloor \lambda n \rfloor}^{\lceil J'u \rceil}(\beta').
\end{equation*}
\end{remark}

\begin{remark}
The requirement \eqref{eq:finer-conformal-structure-outer-rings} will be needed only when we deal with packings.
\end{remark}

\subsubsection[{$O(u)$}-q.s. homeomorphisms as boundary mappings]{{$O(u)$}-quasisymmetric homeomorphisms as boundary mappings}

{If $Y$ is a proper geodesic Gromov-hyperbolic space, we call visual kernel on the Gromov boundary $Z = \partial_\infty Y$ a function $\rho : Z \times Z \to \R_{\geqslant 0}$ such that $\rho(\xi,\eta) = \exp - (\xi, \eta)_o$ for $\xi, \eta \in Z$, where $(\xi, \eta)_o$ denotes the Gromov product of $\xi$ and $\eta$ seen from $o\in Y$ (this is $\sup \liminf_{i,j} (\xi_i, \eta_j)_o$ for over all sequences $\xi_i \to \xi$, $\eta_j \to \eta$)}.

\begin{theorem}
\label{thm:pallier-SBE2mobius}
Let $Y$ and $Y'$ be Gromov-hyperbolic, geodesic, proper metric spaces with uniformly perfect Gromov boundaries $Z$ and $Z'$.
Let $f : Y \to Y'$ be a $O(u)$-sublinearly biLipschitz equivalence (Definition \ref{dfn:SBE}). 
Let $\beta$ and $\beta'$ be the $O(u)$-quasisymmetric structures on the Gromov boundaries of $Z$ and $Z'$ associated to visual kernels. 
Then $f$ induces a map $\partial_\infty f : Z \to Z'$ between Gromov boundaries is a $O(u)$-quasisymmetric homeomorphism.
\end{theorem}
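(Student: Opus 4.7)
My approach is to deduce Theorem \ref{thm:pallier-SBE2mobius} from the Gromov-product distortion estimate for $O(u)$-SBEs established in \cite[Theorem 1]{pallier2018large}, which I would invoke in the following form: if $f$ is a $(\lambda_0, v)$-SBE with $v = O(u)$, then the boundary extension $\partial_\infty f$ exists as a homeomorphism and there is $w = O(u)$ such that
\[
\bigl| (\partial_\infty f(\xi), \partial_\infty f(\eta))_{o'} - \tfrac{1}{\lambda_0}(\xi, \eta)_o \bigr| \leq w\bigl((\xi,\eta)_o\bigr)
\]
for $\xi \neq \eta$ in $Z$ with $(\xi,\eta)_o$ sufficiently large (the basepoint change $f(o) \leadsto o'$ only contributes a bounded additive constant, absorbed in $w$).

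I would then unfold Definition \ref{dfn:sublin-conf-struct} applied to the visual kernel $\rho(\xi,\eta) = e^{-(\xi,\eta)_o}$: abstract balls are pairs $(\xi_0, n) \in Z \times \Z$ realized as $B(\xi_0, n) := \{\eta : (\xi_0, \eta)_o \geq n\}$, with $\delta(\xi_0,n) = n$ and $k.(\xi_0, n) = (\xi_0, n-k)$. In these terms, a $(k,n)$-ring $(a^-, a^+)$ is equivalent to the existence of a centre $\xi_0$ and integer $n$ satisfying $B(\xi_0, n) \subseteq a^- \subseteq a^+ \subseteq B(\xi_0, n - k(n))$, and analogously for outer rings with the dual chain of inclusions. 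The same description applies to $\beta'$ via the visual kernel at $o'$.

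The core computation is the following: applying the distortion bound in both directions yields
\[
B'(\partial_\infty f(\xi_0), \lceil s/\lambda_0 + w'(s) \rceil) \subseteq \partial_\infty f(B(\xi_0, s)) \subseteq B'(\partial_\infty f(\xi_0), \lfloor s/\lambda_0 - w(s) \rfloor)
\]
for a second function $w' = O(u)$ obtained by inverting the estimate in $(\xi_0, \eta)_o$ using strict sublinearity of $u$. Applied with $s=n$ and $s=n-k(n)$, this shows that $(\partial_\infty f(a^-), \partial_\infty f(a^+))$ is a $(k',n')$-ring in $\beta'$ with $n' = \lceil n/\lambda_0 + w'(n)\rceil$ and $k'(n') \leq (k(n) + w(n) + w'(n-k(n)))/\lambda_0$; doubling of $u$ gives $u(n) \asymp u(n')$, hence $k' \in \mathcal{O}^+(u)$. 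Setting the H\"older-type exponent $\lambda := 1/\lambda_0$ and absorbing the sublinear slack verifies \eqref{eq:finer-conformal-structure}. The outer-ring case \eqref{eq:finer-conformal-structure-outer-rings} is handled symmetrically with the two inclusions swapped: given $j' \in \mathcal{O}^+(u)$, the choice $j(n) := C\lambda_0 u(n)$ works for a sufficiently large constant $C$.

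Finally, applying the same computation to a coarse inverse of $f$—which is itself a $O(u)$-SBE by \cite{cornulier2017sublinear}—gives the reverse inclusions of ring and outer-ring collections, hence the $O(u)$-equivalence of $\beta$ and $\varphi^\ast \beta'$ required by Definition \ref{dfn:sublin-conf-homeo}. I expect the main technical obstacle to be the bookkeeping of multiple $O(u)$-error terms arising from the inversion of the distortion inequality, the change of basepoint, and the composition with functional asphericity parameters $k, j \in \mathcal{O}^+(u)$ through the factor $1/\lambda_0$; both the doubling and the strict sublinearity of $u$ are essential to ensure that every such composite remains in $\mathcal{O}^+(u)$ after the change of $\delta$-index from $n$ to $n' \sim n/\lambda_0$.
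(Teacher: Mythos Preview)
The Gromov-product distortion estimate you invoke is too strong and is not what \cite[Theorem 1]{pallier2018large} provides. For a $(\lambda_0,v)$-SBE the correct control is two-sided biH{\"o}lder,
\[
\tfrac{1}{\lambda_0}(\xi,\eta)_o - w\bigl((\xi,\eta)_o\bigr)\ \leqslant\ (\varphi(\xi),\varphi(\eta))_{o'}\ \leqslant\ \lambda_0(\xi,\eta)_o + w\bigl((\xi,\eta)_o\bigr),
\]
with the multiplicative factors $1/\lambda_0$ and $\lambda_0$ on the two sides, \emph{not} the single factor $1/\lambda_0$ you wrote. (Already for quasi-isometries of $\mathbb{H}^2$ the boundary map need not be a snowflake map, so an estimate $\bigl|(\varphi(\xi),\varphi(\eta))_{o'}-\tfrac{1}{\lambda_0}(\xi,\eta)_o\bigr|\leqslant C$ is false.) With the correct biH{\"o}lder bound your ball-inclusion step becomes
\[
B'\!\bigl(\varphi(\xi_0),\ \lambda_0 s + w'(s)\bigr)\ \subseteq\ \varphi\bigl(B(\xi_0,s)\bigr)\ \subseteq\ B'\!\bigl(\varphi(\xi_0),\ s/\lambda_0 - w(s)\bigr),
\]
so the image of a single concrete ball only sits in a ring of asphericity $(\lambda_0-1/\lambda_0)\,s + O(u(s))$, which is \emph{linear} in $s$. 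Hence your derivation of $k'\in\mathcal O^+(u)$ collapses, and the argument does not establish \eqref{eq:finer-conformal-structure}.

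This is exactly why the paper does not argue via absolute radii. It uses \cite[Proposition 4.9]{pallier2018large}, which controls the \emph{asphericity} of the image ring directly: a ring of asphericity $\log(R/r)$ is sent to a ring of asphericity $\log(R/r)+O(u(-\log r))$, while inner radii only obey a H{\"o}lder bound $r^{1/\gamma}$. For \eqref{eq:finer-conformal-structure-outer-rings} the paper again avoids absolute control and instead uses the sublinear quasi-M{\"o}bius property of $\varphi^{-1}$, bounding the ratio $d(\zeta,\xi_1)/d(\zeta,\xi_0)$ by a cross-ratio inequality; this three-point argument is where uniform perfectness is needed (to guarantee reference points $\xi_0$ realizing $r'$), a hypothesis your proposal never invokes. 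In short, the missing idea is that one must track relative (cross-ratio/asphericity) quantities rather than absolute Gromov products; the latter are distorted linearly by the biLipschitz constant and cannot by themselves yield $O(u)$ control on rings.
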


Since the original statement is not this one, we give details on how to deduce it from \cite{pallier2018large}. 

\begin{figure}
    \begin{tikzpicture}[line cap=round,line join=round,>=angle 90,x=0.5cm,y=0.5cm]
    \clip(-5,-5) rectangle (19,5);
    \fill[fill=blue,fill opacity=0.12] (-4,4) -- (-4,-4) -- (4,-4) -- (4,4) -- cycle;
    \fill[fill=blue,fill opacity=0.15] (-2.936,1.47) -- (0.,-1.47) -- (2.936,-1.47) -- (0.,1.47) -- cycle;
    \path[fill=blue,fill opacity=0.2] (-1.617,0.538) -- (0.543,-0.538) -- (1.617,-0.538) -- (-0.543,0.538) -- cycle;
    \draw [->, line width = 0.2pt] (0,-5) -- (0,5);
    \draw [->, line width = 0.2pt] (-5,0) -- (5,0);
    \draw [line width = 0.1pt, dash pattern =on 3pt off 2pt] (-4,4) --(4,4) -- (4,-4) -- (-4,-4) -- (-4,4) ;
    \draw [line width = 0.1pt, dash pattern =on 3pt off 2pt] (-1.47,1.47) --(1.47,1.47) -- (1.47,-1.47) -- (-1.47,-1.47) -- (-1.47,1.47) ;
    \draw [line width = 0.1pt, dash pattern =on 3pt off 2pt] (-0.54,0.54) --(0.54,0.54) -- (0.54,-0.54) -- (-0.54,-0.54) -- (-0.54,0.54)   ;
    \draw [shift={(12,0)}, line width = 0.1pt, dash pattern =on 3pt off 2pt] (-1.62,1.62) --(1.62,1.62) -- (1.62,-1.62) -- (-1.62,-1.62) -- (-1.62,1.62) ;
    \draw [opacity = 0.5] (-0.54,-0.54) --(10.38,-1.62);
    \draw [opacity = 0.5] (-0.54,0.54) --(10.38,1.62);
    \draw [opacity = 0.0] (0.54,0.54) --(13.62,1.62);
    \draw [opacity = 0.0] (0.54,-0.54) --(13.62,-1.62);
    \begin{scriptsize}
    \draw (0.6,4.3) node {$1$};
    \draw (0.8,1.75) node {$\small{e^{-1}}$};
    \draw (-0.8,0.85) node {$\small{e^{-2}}$};
    \end{scriptsize}
    \draw (-0.2,4) -- (0.2,4);
    \draw (-0.2,1.47) -- (0.2,1.47);
    \draw (-0.2,0.54) -- (0.2,0.54);
    \draw [->, line width = 0.2pt] (12,-5) -- (12,5);
    \draw [->, line width = 0.2pt] (7,0) -- (17,0);
    \fill [scale=1.5, shift={(8,0)},fill=blue,fill opacity=0.25] (-3.24,1.08) -- (-1.08,1.08) -- (3.24,-1.08) -- (1.08,-1.08) -- cycle;
    \draw [>-<, line width = 0.2pt] (6,1.7) -- (15,-3);
    \draw [shift={(-5,3)},>-<, line width = 0.2pt] (11.2,-0.8) -- (12.8,1);
    \draw (8.2,4.5)  node {$\sim s$};
    \draw (16.9,-3)  node {$\sim s \vert \log s \vert$};
    \draw (14.7,-1.3) [color =blue, opacity = 0.7] node {$B(s)$};
    \draw (2.8,-3.3) [color =blue, opacity = 0.6] node {$B(1)$};
    \draw (11.8,1.62) -- (12.2,1.62);
    \draw (12.5,1.9) node{$s$};
\end{tikzpicture}
    \caption[The plane and the tilted plane]{Concentric balls of a quasidistance on $\R^2$ that is invariant under translation and dilation by $\exp(t\alpha')$ with unipotent, non identity $\alpha'$, and coincides with the $\ell^\infty$ distance for pairs of points at distance $1$. For comparison, dashed $\ell^\infty$ spheres of equal radii. Compare Figure \ref{fig:example-illustr-thm-diag}.}
    \label{fig:example-illustr-thm}
\end{figure}
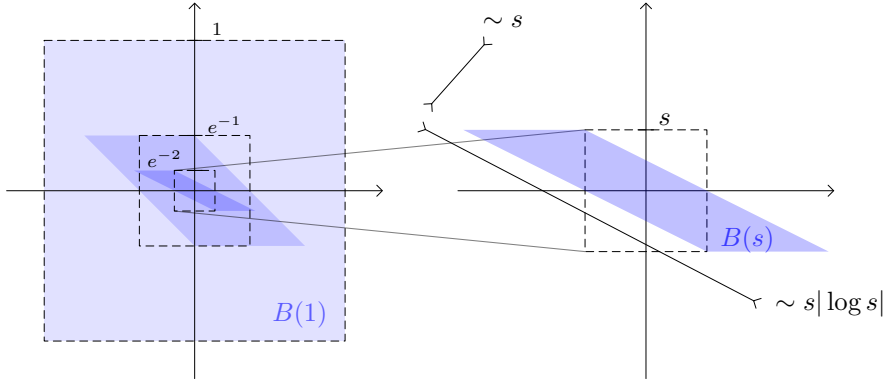

\begin{proof}[How to deduce Theorem {\ref{thm:pallier-SBE2mobius}} from \cite{pallier2018large}]

Fix visual kernels $d$ on $Z$ and $Z'$, start assuming for simplicity that every metric sphere of positive radius in $Z$ and $Z'$ has at least one point, denote $\varphi = \partial_\infty f$; then $\varphi$ and $\varphi^{-1}$ are biH{\"o}lder \cite{cornulier2017sublinear}; up to snowflaking $Z$ or $Z'$ let $\gamma\in(0,1)$ be a H{\"o}lder exponent for both.
By\footnote{Beware that one must translate ``annulus'' into ``ring'' and ``modulus'' into ``asphericity'' to conform to our current terminology.}  \cite[Proposition 4.9]{pallier2018large} sufficiently small rings of inner radius $r$ and asphericity $\log(R/r)$ are sent by $\varphi$ to rings with asphericity $\log R/r + O(u(-\log r))$ and inner radii greater than $r^{1/\gamma}$; this implies \eqref{eq:finer-conformal-structure} translating $-\log r$ into $n$, $\log R/r$ into $k$ and noting that $u(\gamma n)=O(u(n))$ since $u$ is doubling.
Let us prove \eqref{eq:finer-conformal-structure-outer-rings}. Fix $\ell' \in O(u)$ a positive function.
We need $\ell$ such that if $A$ contains a $\ell'$-outer ring then $f(A)$ will contain a $\ell$-outer ring.
Fix $\zeta \in Z$ and $r>0$. Let $r'=\sup \left\{ d(\varphi(\zeta), \varphi(\xi)):d(\zeta,\xi)\leqslant r \right\}$. Let $\xi_0 \in Z$ be such that $d(\varphi(\xi_0),\varphi(\zeta))=r'$. Let $\xi_1 \in Z $ be such that $\varphi(\xi_1) \in B(\varphi(\zeta), r'\exp(\ell'(-\log r'))$. By quasiM{\"o}biusness of $\varphi^{-1}$, there exists $\lambda \in \mathbf{R}_{>0}$ and $v\in O(u)$ a positive function such that
\begin{align*}
    \log^+ \frac{d(\zeta,\xi_1)}{d(\zeta,\xi_0)} & \geqslant \lambda \log^+ \frac{d(\varphi(\zeta),\varphi(\xi_1))}{d(\varphi(\zeta),\varphi(\xi_0))} \\
    & -v(-\log \inf \left\{ d(\vert \varphi(\zeta),\varphi(\xi_0)), d(\vert \varphi(\zeta),\varphi(\xi_1)) \right\}) \\
    & \geqslant \ell'(\lfloor - \log r'\rfloor) -v(-\log r').
\end{align*}
Setting $\ell(n)=\ell'(n/\gamma)+v(n/\gamma)$ this proves \eqref{eq:finer-conformal-structure-outer-rings} for the quasisymmetric structure $\beta$ and the pullback $\varphi^\ast \beta'$ on $Z$. Finally, uniform perfectness of $Z$ and $Z'$ allows to carry the proof up to bounded approximations should certain points not exist.
\end{proof}

\begin{example}[The plane and the twisted plane]
\label{exm:twisted-plane}
Let $Y = \R^2 \rtimes_{\alpha} \R$ and $Y'=\R^2 \rtimes_{\alpha'} \R$ where 
\[ \alpha = \begin{pmatrix} 1 & 0 \\ 0 & 1 \end{pmatrix} \; \text{and} \; \alpha' = \begin{pmatrix} 1 & 1 \\ 0 & 1 \end{pmatrix} \]
and the semi-direct products are formed with $t\in \R$ acting on $\R^2$ as $e^{t\alpha}$ and $e^{t\alpha'}$ respectively.
Equip $Y$ and $Y'$ with left invariant metrics; they are Gromov-hyperbolic and $-t$ is a Busemann function. Identify both Gromov boundaries $\partial_\infty^\ast Y =\partial_\infty Y \setminus [-t]$ and $\partial_\infty^\ast Y'=\partial_\infty Y' \setminus [-t]$ to $\R^2$, {and equip them with the quasisymmetric structures associated to quasidistances $\rho$ and $\rho'$ such that $\rho(e^{t\alpha} \xi_1, e^{t\alpha} \xi_2) = e^t \rho(\xi_1, \xi_2)$ for all $\xi_1, \xi_2 \in \partial_\infty Y$.}
The map $\iota : Y \to Y'$ which is the identity in coordinates is a $O(\log)$-sublinearly biLipschitz equivalence \cite{CornulierCones11}. On $\partial_\infty Y$ and $\partial_\infty Y$ The identity map $\partial_\infty^\ast \iota$ of $\R^2$ is a $O(\log)$-quasisymmetric homeomorphism, as  Figure \ref{fig:example-illustr-thm}.
\end{example}

\subsection{Covering and measures}
\subsubsection{Covering lemma: extracting disjoint balls}

Let $(Z, \beta)$ be a $O(u)$-quasisymmetric structure (Definition \ref{dfn:sublin-conf-struct}) and let $A \in \mathcal{P}(Z)$ be a subset. 
Say that a countable collection of abstract balls $\mathcal B$ is a covering of $A$ if the realizations of the members of $\mathcal B$ cover $A$.
We adapt a classical covering lemma for metric spaces \cite[2.8.4 -- 2.8.8]{FedererGMT}, \cite[p.24]{MattilaSetsMeas}\footnote{We cite both since Federer's statement is more general, but the filtration of balls according to the logarithms of their radii is noticeable in Mattila's proof.} to $O(u)$-quasisymmetric structures; (SC2) may be considered the case with $2$ balls.
The lemma says that out of any covering $\mathcal{B}$ one can extract a disjoint subcovering $\mathcal{C}$ such that $q.\mathcal{C} = \lbrace q.b : b \in \mathcal{B} \rbrace$ is still a covering, where $q$ is a positive function in the $O(u)$-class; for metric spaces it is known as the `` $5r$ covering lemma '' since one can take $5$ as an exponential analog of $q$.

\begin{lemma}
\label{lem:covering-lemma}
Let $(Z, \beta, q, \delta)$ be a $O(u)$-quasisymmetric structure (Definition \ref{dfn:sublin-conf-struct}). Let $A \in \mathcal P (Z)$ be a subset and let $\mathcal{B} \subseteq \beta$ be a countable covering of $A$; assume that $\inf_{\mathcal B} {\delta} > - \infty$.
There exists $\mathcal{C} \subset \mathcal{B}$ such that $q.\mathcal{C}$ covers $A$ and for every $b,b' \in \mathcal{C}$, $\widehat{b} \cap \widehat{b'} = \emptyset$ unless $b =b'$.
\end{lemma}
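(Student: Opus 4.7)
The plan is to run the standard greedy selection from the classical $5r$-covering lemma, with the role of ``radius'' played by $e^{-\delta(\cdot)}$ and axiom (SC2) playing the role of the metric $5r$-inclusion. Concretely, larger $\delta$ means smaller ball by (SC0)--(SC1), and since $\mathcal{B}$ is countable with $\inf_{\mathcal{B}} \delta > -\infty$, I can stratify $\mathcal{B}$ according to $\delta$ and select inside each stratum, giving preference to balls of smaller $\delta$ (the ``larger'' ones).

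First I would set $m_0 := \inf_{\mathcal{B}} \delta$, which is an integer by (SC0), and without loss of generality assume $m_0 \geqslant n_0$ (the finitely many $\delta$-levels below $n_0$ do not interfere with the selection, and one may absorb them by increasing the constant for $q$ outside its initial domain). Partition $\mathcal{B} = \bigsqcup_{j \geqslant 0} \mathcal{B}_j$ where $\mathcal{B}_j = \{b \in \mathcal{B} : \delta(b) = m_0 + j\}$. Then inductively build $\mathcal{C}_j \subseteq \mathcal{B}_j$: setting $\mathcal{D}_j := \bigcup_{i<j} \mathcal{C}_i$, let $\mathcal{C}_j$ be a subcollection of $\{b \in \mathcal{B}_j : \widehat{b} \cap \widehat{c} = \emptyset \text{ for every } c \in \mathcal{D}_j\}$ which is maximal among its pairwise disjoint subcollections; existence follows either from Zorn's lemma or, since $\mathcal{B}_j$ is countable, from a plain greedy enumeration. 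Put $\mathcal{C} := \bigcup_j \mathcal{C}_j$.

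Disjointness of $\mathcal{C}$ is immediate: two distinct $b \in \mathcal{C}_j$, $b' \in \mathcal{C}_{j'}$ with $j \leqslant j'$ are disjoint either by the maximality of $\mathcal{C}_j$ as a disjoint family (when $j=j'$) or by the defining property of $\mathcal{C}_{j'}$ (when $j<j'$). For the covering property, let $b \in \mathcal{B}_j$; by the maximality used in selecting $\mathcal{C}_j$, there exists $c \in \mathcal{C}_i$ with $i \leqslant j$ such that $\widehat{b} \cap \widehat{c} \neq \emptyset$ (taking $c=b$ if $b \in \mathcal{C}_j$). Since $\delta(c) = m_0+i \leqslant m_0+j = \delta(b)$ and $\delta(c) \geqslant n_0$, axiom (SC2) gives $\widehat{b} \subseteq \widehat{q(\delta(c)).c} = \widehat{q.c}$, so $\widehat{b}$ is covered by an element of $q.\mathcal{C}$.

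The argument is genuinely routine once the ``inverse radius'' reading of $\delta$ and the metric analogy for (SC2) are kept in mind; the only mild obstacle is bookkeeping the lower cutoff $n_0$ so that $q.c$ is defined for every selected $c$, which is dealt with by the initial reduction $m_0 \geqslant n_0$.
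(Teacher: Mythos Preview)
Your argument is correct and essentially identical to the paper's: both stratify $\mathcal{B}$ by the value of $\delta$, greedily select a maximal disjoint subfamily at each level (preferring smaller $\delta$, i.e.\ larger balls), and invoke (SC2) to show the $q$-dilates of the selected balls swallow every member of $\mathcal{B}$. The paper phrases the covering check for points $x\in A$ rather than for balls $b\in\mathcal{B}$, but this is the same thing since $\mathcal{B}$ covers $A$. Your remark on the cutoff $n_0$ is in fact slightly more careful than the paper, which silently reuses the symbol $n_0$ for $\inf_{\mathcal{B}}\delta$ without commenting on the domain of $q$.
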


\begin{proof}
Set $n_0 = \inf_{b \in \mathcal{B}} {\delta}(b)$. 
For every $n \in \Z$, let $\mathcal B_n = \left\{ b \in \mathcal B : {\delta}(b) = n \right\}$.
By induction on $n \in \Z_{\geqslant n_0}$, choose for each $n$ (by Zorn's lemma or Hausdorff's maximality principle, see \cite[0.24]{KelleyGenTop}) a maximal subfamily $\mathcal{C}_n \subset \mathcal{F}_n$ whose realizations are pairwise disjoint and do not intersect the previously chosen balls, that is: 
\begin{itemize}
    \item 
    $\forall (b, b') \in \mathcal{C}_n \times \mathcal{C}_n,\, \widehat{b} \cap \widehat{b'} \neq \emptyset \implies b = b'.$
    \item 
    $\forall b \in \mathcal{C}_n, \,\forall m \in \left\{ n_0, \ldots n-1 \right\},\, \forall b' \in \mathcal{C}_{m},\, \widehat{b} \cap \widehat{b'} = \emptyset$.
    \item 
    $\forall b \in \mathcal{B}_n \setminus \mathcal C_n, \exists b' \in \mathcal{C}_n : \widehat b \cap  \widehat {b'} \neq \emptyset$.
\end{itemize}
By construction, the realizations of members of $\mathcal{C} = \cup_n \mathcal C_n$ are disjoint. 
Let $x \in A$; since $\mathcal B$ covers $A$ there is $b' \in \mathcal B$ such that $\widehat b' \ni x$. Either $b' \in \mathcal{C}$ or, setting $n = {\delta}(b)$, $b' \in \mathcal{B}_n$ and there is $b \in \mathcal C_m$ such that $\widehat{b} \cap \widehat{b'} \neq \emptyset$ with $m \leqslant n$. By (SC2), $\widehat{q.b} \supseteq \widehat{b'}$ so that $\widehat{q.b} \ni x$.
\end{proof}

It follows from the lemma that as soon as a $O(u)$-quasisymmetric structure has a countable covering, then it also has a countable packing $\mathcal{C} \subset \beta$ such that $q.\mathcal{C}$ covers.
This holds for instance, if the quasisymmetric structure comes from a separable metric space.

\subsubsection{Gauges}

Let $(Z, \beta)$ be a $O(u)$-quasisymmetric structure.
We call any function $\phi : \mathcal P(Z) \to [0, + \infty)$ a gauge on $(Z, \beta)$, and we denote by $\mathcal{G}(Z)$ the set of gauges.
For every $\ell \in \mathcal{O}^+(u)$, define a shifted gauge $\widetilde{\phi}^\ell : \mathcal{P}(Z) \to [0, \infty) $ by
\[ \widetilde{\phi}^{\ell}(a) = \sup \lbrace \phi(\widetilde{a}) : (a,\widetilde{a}) \in \mathscr{R}^\ell(\beta) \rbrace. \]

We take the convention that $\widetilde{\phi}^\ell(a)= 0$ if the set on the right-hand side is empty. 
Note that if $\ell \geqslant k$ and $a$ is a $k$-round set, then $\widetilde{\phi}^\ell(a) \geqslant \phi(a)$, for $(a, a)$ is a $\ell$-ring.
It is important that no restriction is made on $\phi$.

\subsubsection{Carath{\'e}odory measures}

Let $(Z, \beta)$ be a $O(u)$-quasisymmetric structure.
For all $k, \ell \in \mathcal{O}^+(u)$, for all $A \in \mathcal{P}(Z)$, define
\begin{align}
    \Phi_{p,(k,n)}(A) & = \inf \left\{  \sum_{b \in \mathscr F} \phi(b)^p : \mathscr{F} \subset \mathscr{B}_n^k(\beta), \vert \mathscr{F} \vert \leqslant \aleph_0,\, \mathscr{F}\, \text{covers} \, A \right\}
    \label{eq:Caratheodory} 
    \\
    \widetilde{\Phi}^{\ell}_{p,(k,n)} (A) & = \inf \left\{  \sum_{b \in \mathscr F} \widetilde{\phi}^\ell(b)^p :  \mathscr{F} \subset \mathscr{B}_n^k(\beta), \vert \mathscr{F} \vert \leqslant \aleph_0,\, \mathscr{F}\, \text{covers} \, A \right\}
    \label{eq:shifted-Caratheodory}
\end{align}
and $\Phi_{p;k}(A) = \lim_{n \to +\infty} \Phi_{p,(k,n)}(A)$, $\widetilde \Phi_{p;k}^\ell(A) = \lim_{n \to +\infty} \widetilde{\Phi}_{p,(k,n)}^{\ell}(A)$.
The $O(u)$-quasisymmetric structure $\beta$ is not specified, however if $\beta$ and $\beta'$ are two equivalent $O(u)$-quasisymmetric structures on $Z$ and if $\lambda$, $\eta$, $\eta'$ are such that any $(\ell,n)$-ring for $\beta$ (resp. for $\beta'$) is a $(\eta(\ell), \lfloor \lambda n \rfloor)$-ring for $\beta'$ (resp. for $\beta$), then denoting $\Phi$ and $\Phi'$ the measures that correspond to $\phi$ for $\beta$ and $\beta'$ then
\begin{equation}
    \label{eq:comparison-Caratheordory-measures}
    \left( \Phi '\right)_{p;\eta(k)} \leqslant \Phi_{p;k} \; \text{and} \; \left( \widetilde \Phi '\right)^{\ell}_{p;\eta(k)} \leqslant \widetilde \Phi^{\eta'(\ell)}_{p;k}
\end{equation}
since any covering by $(k,n)$ round sets with respect to $\beta$ is a covering by $(\eta(k), \lfloor \lambda n \rfloor)$ round sets with respect to $\beta'$, and any $\ell$-ring with respect to $\beta'$ is a $\eta'(\ell)$-ring with respect to $\beta$ (note that $\eta$ or $\eta'$ appears on superscript when on the right of $\leqslant$ and on subscript when on the left).

\begin{lemma}[Comparisons with Hausdorff measures]
Assume that the quasisymmetric structure $\beta$ is that of a metric space as in Example \ref{exm:space-with-a-quasidist}. Let $s\in \R_{>0}$ and $\phi(a) = \operatorname{diam}(a)^s$. Then for every $p \in (0,+\infty)$, $k \in \mathcal{O}^+(u)$ and $\varepsilon\in (0,sp)$,
\begin{equation}
    \label{eq:comparison-with-Hausdorff-measures-1}
    \mathcal{H}^{sp+\varepsilon} \ll \Phi_{p;k} \ll \mathcal{H}^{sp-\varepsilon}.
\end{equation}
Moreover, for all $\ell \in \mathcal{O}^+(u)$
\begin{equation}
    \label{eq:comparison-with-Hausdorff-measures-2}
    \widetilde \Phi_{p;k}^\ell  \ll \mathcal{H}^{sp-\varepsilon}.
\end{equation}
\end{lemma}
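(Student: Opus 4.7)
The plan rests on the size relation $\operatorname{diam}(\widehat{b}) \leq 2 e^{-\delta(b)}$ for abstract balls in the metric setting of Example \ref{exm:space-with-a-quasidist}, combined with the sublinearity $k(m), \ell(m) = o(m)$ for $k, \ell \in \mathcal{O}^+(u)$. I prove the three comparisons by transferring covers between the round-set framework and ordinary $\delta$-covers in the sense of Hausdorff.

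\emph{From round covers to Hausdorff covers (left half of \eqref{eq:comparison-with-Hausdorff-measures-1}).} For $n$ large enough that $k(m) \leq m/2$ for all $m \geq n$, every $(k,n)$-round set $b$ is contained in some $\widehat{k.b_0}$ with $\delta(b_0) \geq n$, so $\operatorname{diam}(b) \leq 2 e^{-n/2} =: \delta_n$. Hence an almost optimal $(k,n)$-round cover $\mathscr{F}$ of $A$ is a $\delta_n$-cover, and
\begin{equation*}
\mathcal{H}^{sp+\varepsilon}_{\delta_n}(A) \leq \sum_{b \in \mathscr{F}} \operatorname{diam}(b)^{sp+\varepsilon} \leq \delta_n^\varepsilon \sum_{b \in \mathscr{F}} \operatorname{diam}(b)^{sp} \leq \delta_n^\varepsilon \Phi_{p;k}(A).
\end{equation*}
Letting $n \to \infty$ forces $\mathcal{H}^{sp+\varepsilon}(A) = 0$ whenever $\Phi_{p;k}(A) < \infty$, which is absolute continuity.

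\emph{From Hausdorff covers to round covers (right half of \eqref{eq:comparison-with-Hausdorff-measures-1}).} Given a $\delta$-cover $\{U_i\}$ of $A$ with $\delta \leq e^{-n}$, pick $x_i \in U_i$ and set $m_i = \lfloor -\log \operatorname{diam}(U_i)\rfloor \geq n$ and $b_i = (x_i, m_i)$. Then $\widehat{b_i} \supseteq U_i$ (since $e^{-m_i} \geq \operatorname{diam}(U_i)$) and $\operatorname{diam}(\widehat{b_i}) \leq 2 e^{-m_i} \leq 2 e \, \operatorname{diam}(U_i)$; each $\widehat{b_i}$ is trivially $(k,m_i)$-round, so
\begin{equation*}
\Phi_{p,(k,n)}(A) \leq (2e)^{sp} \sum_i \operatorname{diam}(U_i)^{sp} \leq (2e)^{sp} \delta^\varepsilon \sum_i \operatorname{diam}(U_i)^{sp-\varepsilon}.
\end{equation*}
Infimising over $\{U_i\}$ and sending $\delta = e^{-n} \to 0$ gives $\Phi_{p;k}(A) = 0$ whenever $\mathcal{H}^{sp-\varepsilon}(A) < \infty$.

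\emph{The shifted bound \eqref{eq:comparison-with-Hausdorff-measures-2}.} With the same family $\{\widehat{b_i}\}$ I must bound $\widetilde{\phi}^\ell(\widehat{b_i})$. If $(\widehat{b_i}, \widetilde{a})$ is an $\ell$-ring witnessed by $b'$, then $\widehat{b'} \subseteq \widehat{b_i}$, and the contrapositive of axiom (SC1)(iii) forces $\delta(b') \geq m_i$. Choose $n$ so large that $\ell(m) \leq \varepsilon m/(2sp)$ for $m \geq n$; then
\begin{equation*}
\operatorname{diam}(\widetilde{a}) \leq \operatorname{diam}(\widehat{\ell.b'}) \leq 2 e^{-\delta(b') + \ell(\delta(b'))} \leq 2 e^{-(1 - \varepsilon/(2sp)) m_i}.
\end{equation*}
Raising to the $sp$ power and using $\operatorname{diam}(U_i) \in [e^{-m_i - 1}, e^{-m_i}]$ yields $\widetilde{\phi}^\ell(\widehat{b_i})^p \leq C e^{-\varepsilon m_i/2} \operatorname{diam}(U_i)^{sp - \varepsilon}$; summing and using $m_i \geq n$ gives $\widetilde{\Phi}^\ell_{p,(k,n)}(A) \leq C e^{-\varepsilon n/2} \mathcal{H}^{sp-\varepsilon}(A) \to 0$. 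The main subtle point lies here: the supremum defining $\widetilde{\phi}^\ell$ is taken over all abstract balls $b' \subseteq \widehat{b_i}$, so a uniform lower bound on $\delta(b')$ is needed; (SC1)(iii) delivers exactly this, after which the inflation $e^{\ell(\delta(b'))}$ is absorbed by the reserve $\operatorname{diam}(U_i)^\varepsilon \leq e^{-\varepsilon m_i}$ coming from sublinearity.
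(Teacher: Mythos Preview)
Your proof is correct and follows essentially the same route as the paper's: transferring covers between the round-set and Hausdorff frameworks, and invoking (SC1)(iii) to bound $\delta(b')$ from below when estimating the shift $\widetilde{\phi}^\ell$. The paper is terser, passing through the spherical Hausdorff measure to record the exact two-sided comparison $\mathcal{H}^{sp} \leqslant \Phi_{p;k} \leqslant 2^{sp}\mathcal{H}^{sp}$ and then appealing to the standard dimension shift, whereas you build the $\varepsilon$-margins directly into each estimate; the substance is the same.
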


\begin{proof}
Since balls are $(k,n)$-round sets and diameters of $(k,n)$-round sets are bounded by $e^{-n/2}$ for large enough $n$, $\mathcal{H}^{sp} \leqslant \Phi_{p;k} \leqslant \mathcal{S}^{sp}$, where $\mathcal{H}$ and $\mathcal S$ denote the (non-normalized) Hausdorff and spherical Hausdorff measures. As $\mathcal S^{sp} \leqslant 2^{sp} \mathcal{H}^{sp}$, the comparisons \eqref{eq:comparison-with-Hausdorff-measures-1} follow.
As for \eqref{eq:comparison-with-Hausdorff-measures-2}, note that if $a$ is a ball then $\log \widetilde{\phi}^\ell(a) \leqslant s(\log \operatorname{diam}(a) + \ell (\lceil - \log \operatorname{diam}(a) \rceil))$, especially as $\ell$ is sublinear, for every $\varepsilon >0$, for small enough balls $a$, $\widetilde{\phi}^\ell (a) \leqslant \operatorname{diam}(a)^{s- \varepsilon /(2p)}$, and then $\widetilde{\Phi}_{p;k}^\ell \leqslant \mathcal{S}^{p- \varepsilon/2} \ll \mathcal{H}^{sp - \varepsilon}$.
\end{proof}

\subsubsection{Packing Pre-measure}

Let $(Z, \beta)$ be a quasisymmetric structure and let $A \in \mathcal{P}(Z)$ be a subset.
Let $\mathscr{P}$ be a countable collection of $(k,n)$-outer rings; say that $\mathscr{P}$ is a $(k,n)$-packing centered on $A$, denoted $\mathscr{P} \in \operatorname{Packings}_{k,n}(A)$ if inner sets meet $A$ and outer sets are disjoint; formally
\begin{itemize}
    \item 
    For every $\mathbf{a}=(a^-, a^+)$ in $\mathscr{P}$, $a^- \cap A \neq \emptyset$.
    \item
    For every $\mathbf{a}_0$, $\mathbf{a}_1$ in $\mathscr{P}$, $a_0^+ \cap a_1^+ \neq \emptyset \implies \mathbf a_0 = \mathbf a_1$.
\end{itemize}
Similarly to the shifted packing measure $\widetilde{\Phi}$, define a shifted packing pre-measure
\begin{equation}
    \mathrm{P}\widetilde{\Phi}_{p;k}^{\ell}(A) = \lim_{n \to +\infty} \sup \left\{ \sum_{\mathbf a \in \mathscr{P}} \widetilde{\phi}^\ell(a^-)^p: \mathscr{P}\in \operatorname{Packings}_{k,n}(A) \right\}
    \label{eq:dfn-shifted-packing-meas}
\end{equation}
or $0$ if there exists no packing indexing the sums.

\begin{remark}
    \label{rem:pros-and-cons-of-packing-measures}
    Let $\phi = \lambda \cdot {}^0 \phi+ {}^1\phi$ with $\lambda \in \R_{\geqslant 0}$ and ${}^i \phi \in \mathcal{G}(Z)$ for $i \in \lbrace 0, 1 \rbrace$.
    Associate ${}^i \operatorname{P} \widetilde{\Phi}^\ell_{p;k}$ to ${}^i \phi$ by \eqref{eq:dfn-shifted-packing-meas}.
    {Then by the Minkowski inequality 
    \begin{equation}
        \left( \operatorname{P} \widetilde{\Phi}^\ell_{p;k} \right)^{1/p} \leqslant \lambda \cdot \left( {}^0 \operatorname{P} \widetilde{\Phi}^\ell_{p;k} \right)^{1/p} + \left( {}^1 \operatorname{P} \widetilde{\Phi}^\ell_{p;k} \right)^{1/p}.
        \label{eq:application-of-minkowski-ineq}
    \end{equation}}
\end{remark}

\begin{remark} When changing $O(u)$-quasisymmetric structure from $\beta$ to $\beta'$, the analogs of the comparisons \eqref{eq:comparison-Caratheordory-measures} are
    \begin{equation}
    (\mathrm{P} \widetilde\Phi')_{p;k}^{\eta(\ell)} \geqslant
        \mathrm{P} \widetilde\Phi_{p;\overline{\eta}(k)}^{\ell}.
    \label{eq:comparison-packing-covering-measures}
    \end{equation}
Indeed \eqref{eq:finer-conformal-structure-outer-rings} implies that $\operatorname{Packings}_{\overline{\eta}(k'),n}(\beta) \subset \operatorname{Packings}_{k',\lfloor \lambda n \rfloor} (\beta')$ whereas, every $\ell$-ring for $\beta$ being a $\eta(\ell)$-rings with respect to $\beta'$, the supremum in \eqref{eq:dfn-shifted-packing-meas} is taken over larger sums.
\end{remark}

\begin{remark}
Pansu uses a notion of packing with bounded multiplicity \cite{PansuQuasisym}. However it is not convenient here because even on doubling spaces, if $b\in \beta$ is such that $\delta(b)=n$ then $\widehat{\ell.b}$ cannot be covered by a uniformly bounded number (that is, a number independent of $n$) of concrete balls of the form $\widehat{b'}$ with $\delta(b')=n$.
\end{remark}

\section{Conformal invariants}
\label{sec:conformal-invariants}

By conformal invariants we mean real numbers attached to $O(u)$-quasisymmetric structures, possibly parametrized (for instance by asphericities) and respecting invariance under conformal equivalence. 
This invariance should not be understood too strictly: the vanishing, or infinitude, for some choice of parameters is considered an invariant, though those parameters may vary. 

\subsection[Moduli and variations]{Combinatorial moduli and functions of bounded $p$-variation}

\subsubsection[Combinatorial moduli]{Carath{\'e}odory and packing combinatorial moduli}

The modulus is obtained by minimizing $\widetilde{\Phi}$ under a normalization constraint on the gauge functions, compare Pansu {\cite[2.4]{PansuDimConf}} and Tyson \cite[3.23]{TysonQuasiconfQuasisymmm}: all members of $\Gamma$ should have measure (to be thought of as a length\footnote{This is similar in spirit to requiring a Riemannian metric in a given conformal class to confer sufficient length to any curve in a family as in the definition of the classical moduli.}) greater than $1$.

\begin{definition}
\label{dfn:modulus}
    Let $\Gamma$ be a family of subsets in a conformal structure $(Z, \beta)$, $p \in (0, + \infty)$, $k$, $\ell$ and $m$ in $\mathcal{O}^+(u)$. Define
    \begin{equation*}
    \operatorname{mod}^{\ell,m}_{p;k}(\Gamma, \beta) = \inf \left\{ \widetilde{\Phi}^\ell_{p;k} (Z) : \phi \in \mathcal{G}_{m}(\Gamma, \beta) \right\} \; \text{and}
    \end{equation*}
    \begin{equation*}
        \operatorname{pmod}^{\ell,m}_{p;k}(\Gamma, \beta) = \inf \left\{ \mathrm{P} {\widetilde \Phi}_{p;k}^{\ell} (Z) : \phi \in \mathcal{G}_{m}(\Gamma, \beta) \right\},
    \end{equation*}
where $\mathcal{G}_m(\Gamma, \beta) = \lbrace \phi \in \mathcal{G}(\beta) : \forall \gamma \in \Gamma, \Phi_{1;m}(\gamma) \geqslant 1 \rbrace$ is called a set of admissible gauges for $\Gamma$.
\end{definition}

\begin{remark}
The notation for moduli is the standard one (cf. \cite{PansuDimConf} and \cite{TysonQuasiconfQuasisymmm}), up to the position of upper/lower indices. This is in order to emphasize the monotonicity with respect to the parameters: $\operatorname{mod}^{\ell,m}_{p;k}$ increases with $\ell$ and $m$ but decreases with $p$ and $k$. We shall observe the same convention with other forthcoming quantities.
\end{remark}

When changing conformal structure, the moduli change in the following way:

\begin{lemma}[compare {\cite[2.6]{PansuDimConf}}]
\label{prop:modulus-conf-inv}
{Let $\beta$ and $\beta'$ be two $O(u)$-equivalent $O(u)$-quasisymmetric structures} on $Z$. Let $\Gamma \subset \mathcal{P}(Z)$. Set $\eta, \eta'$ and $\overline{\eta}$ so that $\mathscr{R}^k(\beta) \subset \mathscr{R}^{\eta(k)}(\beta')$, $\mathscr{R}^k(\beta') \subset \mathscr{R}^{\eta'(k)}(\beta)$ and $\mathscr{O}_{\overline{\eta}({j'})}(\beta) \subset \mathscr{O}_{j'}(\beta')$ for every $k,j' \in \mathcal{O}^+(u)$. 
Then for every $k,\ell,m \in \mathcal{O}^+(u)$,
\begin{align}
    \label{eq:comparison-moduli}
    \operatorname{mod}_{p;\eta'(k)}^{\ell,m} \left( \Gamma, \beta \right) & \leqslant  \operatorname{mod}_{p;k}^{\eta(\ell),\eta(m)}(\Gamma, {\beta'}) \; \text{and} \\
    \label{eq:comparison-packing-moduli}
    \operatorname{pmod}_{p;\overline{\eta}(k)}^{\ell,m} \left( \Gamma, \beta \right) & \leqslant  \operatorname{pmod}_{p;k}^{\eta(\ell),\eta(m)}(\Gamma, {\beta'}).
\end{align}
\end{lemma}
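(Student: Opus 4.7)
The plan is to reduce both inequalities to the pointwise comparisons \eqref{eq:comparison-Caratheordory-measures} and \eqref{eq:comparison-packing-covering-measures} between Carathéodory and packing measures that were already established in the excerpt, plus a transfer of admissibility statement for gauges.

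The first step is the transfer of admissibility: I claim that if a gauge $\phi \in \mathcal{G}(Z)$ satisfies $\Phi'_{1;\eta(m)}(\gamma,\beta') \geqslant 1$ for every $\gamma \in \Gamma$ (admissibility in $\beta'$ at parameter $\eta(m)$), then $\Phi_{1;m}(\gamma,\beta) \geqslant 1$ for every $\gamma \in \Gamma$ (admissibility in $\beta$ at parameter $m$). Indeed, any countable covering of $\gamma$ by $(m,n)$-round sets for $\beta$ is, via the hypothesis $\mathscr{R}^{k}(\beta) \subset \mathscr{R}^{\eta(k)}(\beta')$ applied with $k=m$, also a covering of $\gamma$ by $(\eta(m),\lfloor \lambda n\rfloor)$-round sets for $\beta'$. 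Hence the infimum defining $\Phi_{1;(m,n)}(\gamma,\beta)$ in \eqref{eq:Caratheodory} is taken over a subfamily of the coverings defining $\Phi'_{1;(\eta(m),\lfloor\lambda n\rfloor)}(\gamma,\beta')$, and taking limits as $n\to+\infty$ yields $\Phi_{1;m}(\gamma,\beta) \geqslant \Phi'_{1;\eta(m)}(\gamma,\beta') \geqslant 1$. Thus $\mathcal{G}_{\eta(m)}(\Gamma,\beta') \subseteq \mathcal{G}_m(\Gamma,\beta)$.

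The second step is to invoke the pointwise comparisons. After exchanging the roles of $\beta$ and $\beta'$ (and correspondingly swapping $\eta$ and $\eta'$) in the second inequality of \eqref{eq:comparison-Caratheordory-measures}, one gets, for every gauge $\phi$ on $Z$,
\begin{equation*}
    \widetilde{\Phi}^{\ell}_{p;\eta'(k)}(Z,\beta) \;\leqslant\; \bigl(\widetilde{\Phi'}\bigr)^{\eta(\ell)}_{p;k}(Z,\beta').
\end{equation*}
Similarly, \eqref{eq:comparison-packing-covering-measures} gives, with the indexing as stated (since $\overline{\eta}$ controls outer rings in the direction $\beta \to \beta'$),
\begin{equation*}
    \mathrm{P}\widetilde{\Phi}^{\ell}_{p;\overline{\eta}(k)}(Z,\beta) \;\leqslant\; \mathrm{P}\bigl(\widetilde{\Phi'}\bigr)^{\eta(\ell)}_{p;k}(Z,\beta').
\end{equation*}

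The final step is to combine the two steps and take infima. Fix $\phi \in \mathcal{G}_{\eta(m)}(\Gamma,\beta')$. By step one, $\phi \in \mathcal{G}_m(\Gamma,\beta)$, so by definition of the modulus and step two,
\begin{equation*}
    \operatorname{mod}^{\ell,m}_{p;\eta'(k)}(\Gamma,\beta) \;\leqslant\; \widetilde{\Phi}^{\ell}_{p;\eta'(k)}(Z,\beta) \;\leqslant\; \bigl(\widetilde{\Phi'}\bigr)^{\eta(\ell)}_{p;k}(Z,\beta').
\end{equation*}
Taking the infimum of the right-hand side over $\phi \in \mathcal{G}_{\eta(m)}(\Gamma,\beta')$ yields \eqref{eq:comparison-moduli}. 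The exact same argument with $\mathrm{P}\widetilde{\Phi}$ replacing $\widetilde{\Phi}$ and with $\overline{\eta}(k)$ in place of $\eta'(k)$ gives \eqref{eq:comparison-packing-moduli}. The only subtle point — which is the heart of the matter — is the admissibility transfer of step one, which works because admissibility is a condition on coverings and the relation $\mathscr{R}^k(\beta) \subset \mathscr{R}^{\eta(k)}(\beta')$ goes in the direction that enlarges the class of competing covers in $\beta'$; everything else is bookkeeping of how $\eta, \eta', \overline{\eta}$ shift the parameters.
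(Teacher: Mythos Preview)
Your proof is correct and follows essentially the same approach as the paper's own proof: first establish the admissibility transfer $\mathcal{G}_{\eta(m)}(\Gamma,\beta') \subseteq \mathcal{G}_m(\Gamma,\beta)$ via the inclusion of round-set coverings, then apply the measure comparisons \eqref{eq:comparison-Caratheordory-measures} and \eqref{eq:comparison-packing-covering-measures} (with the appropriate role-swap for $\eta,\eta'$), and conclude by taking infima over the smaller admissible class. Your write-up is slightly more explicit in separating the steps, but the argument is identical.
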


\begin{proof}
Let us first concentrate on the change of admissible gauges.
Recall that if $a$ is a round set, then $(a,a)$ is a ring; hence, by assumption, $m$-round sets for $\beta'$ are $\eta(m)$-round sets for $\beta'$. In view of \eqref{eq:Caratheodory}, for all $\gamma$ in $\Gamma$, $\Phi_{1;m}(\gamma) \geqslant \left( \Phi' \right)_{1;\eta(m)}(\gamma)$ (the infimum being computed on more coverings, is smaller), especially $\left( \Phi' \right)_{1;\eta(m)}(\gamma) \geqslant 1$ implies $\Phi_{1;m}(\gamma) \geqslant 1$ so that
\begin{equation}
    \label{eq:comparison-admissible-gauges}
    \mathcal{G}_{\eta(m)}(\beta') \subseteq \mathcal{G}_m(\beta).
\end{equation}
    Now, by \eqref{eq:comparison-Caratheordory-measures}, for all $\phi \in \mathcal{G}(Z)$, $\left( \widetilde \Phi' \right)^{\eta(\ell)}_{p;k} \geqslant \widetilde \Phi_{p; \eta'(k)}^\ell$. 
    Hence, on the left-hand side of \eqref{eq:comparison-moduli}, the infimum in Definition \ref{dfn:modulus} is taken over more gauges, while common admissible gauges contribute to lower values, than on the right-hand side.
    
    The proof of \eqref{eq:comparison-packing-moduli} follows the same lines starting from \eqref{eq:comparison-admissible-gauges}, but uses \eqref{eq:comparison-packing-covering-measures} to compare the shifted packing measures instead of \eqref{eq:comparison-Caratheordory-measures}. 
\end{proof}

\subsubsection{Functions of locally bounded $p$-variation}

We investigate here function spaces that are carried by $O(u)$-quasisymmetric homeomorphisms with a shift in an asphericity parameter.
The notion of $p$-variation we use here is inspired by Pansu's \cite[6.1]{PansuDimConf} (Beware that Pansu calls it ``energy'') but it is actually more closely related to Kleiner and Xie's $Q$-variation (\cite[Definition 3.2]{XieQStilted}, \cite[4]{XieLargeScale}).
For quasimetric spaces $Z$ and $u=1$, the $p$-variation we define is Kleiner and Xie's $Q$-variation, and the reader familiar with $Q$-variation may translate $\mathbf V_{p;k}^\ell(f)(-)$ into $V_{Q,K}(f_{\mid -})$ with $\ell=0$, $p=Q$ and $k =\log K$.

Let $\beta$ be a $O(u)$-quasisymmetric structure on a set $Z$, and let $f : Z \to \C$ be a continuous function. 
Given $p \in [1,+\infty)$ and $k, \ell \in \mathcal{O}^+(u)$ one can associate to $f$ a pre-measure on $Z$ by 
$\mathbf V_{p;k}^{\ell}(f) =  \mathrm{P} \widetilde{\Phi}_{p;k}^{\ell}$ using the gauge 
\[ \phi(a) = \operatorname{diam} f(a) =: \operatorname{osc}(f,a). \]
Fix $k,\ell \in \mathcal{O}^+(u)$.
Say that a continuous function $f$ has bounded $(p;k,\ell)$-variation if  $\mathbf V_{p;k}^{\ell}(f)$ is locally finite.
If $\Omega \subset Z$ is an open subset, denote the space of functions of bounded $p$-variation by
\begin{equation*}
\mathscr{W}^{p;k}_{\ell;\mathrm{loc.}}(\Omega)  = \Big\{  f \in \mathscr{C}^0(\Omega) : \forall K \in \mathcal{P}(\Omega), K \Subset \Omega \implies  \mathbf V_{p;k}^{\ell}(f)(K) < + \infty  \Big\}.
\end{equation*}
For all $K$ compact in $\Omega$, $k \in \mathcal{O}^+(u)$, $\ell \in \mathcal{O}^+(u)$ and $p \in [1,+\infty)$ define
\begin{equation}
    \label{eq:definition-Besov-seminorm}
    \Vert f \Vert^{K;\ell}_{p;k} := \Vert f \Vert_{C^0(K)} + \mathbf V_{p;k}^{\ell}(f)(K)^{1/p}.
\end{equation}
{\begin{lemma}
Let $\Omega$ be an open subset of a $O(u)$-quasisymmetric structure $\beta$. For every $p \in [1, + \infty)$ and $\ell, k \in \mathcal O^+(u)$, $\mathscr{V}^{p;k}_{\ell; \mathrm{loc.}}(\Omega)$ is an algebra for pointwise multiplication and for every $K \Subset \Omega$, $f \mapsto \Vert f \Vert^{K;\ell}_{p;k}$ defines a multiplicative seminorm on $\mathscr{V}^{p;k}_{\ell;\mathrm{loc.}}(\Omega)$.
\end{lemma}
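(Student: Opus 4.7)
The plan is to derive a Leibniz-type pointwise inequality on the oscillation gauge $\phi_h(a):=\operatorname{osc}(h,a)$, lift it to the shifted gauge $\widetilde{\phi}_h^{\ell}$, apply the Minkowski inequality of Remark~\ref{rem:pros-and-cons-of-packing-measures} to the packing pre-measure $\mathbf V_{p;k}^{\ell}(h)=\mathrm{P}\widetilde{\Phi}_{p;k}^{\ell}$, and finally combine the resulting $\mathbf V$-inequality with the submultiplicativity $\|fg\|_{C^0(K)}\leqslant \|f\|_{C^0(K)}\|g\|_{C^0(K)}$. The linear-seminorm axioms (homogeneity and triangle inequality) are then immediate byproducts of the same Minkowski inequality together with $\operatorname{osc}(\lambda f,a)=|\lambda|\operatorname{osc}(f,a)$ and $\operatorname{osc}(f+g,a)\leqslant \operatorname{osc}(f,a)+\operatorname{osc}(g,a)$.

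The starting identity $|f(x)g(x)-f(y)g(y)|\leqslant |f(x)||g(x)-g(y)|+|g(y)||f(x)-f(y)|$ gives, for every $a\subseteq Z$,
\[
\phi_{fg}(a)\leqslant \|f\|_{C^0(a)}\,\phi_g(a)+\|g\|_{C^0(a)}\,\phi_f(a).
\]
Fix $K\Subset \Omega$ and a compact neighborhood $K\subseteq K'\Subset\Omega$. By Definition~\ref{dfn:round-sets-rings}, a ring $(a^-,\tilde{a})$ of asphericity $\ell$ whose defining ball $b'$ satisfies $\delta(b')\geqslant n$ forces $\tilde{a}\subseteq \widehat{\ell.b'}$, whose diameter tends to $0$ as $n\to+\infty$. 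Since $a^-\cap K\neq \emptyset$ for every outer ring in a packing $\mathscr{P}\in \operatorname{Packings}_{k,n}(K)$, and $a^-\subseteq \tilde{a}$, all such $\tilde{a}$ lie in $K'$ once $n$ is large enough. Therefore
\[
\widetilde{\phi}_{fg}^{\ell}(a^-)\leqslant \|f\|_{C^0(K')}\,\widetilde{\phi}_g^{\ell}(a^-)+\|g\|_{C^0(K')}\,\widetilde{\phi}_f^{\ell}(a^-).
\]
Invoking the Minkowski inequality \eqref{eq:application-of-minkowski-ineq} term by term, then taking the supremum over $\mathscr{P}\in \operatorname{Packings}_{k,n}(K)$ and letting $n\to+\infty$, yields
\[
\mathbf V_{p;k}^{\ell}(fg)(K)^{1/p}\leqslant \|f\|_{C^0(K')}\,\mathbf V_{p;k}^{\ell}(g)(K)^{1/p}+\|g\|_{C^0(K')}\,\mathbf V_{p;k}^{\ell}(f)(K)^{1/p},
\]
which at once proves the algebra property. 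Letting $K'$ shrink to $K$ and using the continuity of $f,g$ plus compactness of $K$—so that $\inf_{K'\supseteq K}\|f\|_{C^0(K')}=\|f\|_{C^0(K)}$—gives the same inequality with $K$ in place of $K'$. Adding $\|fg\|_{C^0(K)}\leqslant \|f\|_{C^0(K)}\|g\|_{C^0(K)}$ and expanding $\|f\|_{p;k}^{K;\ell}\cdot \|g\|_{p;k}^{K;\ell}$ into its four terms produces the multiplicative seminorm inequality.

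The main subtlety is this last tightening: replacing the $C^0$ norm on $K'$ by that on $K$. It rests on the combination of (i) the shrinking of outer sets of $K$-packings at level $n$ into any prescribed neighborhood of $K$ for $n$ large, a direct unwinding of the ring axiom; and (ii) the regularity of continuous functions on compacta, which makes $\|f\|_{C^0(K')}$ decrease to $\|f\|_{C^0(K)}$ as $K'$ shrinks to $K$. Everything else is a direct application of the Minkowski inequality recorded in Remark~\ref{rem:pros-and-cons-of-packing-measures}, with the asphericity parameters $k,\ell$ unaffected since the argument stays within the single quasisymmetric structure $(Z,\beta)$.
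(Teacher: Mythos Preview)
Your argument follows essentially the same line as the paper's: the Leibniz inequality for oscillations, the Minkowski inequality \eqref{eq:application-of-minkowski-ineq} on the packing pre-measure, and a limiting argument to localize the $C^0$-norm to $K$. The one substantive difference is in this last step. You route the estimate through an auxiliary compact neighborhood $K'\Supset K$ and then shrink $K'$ to $K$; the paper instead observes directly that
\[
\lim_{n\to+\infty}\ \sup_{\mathscr P\in\operatorname{Packings}_{k,n}(K)}\ \sup_{\mathbf a\in\mathscr P}\ \sup_{(a^-,\tilde a)\in\mathscr R^\ell(\beta)}\ \sup_{\tilde a}|f|\ \leqslant\ \|f\|_{C^0(K)},
\]
using only compactness of $K$, continuity of $f$ on $\Omega$, and the shrinking of the sets $\tilde a$ into any neighborhood of $K$ (which you also prove). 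Your detour implicitly assumes that $K$ admits compact neighborhoods in $\Omega$, i.e.\ local compactness of the underlying topology---an hypothesis that only enters in the \emph{next} lemma, not in this one. The fix is immediate: drop $K'$ and argue, exactly as you already do for the shrinking, that for every $\varepsilon>0$ the open set $\{|f|<\|f\|_{C^0(K)}+\varepsilon\}$ is a neighborhood of $K$ into which all $\tilde a$ eventually fall; this yields the bound with $K$ directly, and the multiplicative seminorm inequality follows as you wrote.
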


\begin{proof}
By \eqref{eq:application-of-minkowski-ineq} and the triangle inequality in $\mathbf R$, for any $f,g \in \mathscr C(\Omega)$ and $\lambda \in \C$ one has $\mathbf V_{p;k}^\ell(\lambda f+ g) \leqslant \vert \lambda \vert \mathbf V_{p;k}^\ell(f) +  \mathbf V_{p;k}^\ell(g))$, so that $\mathscr{V}^{p;k}_{\ell;\mathrm{loc.}}(\Omega)$ is a vector space.
Further, for every $A \subseteq \Omega$,
\begin{equation}
    \mathrm{osc}(fg,a) \leqslant \sup_A \vert f \vert \operatorname{osc}(g,a) + \sup_A \vert g \vert \operatorname{osc}(f,a),
    \label{eq:inequality-on-oscillations}
\end{equation}
while, by definition
\begin{equation}
    \mathbf V_{p;k}^\ell(fg)(K) = \lim_{n \to + \infty} \sup_{\mathscr{P} \in \operatorname{Packings}_{k,n}(K)} \sum_{\mathbf a \in \mathscr{P}} \sup_{(a^-, A) \in \mathscr{R}^\ell(\beta)} \mathrm{osc}(fg,A)^p.
    \label{eq:definition-of-energy-of-g}
\end{equation}
At this point, note that since $K$ has been assumed compact, since the topology associated to $\beta$ is uniform, since $f$ is continuous and since $\limsup_n \cup_{\mathbf a \in \mathscr{P}_n} \sup_{(a^-, A) \in \mathscr{R}^\ell(\beta)} A \subseteq K$ for every sequence of $(k;n)$ packings $\mathscr{P}_n$,
\begin{equation*}
    \lim_{n \to + \infty} \sup_{\mathscr{P} \in \operatorname{Packings}_{k,n}(K)} \sup_{\mathbf a \in \mathscr P} \sup_{(a^-, A) \in \mathscr{R}^\ell(\beta)} \sup_A \vert f \vert \leqslant \Vert f \Vert_{C^0(K)}
\end{equation*}
and the same inequality holds for $g$ so that inserting \eqref{eq:inequality-on-oscillations} in \eqref{eq:definition-of-energy-of-g} and letting $n \to + \infty$ using this estimate and the Minkowski inequality yields
\begin{equation}
    \mathbf V_{p;k}^\ell(fg)(K)^{1/p} \leqslant \Vert f \Vert_{C^0(K)} \mathbf V_{p;k}^\ell(g)^{1/p} + \Vert g \Vert_{C^0(K)} \mathbf V_{p;k}^\ell(f)^{1/p}.
    \label{eq:bound-on-energy-fg}
\end{equation}
From there (recall that $\Vert - \Vert^{K;\ell}_{p;k}$ was defined in \eqref{eq:definition-Besov-seminorm}),
\begin{align*}
\Vert fg \Vert^{K;\ell}_{p;k} & =  \Vert fg \Vert_{C^0(K)} + \mathbf V_{p;k}^{\ell}(fg)(K)^{1/p} \\
& \overset{\eqref{eq:bound-on-energy-fg}}{\leqslant} \Vert f \Vert_{C^0(K)} \Vert g \Vert_{C^0(K)} +  \Vert f \Vert_{C^0(K)} \mathbf V_{p;k}^\ell(g)^{1/p} + \Vert g \Vert_{C^0(K)} \mathbf V_{p;k}^\ell(f)^{1/p} \\
& \leqslant \left( \Vert f \Vert_{C^0(K)} + \mathbf V_{p;k}^{\ell}(f)(K)^{1/p} \right)  \left( \Vert g \Vert_{C^0(K)} + \mathbf V_{p;k}^{\ell}(g)(K)^{1/p} \right) \\
& = \Vert f \Vert^{K;\ell}_{p;k} \Vert g \Vert^{K;\ell}_{p;k}. \qedhere
\end{align*}
\end{proof}
}
In order to add structure to $\mathscr{V}^{p;k}_{\ell;\mathrm{loc.}}(\Omega)$, we will need to assume more on the topology associated with $\beta$.

\begin{definition}[hemicompactness]
Let $X$ be a Hausdorff topological space. An admissible exhaustion of $X$ is an increasing sequence of compact subspaces $(K_n)_{n \geqslant 0}$ of $X$ such that for every compact $K$ of $X$ there exists $n$ such that $K \subset K_n$. A space is hemicompact if it has an admissible exhaustion.
\end{definition}

If $Z$ is a locally compact, second countable topological space, then any open subset of $Z$ is hemicompact. Indeed by Lindel{\"o}f's lemma in a second countable space, every open subset is a Lindel{\"o}f space (meaning that any open cover of it has a countable subcover) \cite[Chapter 1, Theorem 15]{KelleyGenTop}, and a locally compact Lindel{\"o}f space is hemicompact.

\begin{lemma}
Let $(Z,\beta)$ be a $O(u)$-quasisymmetric structure with locally compact, secound countable topology.
For all non-empty open $\Omega \subset Z$, 
$\mathscr{V}^{p;k}_{\ell;\mathrm{loc.}}(\Omega)$ defines a unital commutative algebra with a topology defined by a countable family of seminorms. Further, if $\varphi:(Z',\beta') \to (Z, \beta)$ is a $O(u)$-quasisymmetric homeomorphism then for every open $\Omega' \subset Z'$, letting $\Omega =\varphi(\Omega')$ {the identity map defines} linear continuous algebra homomorphisms
\begin{equation}
    \label{eq:continuous-embeddings-algebras}
    \mathscr{V}^{p;k}_{\eta \circ \eta'(\ell);\mathrm{loc.}} (\Omega') \hookrightarrow
    \varphi^\ast \mathscr{V}^{p;\overline{\eta}(k)}_{\eta'(\ell);\mathrm{loc.}}(\Omega) \hookrightarrow
    \mathscr{V}^{p;\overline{\eta}'\circ \overline{\eta}(k)}_{\ell;\mathrm{loc.}}(\Omega').
\end{equation}
\end{lemma}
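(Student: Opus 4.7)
The proof divides into an algebraic part, a topological part, and the change-of-structure part. The first two establish that for a single $O(u)$-quasisymmetric structure, $\mathscr{V}^{p;k}_{\ell;\mathrm{loc.}}(\Omega)$ is a topological algebra; the last reduces the two continuous embeddings to the packing-measure comparison \eqref{eq:comparison-packing-covering-measures} applied to a pair of $O(u)$-equivalent quasisymmetric structures on the same set.

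First I would handle the algebra. The preceding lemma has already established that $\Vert \cdot \Vert^{K;\ell}_{p;k}$ is a submultiplicative seminorm on $\mathscr{V}^{p;k}_{\ell;\mathrm{loc.}}(\Omega)$ for every compact $K \Subset \Omega$, and the vector-space structure plus closure under pointwise product was proved there. It remains to observe that the constant function $1$, having $\operatorname{osc}(1,\cdot)\equiv 0$, lies in $\mathscr{V}^{p;k}_{\ell;\mathrm{loc.}}(\Omega)$ and is a multiplicative unit; commutativity is automatic. For the topology, I use the observation in the paragraph preceding the statement: since $Z$ is locally compact and second countable, $\Omega$ is hemicompact, with admissible exhaustion $(K_n)_{n \geqslant 0}$. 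Any compact $K \Subset \Omega$ is contained in some $K_n$, and by monotonicity of the packing pre-measure in its argument $\Vert f \Vert^{K;\ell}_{p;k} \leqslant \Vert f \Vert^{K_n;\ell}_{p;k}$; hence the countable family $\{\Vert\cdot\Vert^{K_n;\ell}_{p;k}\}_{n\geqslant 0}$ generates the topology.

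For the continuous embeddings, the key observation is that the gauge $\phi = \operatorname{osc}(f, \cdot)$ is purely set-theoretic: for the bijection $\varphi$ one has $\operatorname{osc}(g\circ\varphi,A) = \operatorname{osc}(g,\varphi(A))$ for every $A \subseteq \Omega'$. Consequently, computing $\mathbf V^{\ell}_{p;k}(g\circ\varphi)$ on $\Omega'$ amounts to computing the packing pre-measure $\mathrm{P}\widetilde{\Phi}^{\ell}_{p;k}$ on $\Omega'$ with respect to the pulled-back structure $\varphi^\ast\beta$ and the gauge $\operatorname{osc}(g,\cdot)$. Definition \ref{dfn:sublin-conf-homeo} asserts that $\beta'$ and $\varphi^\ast\beta$ are $O(u)$-equivalent on $Z'$, with the parameter shifts $\eta, \eta', \overline\eta, \overline\eta'$ appearing in the statement. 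Applying \eqref{eq:comparison-packing-covering-measures} in each direction with the gauge $\operatorname{osc}(f, \cdot)$ yields termwise inequalities between the three packing pre-measures; inserting these into the definition \eqref{eq:definition-Besov-seminorm} of the seminorms, and combining with the exhaustion argument of the previous paragraph, produces the continuity of the two algebra homomorphisms \eqref{eq:continuous-embeddings-algebras}. Linearity and multiplicativity are inherited tautologically from the fact that the identity intertwines pointwise multiplication.

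The main obstacle is really the bookkeeping of parameter shifts: one must verify that applying \eqref{eq:comparison-packing-covering-measures} in both directions between $\beta'$ and $\varphi^\ast\beta$ produces precisely the compositions $\eta\circ\eta'$ and $\overline\eta'\circ\overline\eta$ that appear in the statement (this is why a triple inclusion, rather than a single comparison, is the natural output). A secondary technical point is to check that a compact exhaustion of $\Omega'$ corresponds under $\varphi$ to a compact exhaustion of $\Omega$, which follows from $\varphi$ being a uniform homeomorphism in the sense of Proposition \ref{prop:uniform-structure}; once that correspondence is in place, the seminorm estimates on either side use matching compacts and continuity of the embeddings becomes immediate.
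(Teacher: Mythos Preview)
Your proposal is correct and follows essentially the same route as the paper: hemicompactness yields the countable family of seminorms, and the continuous embeddings come from applying the packing-measure comparison \eqref{eq:comparison-packing-covering-measures} in both directions after reducing to two equivalent structures on the same set. You supply more detail than the paper (the set-theoretic nature of the oscillation gauge, and the correspondence of compact exhaustions under the uniform homeomorphism $\varphi$), but these are exactly the points the paper's terse argument leaves implicit.
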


\begin{proof}
By the observation above each open subset $\Omega$ being hemicompact, has an admissible exhaustion $(K_n)$.
The $\left( \Vert - \Vert^{K_n,k}_{\ell} \right)$ for an exhaustion $K_n$ define a countable family of seminorms on $\mathscr{V}^{p;k}_{\mathrm{loc.}}$; the hemicompactness ensures that the topology does not depend on the choice of the sequence $(K_n)$.
To prove the part about $O(u)$-quasisymmetric homeomorphisms we can assume that $\beta'$ is an $O(u)$-equivalent structure on the same set $Z$.
Denote $\mathbf V$, resp.\ $\mathbf V'$ the variations computed with respect to $\beta$, resp.\ $\beta'$. 
By \eqref{eq:comparison-packing-covering-measures},
\begin{equation*}
    \forall k,\ell \in \mathcal{O}^+(u), \mathbf
    E_{p;k}^{\eta'(\ell)} (f) \geqslant (\mathbf V')_{p;\overline{\eta}'(k)}^\ell (f)
\end{equation*}
(this may be compared to Xie \cite[Lemma 3.1]{XieQStilted})
so that $\mathscr{V}^{p;k}_{\mathrm{loc.}}(\Omega, \beta_{\mid \Omega})$ continuously embeds in $\mathscr{V}^{p;\overline{\eta}'(k)}_{\mathrm{loc.}}(\Omega \mid \beta'_{\mid \Omega})$. \eqref{eq:continuous-embeddings-algebras} is obtained by applying this twice and reversing the r{\^o}les of $\beta$ and $\beta'$.
\end{proof}

\subsubsection{Condensers and capacities}

For $p \in [1, + \infty)$, $k,\ell \in \mathcal{O}^+(u)$ and $\Omega$ an open subset in a $O(u)$-quasisymmetric structure, denote by ${\mathscr{V}}^{p;k}_{\ell;\mathrm{loc.}}(\Omega,\R)$ the $\R$-subspace of $\mathscr{V}^{p;k}_{\ell;\mathrm{loc.}}(\Omega)$ of $\R$-valued functions. 

\begin{definition}[Condenser, capacity]
Let $Z$ be a $O(u)$-quasisymmetric structure and let $\Omega$ be an open subspace.
A condenser in $\Omega$ is a triple of subspaces $(C,\partial_0 C, \partial_1 C)$ such that $C$ is relatively compact, $\partial_0 C$ and $\partial_1 C$ are closed disjoint, and contained in $\overline{C} \setminus C$. Its capacity is
\[ \operatorname{cap}_{p;k}^{\ell}(C) = \inf \left\{ \mathbf V_{p;k}^{\ell}(f)(C) :\, f \in \mathscr V^{p;k}_{\ell;\mathrm{loc.}}(\Omega,\R), \, f_{\mid \partial_0 C} \leqslant 0,\, f_{\mid \partial_1 C} \geqslant 1 \right\}. \]
\end{definition}

\begin{lemma}
\label{lem:capacities-and-moduli}
    Let $(C, \partial_0 C, \partial_1 C)$ be a condenser in $\Omega$, open subset of a $O(u)$-quasisymmetric structure $\beta$. For all $k,\ell,m \in \mathcal{O}^+(u)$, if $\Gamma$ is any family of curves joining $\partial_0 C$ and $\partial_1 C$ in $C$ then
    \begin{equation}
        \operatorname{pmod}^{\ell,m}_{p;k}(\Gamma) \leqslant \operatorname{cap}_{p;k}^{\ell}(C).
        \label{eq:comparison-packing-moduli-capacity}
    \end{equation}
\end{lemma}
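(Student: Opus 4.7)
The plan is to turn any admissible test function for the capacity into an admissible gauge for the packing modulus, without increasing the corresponding functional. Fix $f \in \mathscr{V}^{p;k}_{\ell;\mathrm{loc.}}(\Omega,\mathbf R)$ with $f_{\mid \partial_0 C}\leqslant 0$ and $f_{\mid \partial_1 C}\geqslant 1$. I first replace $f$ by its truncation $\min(\max(f,0),1)$; this leaves the boundary conditions intact and can only decrease oscillations, hence $\mathbf V^{\ell}_{p;k}(f)(C)$. Then I take as gauge $\phi(a):=\operatorname{osc}(f, a\cap C)$, with the convention $\operatorname{osc}(f,\emptyset)=0$, so that $\phi$ vanishes off $C$ while agreeing with $\operatorname{osc}(f,\cdot)$ on subsets of $C$.

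The admissibility $\phi\in\mathcal G_m(\Gamma,\beta)$ is proved by a chaining argument. Fix $\gamma\in\Gamma$, parametrized continuously by $[0,1]$, and let $\mathscr F\subset \mathscr B^m_n(\beta)$ be any countable covering of $\gamma$. Since $\gamma$ is compact in the uniform topology given by Proposition \ref{prop:uniform-structure}, the Lebesgue number lemma furnishes a subdivision $0=t_0<\dots<t_N=1$ and indices $i(j)$ with $\gamma([t_{j-1},t_j])\subseteq b_{i(j)}\cap C$. Telescoping,
\[
1\leqslant f(\gamma(1))-f(\gamma(0))=\sum_{j=1}^N\bigl[f(\gamma(t_j))-f(\gamma(t_{j-1}))\bigr]\leqslant \sum_j\operatorname{osc}(f,b_{i(j)}\cap C)\leqslant \sum_{b\in\mathscr F}\phi(b),
\]
so $\Phi_{1;m}(\gamma)\geqslant 1$ after taking the infimum over coverings.

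The remaining step is to bound $\operatorname{P}\widetilde\Phi^{\ell}_{p;k}(Z)$ computed with $\phi$ by $\mathbf V^{\ell}_{p;k}(f)(C)$. Because $\phi$ vanishes off $C$, the shifted gauge $\widetilde\phi^{\ell}(a^-)$ can be nonzero only when some $\ell$-ring $(a^-,A)$ satisfies $A\cap C\neq\emptyset$; in that case $\widetilde\phi^{\ell}(a^-)=\widetilde{\operatorname{osc}(f,\cdot)}^{\ell}(a^-)$. Given any $(k,n)$-packing $\mathscr P$ of $Z$, one therefore discards the rings whose $\ell$-enlargements do not meet $C$ (they contribute nothing to the sum), so that the remaining rings form a packing whose inner sets effectively center on $C$, and the resulting sum is bounded by the one defining $\mathbf V^{\ell}_{p;k}(f)(C)=\operatorname{P}\widetilde\Phi^{\ell}_{p;k}(C)$. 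Taking the limit $n\to+\infty$ and the infimum over $f$ yields \eqref{eq:comparison-packing-moduli-capacity}.

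The main obstacle is this last bookkeeping step: I need to check that, when transferring a $(k,n)$-packing of $Z$ whose contributing rings meet $C$ into a $(k,n)$-packing centered on $C$, the disjointness of outer sets is preserved and no extra multiplicative constant from the shift $\ell$ is incurred beyond what $\widetilde\phi^{\ell}$ already absorbs. I expect this to be a matter of matching definitions via (SC2), not a source of any new idea.
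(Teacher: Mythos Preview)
Your overall strategy coincides with the paper's: turn a capacity test function $f$ into an admissible gauge via its oscillation, then compare the packing functionals. The substantive divergence is in how admissibility $\phi\in\mathcal G_m(\Gamma,\beta)$ is verified, and there your argument has a gap.

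The Lebesgue number lemma needs an \emph{open} cover. But members of $\mathscr B^m_n(\beta)$ are round sets, i.e.\ arbitrary subsets sandwiched between two concrete balls (Definition~\ref{dfn:round-sets-rings}); they need not be open, nor be a neighborhood of each of their points. So from a countable cover of $\gamma$ by such sets you cannot directly extract a finite chain of subarcs as you claim. The paper avoids this entirely with a one-line argument that you should adopt: since $f$ is continuous and $\gamma$ connects $\partial_0 C$ to $\partial_1 C$, the intermediate value theorem gives $f(\gamma)\supseteq[0,1]$; hence for \emph{any} countable cover $\mathscr F$ of $\gamma$ the intervals $\operatorname{Conv}f(a)$, $a\in\mathscr F$, cover $[0,1]$, and countable subadditivity of $\mathcal H^1$ on $\R$ yields
\[
\sum_{a\in\mathscr F}\phi(a)=\sum_{a\in\mathscr F}\operatorname{diam}f(a)=\sum_{a\in\mathscr F}\mathcal H^1\operatorname{Conv}f(a)\geqslant \mathcal H^1\!\left(\bigcup_{a\in\mathscr F}f(a)\right)\geqslant 1.
\]
No compactness, no finite subcover, no truncation needed; your endpoint worries about $\gamma(0),\gamma(1)\notin C$ evaporate as well.

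On your final bookkeeping concern---matching $\operatorname P\widetilde\Phi^\ell_{p;k}(Z)$ with $\mathbf V^\ell_{p;k}(f)(C)$---your modification $\phi(a)=\operatorname{osc}(f,a\cap C)$ does not close the gap you identified: $\widetilde\phi^\ell(a^-)$ can be nonzero even when $a^-\cap C=\emptyset$, since only the $\ell$-enlargement is required to meet $C$, and such outer rings are not centered on $C$. The paper takes $\phi(a)=\operatorname{diam}f(a)$ on all of $\Omega$ and does not spell this passage out either, simply writing that the conclusion follows from the definitions. In the downstream use (Lemma~\ref{lem:lower-bound-energy}) only the qualitative implication ``infinite packing modulus $\Rightarrow$ infinite $p$-variation on a relatively compact set'' is needed, and that survives either reading.
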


\begin{proof}
Let $\varepsilon > 0$.
Let $f \in \mathscr{V}^{p;k}_{\ell;\mathrm{loc.}}(\Omega, \R)$ be such that $f_{\mid \partial_0 C} \leqslant 0, f_{\mid \partial_1 C} \geqslant 1$.
Let us prove that the gauge $\phi :a \mapsto \mathrm{diam} f(a)$ is in $\mathcal{G}_m(\Gamma,\beta)$; the conclusion will follow by applying the definition of capacites and $p$-variations.
By the intermediate value theorem, for every $\gamma$ in $\Gamma$, $f(\gamma)$ contains $[0,1]$.
Consequently, whenever $\mathscr{F}$ is a covering of $\gamma$ by $m$-round sets, by countable subadditivity of the outer measure $\mathcal{H}^1$ on $\R$
\begin{align*}
    \sum_{a \in \mathscr{F}} \phi(a) = \sum_{a \in \mathscr F} \operatorname{diam} f(a) = \sum_{a \in \mathscr F} \mathcal{H}^1 \operatorname{Conv}(f(a)) 
    & \geqslant \sum_{a \in \mathscr F} \mathcal{H}^1 f(a) \\
    & \geqslant \mathcal H^1 \left( \bigcup_{a \in \mathscr F} f(a) \right) \geqslant 1. \qedhere
\end{align*}
\end{proof}

\subsection{Diffusivity}

The following is a central result in conformal dimension theory 
\cite[4.1.3]{MTconfdim}. 
The guiding principle is a length-volume estimate for a Riemannian parallelotope {\cite[2.2]{PansuDimConf}}; in order to transpose this to the combinatorial moduli, one has to retain a diffusivity condition expressing that a family of curves is sufficiently spread out in the space, \eqref{ineq:diffusivity-assumption} below. 
We give two variants: the first is Pansu's original; the second one is a packing variant.

\subsubsection{Carath{\'e}odory variant}

\begin{proposition}
\label{prop:diffusivity}
Let $(Z, \beta, {\delta}, q)$ be a $O(u)$-quasisymmetric structure. Let $\Gamma$ be a collection of subsets in $Z$, endowed with a positive measure $d\gamma$ such that for any $b \in \beta$, $\lbrace \gamma \in \Gamma : \gamma \cap \widehat b \neq \emptyset \rbrace$ is measurable. For each $\gamma \in \Gamma$, let $m_\gamma$ be a probability Borel measure on $\gamma$. Let $p \in (1, + \infty)$. Assume that there exists a constant $\tau \in (0, +\infty)$ and $r \in \mathcal{O}^+(u)$ such that
\begin{equation}
\tag{$\mathrm D(p,r)$}
\limsup_{n \to + \infty} \sup_{\underline b \in \beta : {\delta}(\underline b) \geqslant n} \int_\Gamma m_\gamma(\gamma \cap \widehat{r.\underline b})^{1-p} \mathbf{1}_{\gamma \cap \widehat{\underline b} \neq \emptyset} d \gamma \leqslant \tau.
\label{ineq:diffusivity-assumption}
\end{equation}
Then for every $k,m \in \mathcal{O}^+(u)$,
\begin{equation}
    \label{eq:diffusivity-caratheordory-conclusion-bound-moduli}
    \operatorname{mod}^{\ell,m}_{p;k}(\Gamma) \geqslant \frac{1}{\tau} \int_\Gamma d\gamma,
\end{equation}
where\footnote{The conclusion of the lemma (as the assumption \eqref{ineq:diffusivity-assumption} is all the more weaker that $r$ is large. In subsection \ref{subsec:heintze} we can arrange the quasisymmetric structure so that $r$ can be assumed $1$, however in subsection \ref{subsec:fuchsian} it is really necessary.} $\ell = q \dotplus r \dotplus k$ {(We recall that the operation $\dotplus$  was defined in \ref{def-of-dotplus})}.
\end{proposition}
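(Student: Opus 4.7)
This is a combinatorial length--volume estimate in the spirit of Pansu~\cite[2.6]{PansuDimConf} and Tyson~\cite[3.23]{TysonQuasiconfQuasisymmm}, adapted to the $O(u)$-sublinear setting. The plan is to let the three shifts compose: $q$ arises from the packing extraction of Lemma~\ref{lem:covering-lemma}, $r$ is the enlargement built into~\eqref{ineq:diffusivity-assumption}, and $k$ comes from the initial $(k,n)$-round cover; associativity of $\dotplus$ then packages them into $\ell = q \dotplus r \dotplus k$.

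Fix $\phi \in \mathcal{G}_m(\Gamma,\beta)$ and a cover $\mathscr{G}$ of $Z$ by $(k,n)$-round sets with $n$ large; for each $a \in \mathscr{G}$ pick an inner abstract ball $\underline{b}_a$ with $\widehat{\underline{b}_a} \subseteq a \subseteq \widehat{k.\underline{b}_a}$. Since $\ell \geqslant k$ for large $n$, the pair $(a,\widehat{\ell.\underline{b}_a})$ is an $\ell$-ring, so $\widetilde{\phi}^\ell(a) \geqslant \phi(\widehat{\ell.\underline{b}_a})$ and it suffices to bound $\sum_{a} \phi(\widehat{\ell.\underline{b}_a})^p$ from below. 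Introduce the intermediate abstract balls $c_a := (r \dotplus k).\underline{b}_a$: since $a \subseteq \widehat{c_a}$, the family $\{\widehat{c_a}\}_{a \in \mathscr{G}}$ covers $Z$. Apply Lemma~\ref{lem:covering-lemma} to extract $\mathscr{C} \subseteq \{c_a\}$ with pairwise disjoint realizations and whose $q$-enlargements still cover $Z$; the identity $q.c_a = \ell.\underline{b}_a$ shows that $\phi(\widehat{q.c})$ is one of the terms of the target sum.

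Fix $\gamma \in \Gamma$. The balls $\{\widehat{q.c}\}_{c \in \mathscr{C}}$ form an $(m,n')$-round cover of $\gamma$ with $n' = \delta(q.c)$ diverging with $n$, so admissibility of $\phi$ yields $1 - \varepsilon \leqslant \sum_{c \in \mathscr{C}_\gamma} \phi(\widehat{q.c})$ where $\mathscr{C}_\gamma := \{c : \widehat{q.c} \cap \gamma \neq \emptyset\}$. Set $w_c(\gamma) := m_\gamma(\gamma \cap \widehat{r.(q.c)})$. H\"older's inequality with exponents $p$ and $p/(p-1)$ yields pointwise in $\gamma$
\[
(1-\varepsilon)^p \leqslant \Bigl(\sum_{c \in \mathscr{C}_\gamma} \phi(\widehat{q.c})^p\, w_c(\gamma)^{1-p}\Bigr)\Bigl(\sum_{c \in \mathscr{C}_\gamma} w_c(\gamma)\Bigr)^{p-1}.
\]
The disjointness of the $\widehat{c}$'s and (SC2) bound the multiplicity of the $(r \dotplus q)$-enlargements $\widehat{r.(q.c)}$ along $\gamma$ by a uniform constant $M$, so $\sum_c w_c(\gamma) \leqslant M\cdot m_\gamma(\gamma) = M$. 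Integrating against $d\gamma$, exchanging sum and integral by Fubini, and applying~\eqref{ineq:diffusivity-assumption} to $\underline{b} = q.c$ then gives
\[
(1-\varepsilon)^p M^{1-p} \int_\Gamma d\gamma \leqslant \tau \sum_{c \in \mathscr{C}} \phi(\widehat{q.c})^p \leqslant \tau \sum_{a \in \mathscr{G}} \widetilde{\phi}^\ell(a)^p,
\]
and taking $n \to +\infty$ (so $\varepsilon \to 0$) followed by the infimum over admissible $\phi$ delivers the desired lower bound, the absolute overlap factor $M^{p-1}$ being absorbed into $\tau$.

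The delicate step, and main obstacle, is the uniform bound on the multiplicity of the $O(u)$-enlargements of a disjoint family of abstract balls along any $\gamma$: since $r \dotplus q$ is itself $O(u)$, one must check that iterated applications of (SC2) at arbitrary depth do not let $M$ blow up with $n$. This is the combinatorial counterpart of the Besicovitch covering theorem in the sublinear context, and it is precisely where the interplay between (SC2) and $\mathcal{O}^+(u)$ enters most visibly; once this overlap bound is secured, the remaining H\"older and Fubini manipulations are routine.
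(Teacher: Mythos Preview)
Your overall architecture (covering lemma, H\"older, Fubini, diffusivity) is right, but the step you yourself flag as ``the main obstacle'' is a genuine gap, and in fact the claimed uniform overlap bound is false in this generality. You have a disjoint family $\{\widehat{c}\}$ at scale roughly $n$, and you enlarge each member by $(r\dotplus q)(n)$, which is an unbounded function of $n$. Already on the real line with the standard structure, a disjoint family of intervals of length $e^{-n}$ enlarged by a factor $e^{(r\dotplus q)(n)}$ has multiplicity of order $e^{(r\dotplus q)(n)}$; nothing in (SC2) prevents this, and no doubling hypothesis is available. So $M$ cannot be taken independent of $n$, and ``absorbing $M^{p-1}$ into $\tau$'' is not legitimate: $\tau$ is prescribed in the hypothesis, and the statement asserts the exact constant $1/\tau$.

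The paper's proof sidesteps this issue entirely by a small but decisive change in the order of operations. Instead of extracting a single disjoint subfamily $\mathscr{C}$ globally and then restricting to each $\gamma$, the paper first fixes $\gamma$, considers $\mathcal{B}_\gamma=\{\underline{b}: b\cap\gamma\neq\emptyset\}$, and applies Lemma~\ref{lem:covering-lemma} to the cover $\{r.k.\underline{b}:\underline{b}\in\mathcal{B}_\gamma\}$ of $\gamma$. This yields $\mathcal{C}_\gamma\subseteq\mathcal{B}_\gamma$ (depending on $\gamma$) such that the balls $\widehat{r.k.\underline{b}}$ for $\underline{b}\in\mathcal{C}_\gamma$ are \emph{themselves} pairwise disjoint, while the $q.r.k.\underline{b}$ still cover $\gamma$. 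The H\"older weights are then taken to be $m_\gamma(\gamma\cap\widehat{r.k.\underline{b}})$, and since these sets are disjoint one gets $\sum m_\gamma(\gamma\cap\widehat{r.k.\underline{b}})\leqslant m_\gamma(\gamma)=1$ directly, with no multiplicity constant at all. After integrating over $\Gamma$ one uses $\mathbf{1}_{\underline{b}\in\mathcal{C}_\gamma}\leqslant\mathbf{1}_{b\cap\gamma\neq\emptyset}$ to pass from the $\gamma$-dependent subfamily back to the global cover $\mathscr{F}$, and the diffusivity hypothesis finishes the job with the correct constant. The trick, in short, is to make the \emph{weight balls} disjoint rather than the original balls, and to let the extracted family vary with $\gamma$.
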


\begin{proof}
Up to the formalism, the proof is due to Pansu \cite[2.9]{PansuDimConf} and we do not depart from it. Inequality \eqref{eq:diffusivity-caratheordory-conclusion-bound-moduli} will actually be obtained through a stronger one: for any $0$-admissible gauge $\phi$, 
\begin{equation}
    \widetilde{\Phi}_{p;k}^{\ell}(\Gamma) \geqslant \tau^{-1} \int_{\Gamma} \Phi_{1;0}(\gamma)^p d \gamma.
    \label{eq:diffusivity-caratheordory-conclusion-bound-moduli-strong}
\end{equation}
(To see why \eqref{eq:diffusivity-caratheordory-conclusion-bound-moduli-strong} implies \eqref{eq:diffusivity-caratheordory-conclusion-bound-moduli} with $m=0$ note that since $p \geqslant 1$ and $\phi$ is admissible the right-hand side is greater than $\int_\Gamma d\gamma$; finally $\operatorname{mod}^{\ell,m}_{p;k}$ increases with $m$). Set an admissible gauge $\phi$. 
Define, for all $n$,
\[ \tau_n :=  \sup_{\underline b \in \beta : {\delta}(b) \geqslant n} \int_\Gamma m_\gamma(\gamma \cap \widehat{r.\underline b})^{1-p} \mathbf{1}_{ \gamma \cap \widehat {\underline b} \neq \emptyset } d \gamma. \]
Fix $n \in \Z$. Let $k \in \mathcal{O}^+(u)$.
Let $\mathscr{F}$ be a countable covering of $Z$ by $(k,n)$-round sets of $\beta$; taking inner ball $\underline{b} \in \beta$ for each round set $b \in \mathscr{F}$ gives a countable $\mathcal{B} \subset \beta$ such that $k.\mathcal{B}$ covers $Z$.
For $\gamma \in \Gamma$ define $\mathcal{B}_\gamma = \lbrace \underline b \in \mathcal{B} : b \cap \gamma \neq \emptyset \rbrace$.
For every $\gamma$, 
$k.\mathcal{B}_\gamma$ is a covering of $\gamma$, since every $x \in \gamma$ is contained in a $b \in \mathscr{F}$ such that $\underline{b}$ has been selected in $\mathcal{B}_\gamma$.
All the more, $r.k.\mathcal{B}_\gamma$ is a covering of $\gamma$ and by Lemma \ref{lem:covering-lemma} one can extract $\mathcal{C}_\gamma$ from $\mathcal{B}_\gamma$ such that $q.r.k.\mathcal{C}_\gamma$ covers $\gamma$ and have disjoint realizations.
Note that $(\widehat{\underline b}, \widehat{q.r.k.\underline b}) \in \mathscr{R}^{q \dotplus r \dotplus k}(\beta)$ (as $\delta (q.r.k.b) = \delta(b) - (q \dotplus r \dotplus k)(\delta(b))$), hence 
\begin{equation*}
\phi(\widehat{q.r.k.\underline b}) \leqslant \sup \left\{ \phi(\widetilde b) :(b,\widetilde{b}) \in \mathscr{R}^\ell(\beta) \right\} 
= \widetilde \phi^\ell(b).
\end{equation*}
Recall that $q.r.k.\mathcal{B}_\gamma$ covers $\gamma$. 
Thus
\begin{equation}
     \Phi_{1,(0,n - \ell (n))}(\gamma) \leqslant \sum_{\underline b \in \mathcal{C}_\gamma} \phi \left( \widehat{q.r.k.\underline{b}} \right)
       \leqslant \sum_{\underline b \in \mathcal{C}_\gamma} \widetilde \phi^\ell(b).  
     \label{eq:diffusivite-bound-by-sum-gamma-shift}
\end{equation}
Next, apply H{\"o}lder's inequality to $\alpha, \zeta : \mathcal{C}_\gamma \to \R$ defined by
\[
\alpha(\underline b)  =  \widetilde{\phi}^\ell(\widehat{\underline b}) m_\gamma(\widehat{r.k.\underline{b}} \cap \gamma)^{(1-p)/p} \; \text{and} \;
\zeta({\underline b})  = m_\gamma(\widehat{r.k.b} \cap \gamma)^{(p-1)/p}
\]
so that
\begin{align}
\Phi_{1,(0,n - \ell(n))}(\gamma)^p 
& \underset{\eqref{eq:diffusivite-bound-by-sum-gamma-shift}}{\leqslant} 
\left( \sum_{b \in \mathcal{C}_\gamma} \alpha(\underline b)^p \right) \left( \sum_{b \in \mathcal{C}_\gamma} \zeta(\underline b)^{p/(p-1)} \right)^{p-1} \notag \\
& \leqslant \left( \sum_{b \in \mathcal C_\gamma} \widetilde{\phi}^{\ell}(b)^p m_\gamma(\widehat{r.k.\underline b} \cap \gamma)^{1-p} \right) \left( \sum_{b \in \mathcal C_\gamma} m_\gamma(\widehat{r.k.\underline b} \cap \gamma) \right)^{p-1} \notag \\
& \leqslant \left( \sum_{b \in \mathcal C _ \gamma}  \widetilde{\phi}^{\ell}(b)^p m_\gamma(\widehat{r.k.\underline b} \cap \gamma)^{1-p} \right) \left( m_\gamma(\gamma) \right)^{p-1}.
\label{eq:diffusivity-2}
\end{align}
The last inequality comes from the fact that the $\widehat{r.k.\underline{b}}$ for $b \in \mathcal{C}_\gamma$ are disjoint by construction, hence their intersections with $\gamma$ are disjoint, and $m_\gamma$ is subadditive. 
Further, since $m_\gamma$ is a probability measure, \eqref{eq:diffusivity-2} rewrites
\begin{equation*}
    \Phi_{1,(0,n -  \ell(n))}(\gamma)^p \leqslant \sum_{b \in \mathcal C_ \gamma}  \widetilde{\phi}^{\ell}(b)^p m_\gamma(\widehat{r.k.\underline b} \cap \gamma)^{1-p}.
\end{equation*}
Integrating over $\Gamma$ yields
\begin{align*}
    \int_{\Gamma} \Phi_{1,(0,n -  \ell(n))}(\gamma)^p d \gamma & \leqslant \int_{\Gamma} \sum_{\underline b \in \mathcal{C}_\gamma} \widetilde{\phi}^{\ell}(b)^p m_\gamma (\widehat{r.k.\underline{b}} \cap \gamma)^{1-p} d \gamma \\
    & \leqslant \sum_{b \in \mathscr F} \widetilde{\phi}^{\ell}(b)^p \int_{\Gamma} \mathbf{1}_{\underline b \in \mathcal{C}_\gamma} m_\gamma (\widehat{r.k.\underline{b}} \cap \gamma)^{1-p} d \gamma 
    \leqslant \tau_n \sum_{b \in \mathscr{F}} \widetilde{\phi}^{\ell}(b)^p .
\end{align*}
Infimizing over every countable $\mathscr{F} \subset \mathscr B^k_{n}(\beta)$ that covers $X$ one obtains:
\begin{equation}
     {\widetilde \Phi}_{p,(k,\ell;n - \ell(n))}(X) \geqslant \tau_n^{-1} \int_{\Gamma} \Phi_{1,(0,n- \ell(n))}(\gamma)^p d \gamma.
     \label{eq:in-diffusivity-lower-bound-Phi-tilde}
\end{equation}
By monotone convergence, if $\phi \in \mathcal{G}_m(\beta)$ then 
\[ \lim_{n \to + \infty} \int_{\Gamma} \Phi_{1,(0,n- \ell(n))}(\gamma)^p d \gamma =\int_{\Gamma} \Phi_{1,(0,n- \ell(n))}(\gamma)^p \geqslant \int_\Gamma d \gamma >0. \]

Since $\ell$ is sublinear, $n - \ell(n)$ goes to $+\infty$ as $n \to +\infty$. 
Especially, ${\widetilde \Phi}^{\ell}_{p;(k,n)}(X)$ is bounded below by \eqref{ineq:diffusivity-assumption}. The conclusion is reached by applying the Definition \ref{dfn:modulus} of the modulus.
\end{proof}

\subsubsection{Packing variant}

\begin{proposition}
\label{prop:diffusivity-packing}
Same assumptions as in Proposition \ref{prop:diffusivity}. Assume in addition that the quasisymmetric structure is that of a separable quasimetric space. For every $k,m \in \mathcal{O}^+(u)$, setting $\ell=q \dotplus r \dotplus k$,
\begin{equation}
    \operatorname{pmod}_{p;k}^{\ell,m}(\Gamma) \geqslant \frac{1}{\tau} \int_{\Gamma} d \gamma.
\end{equation}
\end{proposition}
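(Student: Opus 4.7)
The plan is to follow the proof of Proposition \ref{prop:diffusivity} almost verbatim, but to reinterpret the disjoint subfamily produced by Lemma \ref{lem:covering-lemma} as a \emph{global} $(k,n)$-packing of $Z$ rather than as per-curve data internal to a covering computation. The passage from a per-curve construction (as in Proposition \ref{prop:diffusivity}) to a single global one is made possible by the separability assumption, which allows Lemma \ref{lem:covering-lemma} to be applied once to a global covering.

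Fix an admissible gauge $\phi \in \mathcal{G}_m(\Gamma, \beta)$ and $n$ large. Separability together with (SC3) produces a countable covering $\mathcal{B}$ of $Z$ by abstract balls with $\delta$-value at least $n$. Apply Lemma \ref{lem:covering-lemma} to $r.k.\mathcal{B}$: one obtains $\mathcal{C} \subseteq \mathcal{B}$ such that the family $\{\widehat{r.k.\underline{b}}\}_{\underline{b} \in \mathcal{C}}$ has pairwise disjoint realizations while $\{\widehat{q.r.k.\underline{b}}\}_{\underline{b} \in \mathcal{C}}$ still covers $Z$. Since $\widehat{k.\underline{b}} \subseteq \widehat{r.k.\underline{b}}$ by (SC1)(i), the collection
\[
\mathscr{P} := \bigl\{(\widehat{\underline{b}}, \widehat{r.k.\underline{b}}) : \underline{b} \in \mathcal{C}\bigr\}
\]
consists of $(k,n)$-outer rings (Definition \ref{dfn:outer-ring}) with pairwise disjoint outer sets, hence belongs to $\operatorname{Packings}_{k,n}(Z)$.

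The Hölder computation then carries over unchanged. For each $\gamma \in \Gamma$, let $\mathcal{C}_\gamma = \{\underline{b} \in \mathcal{C} : \widehat{q.r.k.\underline{b}} \cap \gamma \neq \emptyset\}$; since $(\widehat{\underline{b}}, \widehat{q.r.k.\underline{b}}) \in \mathscr{R}^{\ell}(\beta)$ with $\ell = q \dotplus r \dotplus k$, one has $\widetilde{\phi}^{\ell}(\widehat{\underline{b}}) \geqslant \phi(\widehat{q.r.k.\underline{b}})$, and hence $\Phi_{1,(0,\,n-\ell(n))}(\gamma) \leqslant \sum_{\underline{b} \in \mathcal{C}_\gamma} \widetilde{\phi}^{\ell}(\widehat{\underline{b}})$. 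Applying Hölder to the same functions $\alpha$ and $\zeta$ as in the proof of Proposition \ref{prop:diffusivity}, using the global disjointness of $\{\widehat{r.k.\underline{b}}\}$ (which persists when intersected with each $\gamma$) to bound the second factor by $m_\gamma(\gamma)^{p-1} = 1$, then integrating over $\Gamma$ and invoking the diffusivity hypothesis (applied at level $n - k(n)$, which tends to $+\infty$ by sublinearity of $k$) yields
\begin{equation*}
\sum_{\underline{b} \in \mathcal{C}} \widetilde{\phi}^{\ell}(\widehat{\underline{b}})^p \geqslant \tau_n^{-1} \int_\Gamma \Phi_{1,(0,\,n-\ell(n))}(\gamma)^p \, d\gamma.
\end{equation*}

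The left-hand side coincides with $\sum_{\mathbf{a} \in \mathscr{P}} \widetilde{\phi}^{\ell}(a^-)^p$, so it is a lower bound for $\mathrm{P}\widetilde{\Phi}^{\ell}_{p;k}(Z)$; letting $n \to +\infty$ (with $n - \ell(n) \to +\infty$, monotone convergence on the integrand, admissibility of $\phi$ forcing $\liminf_n \Phi_{1,(0,\,n-\ell(n))}(\gamma) \geqslant 1$ on $\Gamma$, and $\limsup_n \tau_n \leqslant \tau$), then taking the infimum over admissible $\phi$, gives the claimed bound on $\operatorname{pmod}^{\ell,m}_{p;k}(\Gamma)$. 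The main point to watch is the parameter bookkeeping: one must check that $(\widehat{\underline{b}}, \widehat{r.k.\underline{b}})$ is genuinely a $(k,n)$-outer ring (not merely a $(r \dotplus k, n)$-one) and that $(\widehat{\underline{b}}, \widehat{q.r.k.\underline{b}}) \in \mathscr{R}^{q \dotplus r \dotplus k}(\beta)$; both are immediate from (SC1)(i) and the composition law $\dotplus$.
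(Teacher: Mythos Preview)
Your proof is correct and follows essentially the same approach as the paper's own argument: apply the covering lemma once globally (using separability) to produce a disjoint family whose $q$-enlargement covers, reinterpret this as a $(k,n)$-packing, then run the H\"older step of Proposition~\ref{prop:diffusivity} per curve and integrate. Your treatment is in fact slightly more explicit than the paper's in two places: you spell out how separability yields the initial countable covering, and you flag the parameter check that $(\widehat{\underline b},\widehat{r.k.\underline b})$ is a genuine $(k,n)$-outer ring rather than only a $(r\dotplus k,n)$-one.
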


\begin{proof}
Fix $n$, pick a countable $(k \dotplus r, n)$ packing $\mathscr{P}$ of $Z$ with the following condition: for every $\mathbf{a} \in \mathscr{P}$ write $\mathbf a = (a^-, a^+)$, enclosing $(\widehat{\underline b}, \widehat{k.r.\underline b})$ in $\mathbf a$ the $q.r.k.\underline{b}$ cover.  Such packings exist by \ref{lem:covering-lemma}. 
This gives a countable $\mathcal{B} \subset \beta$ (the collection of $\underline b$) such that the realizations of $k.r.\mathcal{B}$ are disjoint. 
Define $\mathcal{Q}_\gamma=\left\{ \underline b \in \mathcal B : \widehat{\ell.\underline{b}} \cap \gamma \neq \emptyset \right\}$. 
The realization of $\ell.\mathcal{Q}_\gamma$ will cover $\gamma$ if $\ell \geqslant q \dotplus r \dotplus k$ and then, by definition of the Carath{\'e}odory measure, $\Phi_{1;0}^{n-\ell(n)}(\gamma) \leqslant \sum_{b\in \mathcal{Q}_\gamma} \widetilde{\phi}^{\ell}(b)$. This gives an inequality equivalent to \eqref{eq:diffusivite-bound-by-sum-gamma-shift} with $\mathcal{Q}_\gamma$ instead of $\mathcal{C}_\gamma$.
The rest of the proof follows the same lines as for Proposition \ref{prop:diffusivity} but instead of \eqref{eq:in-diffusivity-lower-bound-Phi-tilde} one obtains:
\begin{equation}
     \tau_n \mathrm{P}
     {\widetilde \Phi}_{p,(k, n - \ell(n))}^{\ell}(X) \geqslant \tau_n \sum_{b\in \cup_{\gamma} \mathcal{Q}_\gamma} \widetilde{\phi}^\ell(b)^p \geqslant \int_{\Gamma} \Phi_{1,(0, n-\ell(n)}(\gamma)^p d \gamma,
     \label{eq:in-diffusivity-lower-bound-P-Phi-tilde}
\end{equation}
before infimizing over every admissible gauge, which gives a lower bound on $\operatorname{pmod}_{p;k}^{\ell,0}$ and then on $\operatorname{pmod}_{p;k}^{\ell,m}$ for every $m$. 
\end{proof}

\subsection{Conformal dimensions}

\begin{definition}
\label{dfn:conformal-dimension}
Let $(Z,\beta)$ be a $O(u)$-quasisymmetric structure, and let $\Gamma$ be a family of subsets in $Z$.
The $O(u)$-conformal dimension of $\beta$ with respect to $\Gamma$ is 
\begin{align*}
\operatorname{Cdim}^\Gamma_{O(u)}(\beta)  = \sup \Big\{  & p \in \R_{> 0} :  \forall k \in \mathcal{O}^+(u),\, \exists \ell \in \mathcal{O}^+(u)\\
    &  \exists m \in \mathcal{O}^+(u),\, \operatorname{mod}_{p;k}^{\ell,m}(\Gamma, \beta) = +\infty \Big\} 
\end{align*}
or $0$ if this set is empty. Similarly, define
\begin{align*}
\operatorname{PCdim}^{\Gamma}_{O(u)}(\beta)  = \sup \Big\{  & p \in \R_{> 0} :  \forall k \in \mathcal{O}^+(u),\, \exists \ell \in \mathcal{O}^+(u)\\
    &  \exists m \in \mathcal{O}^+(u),\, \operatorname{pmod}_{p;k}^{\ell,m}(\Gamma, \beta) = +\infty \Big\}
\end{align*}
or $0$ if this set is empty.
\end{definition}

\begin{remark}
\label{rem:rewrite-confdim-using-monotonicity}
    Given that moduli decrease with respect to $p$, the conformal dimension $\operatorname{Cdim}^{\Gamma}_{O(u)}(\beta)$ can be {bounded above} by
    \begin{equation}
\inf \Big\{  p \in \R_{> 0} :  \exists k \in \mathcal{O}^+(u),
    \forall \ell,m \in \mathcal{O}^+(u),\, \operatorname{mod}_{p;k}^{\ell,m}(\Gamma, \beta) < + \infty \Big\}  \notag
\end{equation}
or $+\infty$ if this set is empty, and similarly, $\operatorname{PCdim}^\Gamma_{O(u)}(\beta)$ by
\begin{equation}
\inf \Big\{  p \in \R_{> 0} :  \exists k \in \mathcal{O}^+(u),
    \forall \ell,m \in \mathcal{O}^+(u),\, \operatorname{pmod}_{p;k}^{\ell,m}(\Gamma, \beta) < +\infty \Big\}.  \notag
\end{equation}
\end{remark}

\begin{proposition}[Conformal invariance of the conformal dimensions]
\label{prop:Cdim-conf-inv}
Let $\varphi : (Z, \beta) \to (Z', \beta')$ be a $O(u)$-quasisymmetric homeomorphism and let ${\Gamma}$, resp.\ $\Gamma'$ be a family of subsets in $Z$, resp.\ $Z'$, such that $\Gamma' = \lbrace \varphi(\gamma) : \gamma \in \Gamma \rbrace$.  Then  
\begin{align}
    \label{eq:C-dim-conf-inv}
    \operatorname{Cdim}^\Gamma_{O(u)} (\beta) & = \operatorname{Cdim}^{\Gamma'}_{O(u)} (\beta') \\
    \label{eq:PC-dim-conf-inv} \operatorname{PCdim}^{\Gamma}_{O(u)} (\beta) & = \operatorname{PCdim}^{\Gamma'}_{O(u)} (\beta').
\end{align}
\end{proposition}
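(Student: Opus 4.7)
The plan is to reduce the two equalities \eqref{eq:C-dim-conf-inv} and \eqref{eq:PC-dim-conf-inv} to the parameter-shifted comparisons of moduli established in Lemma \ref{prop:modulus-conf-inv}, and then to unwind the definition of $\operatorname{Cdim}_{O(u)}^\Gamma$ using the monotonicity of the moduli in their parameters.

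First I would pull the target structure back through $\varphi$, setting $\tilde\beta := \varphi^*\beta'$. By Definition \ref{dfn:sublin-conf-homeo}, $\beta$ and $\tilde\beta$ are two $O(u)$-equivalent $O(u)$-quasisymmetric structures on the same set $Z$. The tautological identification of realizations $\widehat b \leftrightarrow \varphi(\widehat b)$ transports round sets, rings, outer rings, and admissible gauges from $(Z, \tilde\beta)$ with the family $\Gamma$ to $(Z', \beta')$ with the image family $\Gamma' = \varphi(\Gamma)$, so that
\begin{equation*}
\operatorname{mod}_{p;k}^{\ell,m}(\Gamma, \tilde\beta) = \operatorname{mod}_{p;k}^{\ell,m}(\Gamma', \beta')
\quad \text{and} \quad
\operatorname{pmod}_{p;k}^{\ell,m}(\Gamma, \tilde\beta) = \operatorname{pmod}_{p;k}^{\ell,m}(\Gamma', \beta')
\end{equation*}
for every value of $p, k, \ell, m$. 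The problem thus reduces to comparing moduli of the same family $\Gamma$ with respect to two $O(u)$-equivalent structures $\beta$ and $\tilde\beta$ on $Z$.

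Lemma \ref{prop:modulus-conf-inv} applied to $\beta$ and $\tilde\beta$ provides endomaps $\eta, \eta'$ of $\mathcal O^+(u)$ for which $\operatorname{mod}_{p;\eta'(k)}^{\ell,m}(\Gamma, \beta) \leqslant \operatorname{mod}_{p;k}^{\eta(\ell), \eta(m)}(\Gamma, \tilde\beta)$ holds for every $p, k, \ell, m$, together with the symmetric comparison in which $\beta$ and $\tilde\beta$ switch roles. Assuming $p \leqslant \operatorname{Cdim}^\Gamma_{O(u)}(\beta)$, for any $k' \in \mathcal O^+(u)$ I would first extract $\ell, m$ for which $\operatorname{mod}_{p;\eta'(k')}^{\ell,m}(\Gamma, \beta) = +\infty$, then read off from the above inequality (applied with $k := k'$) that the pair $(\ell', m') := (\eta(\ell), \eta(m))$ certifies $\operatorname{mod}_{p;k'}^{\ell',m'}(\Gamma', \beta') = +\infty$. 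This establishes $\operatorname{Cdim}^\Gamma_{O(u)}(\beta) \leqslant \operatorname{Cdim}^{\Gamma'}_{O(u)}(\beta')$; the reverse inequality comes from the symmetric comparison, proving \eqref{eq:C-dim-conf-inv}.

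The proof of \eqref{eq:PC-dim-conf-inv} then proceeds verbatim after replacing \eqref{eq:comparison-moduli} by \eqref{eq:comparison-packing-moduli}, the shift in the parameter $k$ being now prescribed by $\overline\eta$ (and its symmetric counterpart) rather than $\eta'$. The only point requiring care — and the only place where I would expect an obstacle to arise — is the bookkeeping of which endomap of $\mathcal O^+(u)$ acts on which of the parameters $k, \ell, m$, together with the verification that the compositions of such endomaps arising in the argument remain in $\mathcal O^+(u)$; this closure follows at once from the semigroup structure of $\mathcal O^+(u)$ under $\dotplus$.
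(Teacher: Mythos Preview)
Your argument is correct and is essentially the paper's own proof: reduce to two equivalent structures on the same set via pullback, then transfer the $\forall k\,\exists \ell,m$ condition defining $\operatorname{Cdim}^\Gamma_{O(u)}$ from $\beta$ to $\tilde\beta$ by feeding $\eta'(k')$ into the hypothesis and applying Lemma~\ref{prop:modulus-conf-inv}, exactly as the paper does (there the choice is packaged as $L' = \eta\circ L\circ\eta'$, $M' = \eta\circ M\circ\eta'$). One cosmetic remark: your closing sentence about $\dotplus$ is a red herring, since $\eta,\eta',\overline\eta$ are self-maps of $\mathcal O^+(u)$ and their composites are such automatically --- $\dotplus$ plays no role here.
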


\begin{proof}
One can assume $Z = Z'$, $\Gamma = \Gamma'$ and that $\varphi$ is the identity map. Let us start with \eqref{eq:C-dim-conf-inv}.
By symmetry we need only prove $\operatorname{Cdim}^\Gamma_{O(u)}(\beta) \leqslant \operatorname{Cdim}^\Gamma_{O(u)}(\beta')$ and  $\operatorname{PCdim}^{\Gamma;N}_{O(u)}(\beta) \leqslant \operatorname{PCdim}^{\Gamma;N}_{O(u)}(\beta')$.
The conformal dimension $\operatorname{Cdim}^\Gamma_{O(u)}(\beta)$ can be rewritten
\begin{align*}
\operatorname{Cdim}^\Gamma_{O(u)}(\beta)  = \sup \Big\{  & p \in \R_{> 0} :  \exists L,M : \mathcal{O}^+(u) \to \mathcal{O}^+(u)  \\
    & \forall k \in \mathcal{O}^+(u),\, \operatorname{mod}_{p;k}^{L(k),M(k)}(\Gamma, \beta) = +\infty \Big\}.
\end{align*}
Now assume that a real number $p$ is in the set defined on the right and let $L$ and $M$ be the corresponding maps from $\mathcal{O}^+(u)$ to itself. 
Define $L' = \eta \circ L \circ \eta'$ and $M' = \eta \circ M \circ \eta'$.
By Lemma \ref{prop:modulus-conf-inv}, for every $k$ and $m$ in $\mathcal{O}^+(u)$,
\begin{align*}
    0 < \operatorname{mod}_{p;\eta'(k)}^{L(\eta'(k)),M(\eta'(k))}(\Gamma, \beta) & \leqslant \operatorname{mod}_{p;k}^{L'(k),M'(k)}(\Gamma, \beta').
\end{align*}
and the left-hand side is infinite, thus $\operatorname{Cdim}_{O(u)}^\Gamma(\beta') > p$, finishing the proof. \eqref{eq:PC-dim-conf-inv} is obtained in the same way.
\end{proof}

In the following, we may omit $\Gamma$ in $\operatorname{Cdim}^{\Gamma}_{O(u)}$ and write $\operatorname{Cdim}_{O(u)}$; this means that $\Gamma$ must be considered the family of nonconstant curves in $Z$.
Note that homeomorphisms preserve nonconstant curves.

\subsection{Upper bound on {$\operatorname{Cdim}_{O(u)}$}}

\begin{lemma}[Conformal dimension is less or equal than Hausdorff dimension]
\label{lem:confdim-less-than-hausdim}
Let $Z$ be a metric space with Hausdorff dimension $q$. Let $\Gamma$ be the family of nonconstant curves in $Z$. Then $\operatorname{Cdim}_{O(u)}^\Gamma Z \leqslant q$.
\end{lemma}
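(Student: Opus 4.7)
The plan is to show that for every $p > q$, one has $\operatorname{mod}^{\ell,m}_{p;k}(\Gamma,\beta) = 0$ for every choice of $k, \ell, m \in \mathcal{O}^+(u)$; by Remark \ref{rem:rewrite-confdim-using-monotonicity} this yields $\operatorname{Cdim}^\Gamma_{O(u)}(\beta) \leqslant p$, and letting $p \searrow q$ concludes. The whole argument will be driven by a single gauge, chosen so that the Hausdorff comparisons \eqref{eq:comparison-with-Hausdorff-measures-1} and \eqref{eq:comparison-with-Hausdorff-measures-2} pull in the two desired directions.

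Given $p > q$, I will pick $t \in (0, 1 - q/p)$, set $s = 1 - t$, and consider the power-of-diameter gauge $\phi(a) = \operatorname{diam}(a)^s$. Note that $s < 1$ while $sp > q$. The vanishing side is immediate: choosing $\varepsilon' \in (0, sp - q)$, the comparison \eqref{eq:comparison-with-Hausdorff-measures-2} gives $\widetilde{\Phi}^\ell_{p;k}(Z) \ll \mathcal{H}^{sp - \varepsilon'}(Z)$, which is zero since $sp - \varepsilon' > q = \dim_H Z$. This holds for every $k, \ell$, so the modulus vanishes as soon as the gauge is admissible.

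The main obstacle is therefore admissibility: one needs $\Phi_{1;m}(\gamma) \geqslant 1$ for every nonconstant $\gamma \in \Gamma$, and the diameters of curves in $\Gamma$ cannot be bounded away from $0$, so one cannot hope to use a constant multiple of the diameter. The point where $s < 1$ pays off is that any nonconstant curve is a connected subset of $Z$ with positive diameter, hence of Hausdorff dimension at least $1$ (standard: apply the $1$-Lipschitz map $d(x_0, \cdot)$ to obtain, by connectedness, an interval of length $\operatorname{diam}(\gamma)$ in $\mathbf{R}$, and conclude $\mathcal{H}^1(\gamma) \geqslant \operatorname{diam}(\gamma) > 0$). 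Consequently, choosing $\varepsilon \in (0, 1 - s)$, the left-hand half of \eqref{eq:comparison-with-Hausdorff-measures-1} yields $\Phi_{1;m}(\gamma) \gg \mathcal{H}^{s+\varepsilon}(\gamma) = +\infty$ for every nonconstant $\gamma$ and every $m$, so $\phi \in \mathcal{G}_m(\Gamma, \beta)$ uniformly in $m$. Combining this with the preceding paragraph completes the proof; no other subtlety is expected.
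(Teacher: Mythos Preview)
Your proof is correct and follows the same route as the paper: use the diameter-power gauge $\phi = \operatorname{diam}^s$ with $s<1$ and $sp>q$, invoke the Hausdorff comparisons \eqref{eq:comparison-with-Hausdorff-measures-1}--\eqref{eq:comparison-with-Hausdorff-measures-2} to get admissibility on curves and vanishing of $\widetilde{\Phi}^\ell_{p;k}(Z)$. Your admissibility argument is in fact cleaner than the paper's phrasing: you correctly observe that since $s+\varepsilon<1$ and nonconstant curves have Hausdorff dimension at least $1$, one gets $\Phi_{1;m}(\gamma)=+\infty$, whereas the paper's line ``$\Phi_{1;m}\gg\mathcal{H}^1$, curves have positive $\mathcal{H}^1$'' taken literally only yields $\Phi_{1;m}(\gamma)>0$, not $\geqslant 1$.
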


\begin{proof}
In view of remark \ref{rem:rewrite-confdim-using-monotonicity} this will be proved if we can show that for every $\varepsilon \in (0,q)$,
\begin{equation}
    \exists k\in \mathcal{O}^+(u),\, \forall \ell, m \in \mathcal{O}^+(u), \operatorname{mod}_{q+\varepsilon;k}^{\ell,m}(\Gamma)=0.
    \label{eq:to-prove-upperbound-confdim}
\end{equation}
For $s\in(0,1)$ consider $\phi_s \in \mathcal{G}(\beta)$ such that $\phi (\widehat{b}) = e^{-s {\delta}(b)}$ on concrete balls. By comparison with the Hausdorff measures \eqref{eq:comparison-with-Hausdorff-measures-1}, $\Phi_{1;m} \gg \mathcal{H}^1$.
The nonconstant curves have positive $\mathcal{H}^1$ measure by the triangle inequality, so $\phi_s \in \mathcal{G}_{m}(\Gamma)$ for all $s$.
On the other hand, by \eqref{eq:comparison-with-Hausdorff-measures-2}, $(\widetilde \Phi_s)_{q+\varepsilon;k}^\ell \ll \mathcal{H}_{qs+\varepsilon s - \varepsilon'}$ for every small $\varepsilon'$.
For $s$ sufficiently close to $1$ and $\varepsilon'$ sufficiently small, $qs+\varepsilon s- \varepsilon'>q$, so \eqref{eq:to-prove-upperbound-confdim} is attained.
\end{proof}

\section{Applications to large-scale geometry}
\label{sec:large-scale}

Here two metric spaces $Y$ and $Y'$ are said sublinearly biLipschitz equivalent if there exists a sublinearly biLipschitz equivalence $f : Y \to Y'$ (Definition \ref{dfn:SBE}).

\subsection{Heintze groups}
\label{subsec:heintze}

\subsubsection{Definition}
\label{subsubsec:diagonalizable-type}

\begin{definition}
A connected solvable group $S$ is a purely real Heintze group if its Lie algebra sits in a split extension
\begin{equation}
    1 \to \mathfrak{n} \to \mathfrak{s} \to \mathfrak{a} \to 1
\end{equation}
where $\mathfrak{n}$ is the nilradical of $\mathfrak{s}$, $\dim \mathfrak{a}=1$ and the roots associated to $\mathfrak{a} \to \operatorname{Der}(\mathfrak{n})$ are real and positive multiples of each other.
In addition, we say it is of diagonalizable type if $\operatorname{ad}_{\mathfrak{a}}$ is $\mathbf R$-diagonalizable.
\end{definition}

It is convenient to encode a purely real Heintze group type as a pair $(N,\alpha)$ where $N$ is a nilpotent Lie group and $\alpha$ is a derivation of its Lie algebra with real spectrum and lowest eigenvalue $1$, realizing $\mathfrak{a} \to \operatorname{Der}(\mathfrak{n})$ once an infinitesimal generator $\partial_t\in \mathfrak{a}$ has been fixed. 
Such an $\alpha$ being nonsingular, $N$ is the derived subgroup and $(N,\alpha)$ is metabelian if and only if $N$ is abelian. Every Heintze group admits left-invariant negatively curved Riemannian metrics.

The nilradical of a connected solvable group contains an other characteristic subgroup $\operatorname{Exprad}(S)$, defined as the set of exponentially distorted elements (which does not depend on the choice of a left-invariant proper metric) together with $1$. For purely real Heintze groups both are equal\footnote{One reason for this is that $\alpha$ is nonsingular, compare Peng \cite[2.1]{PengCoarseI} keeping in mind that the Cartan subgroup has rank one here.}.

\begin{theorem}[Implied by Cornulier, {\cite[Th 1.2]{CornulierCones11}}]
\label{thm:cornulier}
Let $H$ be a purely real Heintze group with data $(N,\alpha)$. Decompose $\alpha=\sigma+\nu$ where $\sigma$ is semisimple and $\nu$ is a nilpotent derivation of $\mathfrak{n}$ such that $[\sigma,\nu]=0$. Denote by $H_{\infty}$ the purely real Heintze group of diagonalizable type with data $(N,\sigma)$. Then $H$ and $H_{\infty}$ are $O(\log)$-SBE.
\end{theorem}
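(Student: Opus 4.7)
My plan is to take the identity map in coordinates as the candidate $O(\log)$-SBE and to compare the Heintze distances on $H$ and $H_\infty$ through the Jordan decomposition $\alpha=\sigma+\nu$. Since $[\sigma,\nu]=0$, one has $e^{t\alpha}=e^{t\sigma}e^{t\nu}=e^{t\nu}e^{t\sigma}$; identifying the underlying smooth manifolds of both groups with $N\times\R$ (the multiplications differing only through the $\R$-action on $N$), take $f:(n,t)\mapsto(n,t)$ viewed as a map $H\to H_\infty$. This $f$ is a bijection so item (3) of Definition \ref{dfn:SBE} is automatic; items (1) and (2) with $\lambda=1$ reduce to the two-sided estimate
\[
\bigl|d_H(y_1,y_2) - d_{H_\infty}(f(y_1),f(y_2))\bigr| \leqslant C\log\bigl(1 + \max\{d_H(o,y_1),d_H(o,y_2)\}\bigr),
\]
with basepoint $o=(1_N,0)$.

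The main analytic input is the standard large-scale expression for distance in a Heintze group from its basepoint. Choosing a norm $\|\cdot\|$ on $\mathfrak n$ adapted to $\sigma$, one has, up to bounded additive error,
\[
d_{H_\infty}\bigl(o,(\exp X,t)\bigr) \asymp |t| + \log^+ \bigl\|e^{-(t\vee 0)\sigma}X\bigr\|,
\]
with an analogous formula for $d_H$ in which $\sigma$ is replaced by $\alpha$. Nilpotence of $\nu$ gives $\|e^{t\nu}\|\leqslant C(1+|t|)^k$ for some $k\in\Z_{\geqslant 0}$ and $C>0$; substituting inside the logarithm in the formula for $d_H$ contributes at most $k\log(1+|t|)+O(1)$ beyond the formula for $d_{H_\infty}$. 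Since the height coordinate satisfies $|t|\leqslant d_H(o,(\exp X,t))+O(1)$, this error is $O(\log R)$ where $R$ is the larger of the two basepoint distances.

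The main obstacle is to pass from this basepoint estimate to an estimate between arbitrary pairs: inverses and products in $H$ and $H_\infty$ differ in the $n$-coordinate, and one must verify that the distortion introduced by $\nu$ at the group-law level also feeds into the distance comparison as a logarithmic rather than a polynomial error. Left-invariance of both metrics reduces the estimate to computing $y_1^{-1}y_2$ in each group and applying the triangle inequality to absorb the discrepancies into the $O(\log)$ term. This polynomial-to-logarithmic conversion is precisely what pins down $O(\log)$ as the natural sublinear class here, and explains heuristically why $O(u)$-SBE for any admissible $u$ should already see past nilpotent deformations of $\alpha$.
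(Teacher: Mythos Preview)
The paper gives no proof of this statement; it is recorded as a consequence of \cite[Th~1.2]{CornulierCones11} and used as a black box. Your choice of the identity map in coordinates as the candidate SBE is the right one, and the paper itself invokes exactly this map in the special case of Example~\ref{exm:twisted-plane}.

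Your outline is correct but stops short at precisely the step you yourself flag as the ``main obstacle''. Left-invariance does \emph{not} reduce the pairwise estimate to your basepoint estimate, because the element $y_1^{-1}y_2$ computed in $H$ and the element $f(y_1)^{-1}f(y_2)$ computed in $H_\infty$ are different points of $N\times\R$: both have height $t_2-t_1$, but their $N$-coordinates are $e^{-t_1\alpha}(n_1^{-1}n_2)$ and $e^{-t_1\sigma}(n_1^{-1}n_2)$, related by the automorphism $e^{-t_1\nu}$ of $N$. So on top of the basepoint comparison you must also bound $\bigl|d_{H_\infty}\bigl(o,(e^{-t_1\nu}m,s)\bigr) - d_{H_\infty}\bigl(o,(m,s)\bigr)\bigr|$. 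Since the basepoint distance depends on the $N$-coordinate through a logarithmic gauge and $\|e^{-t_1\nu}\|_{\mathrm{op}}\leqslant C(1+|t_1|)^k$, this extra term is $O(\log(1+|t_1|))$, and $|t_1|\leqslant d_H(o,y_1)+O(1)$ because the height is $1$-Lipschitz. That closes the argument, but it needs to be written out rather than invoked. A secondary point: the $\asymp$ in your distance formula is too coarse for the additive control an SBE requires; the formula should be stated up to bounded additive error, and for nonabelian $N$ the quantity $\log^+\|X\|$ must be replaced by a homogeneous log-gauge on $N$ adapted to the grading induced by $\sigma$.
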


\subsubsection{Punctured boundary}

From now on, thanks to Theorem \ref{thm:cornulier}  we work with a purely real Heintze group of diagonalizable type $S$ with data $(N,\alpha)$, that is $S=N\rtimes \R$ where, denoting by $t$ the $\R$ coordinate, $t.x=e^{t\alpha}(x)$ for $x \in N$ and we recall that $\alpha$ is diagonalisable with real positive eigenvalues. 
It is known that this eases the computation of conformal dimension: the latter is attained, indeed by an Ahlfors regular metric, whereas for the twisted plane of Example \ref{exm:twisted-plane} it is not \cite[6]{BonkKleinerCdim} (also, one can prove that no distance has this scaling \cite[5.4]{donne2019metric}).

The vertical geodesics with tangent vector $\partial_t$ all end at time $+\infty$ at a distinguished point $\omega$, that we will call the {\em focal point}, and at time $-\infty$ on the punctured boundary $\partial_\infty^\ast S$ so that we can identify the punctured boundary with $N$; through this identification the one-parameter subgroup generated by $\alpha$ is the dilation subgroup of $\partial_\infty^\ast S$.
Note that if $\rho$ and $\rho'$ are any two proper left-invariant continuous real-valued kernels on $\partial_\infty^\ast S$ such that
$\rho(\xi,\eta)=0 \iff \xi=\eta$ and $\rho(e^{t\alpha}\xi,e^{t\alpha}\eta)= e^t\rho( \xi,\eta)$ for all $t,\xi,\eta$ and similarly for $\rho'$, then $\rho$ and $\rho'$ will only differ by multiplicative constants\footnote{This follows from the same compactness argument which proves that all norm topologies on a finite-dimensional vector space are uniformly equivalent.}. There are several ways to construct such kernels; one is the Euclid-Cygan kernel of Paulin and Hersonsky \cite[appendix]{PaulinHersonsky} which depends on a negatively curved metric on $S$. Another one is Hamenst{\"a}dt's \cite[p.456]{HamenstadtBowenMargulis} (see Dymarz-Peng for its use on boundaries of Heintze groups \cite[2]{DymarzPengBiLip}).
Given the formalism developed in \ref{subsec:definitions-sublin-conf} we will rather use $O(1)$-quasisymmetric structures on the punctured boundary of the form below, which may vary according to our needs. 
{
\begin{definition}
\label{dfn:gen-quasisymmetric}
Let $B$ be a compact subset of $N$ containing $1_N$ in its interior.
We say that a $O(1)$-quasisymmetric structure $\beta^\ast$ is generated by $B$ if $\beta^\ast = N \times \Z$ and for all $b = (x,n) \in \beta$ in this product decomposition, $\widehat{b} = xe^{-\alpha n}(B)$ (note that $\widehat{k.b} = xe^{\alpha k}x^{-1} \widehat{b}$).
\end{definition}

We do not fix $B$, nevertheless the resulting structures for $B$, $B'$ are equivalent since one can find $t>0$ such that $e^{-t\alpha}(B') \subseteq B \subseteq e^{\alpha t}(B')$.}
{We denote by $\beta^\ast$ such a structure on $\partial_\infty^\ast S$.

\begin{lemma}
\label{lem:comparison-quasisym}
Let $\Omega$ be a relatively compact subset of $\partial_\infty^\ast S$. 
Let $\beta$ be the quasisymmetric structure on $\partial_\infty S$ associated with a visual kernel with basepoint $o \in S$ (as in Example \ref{exm:space-with-a-quasidist}). 
Then $\beta_{\mid \Omega}$ and $\beta^\ast_{\mid \Omega}$ are equivalent.
\end{lemma}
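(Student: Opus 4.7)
My plan is to compare both $\beta_{\mid\Omega}$ and $\beta^\ast_{\mid\Omega}$ to the $O(1)$-quasisymmetric structure attached to a single reference quasidistance on $N$. Fix a generating compact $B$ for $\beta^\ast$ and define on $N \times N$ the kernel
\[
\rho^\ast(\xi,\eta) = \inf\bigl\{ e^{-n} : \eta \in \xi \cdot e^{-n\alpha}(B)\bigr\},
\]
with the convention $\rho^\ast(\xi,\xi) = 0$. Compactness of $B$ together with diagonalizability of $\alpha$ with positive spectrum yields the quasi-triangle inequality $(\triangle_K)$ for some $K \geqslant 1$ (the group-theoretic computation reduces, after conjugation by $e^{-n\alpha}$, to the observation that $B \cdot B$ lies in $e^{C\alpha}(B)$ for some $C$), and $\rho^\ast$ is by construction left-$N$-invariant with scaling $\rho^\ast(e^{t\alpha}\xi, e^{t\alpha}\eta) = e^t \rho^\ast(\xi,\eta)$. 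Comparing the realization $x \cdot e^{-n\alpha}(B)$ of an abstract ball in $\beta^\ast$ to the $\rho^\ast$-ball of radius $e^{-n}$ centered at $x$ shows that $\beta^\ast$ agrees, up to a bounded additive shift in $\delta$, with the $O(1)$-quasisymmetric structure attached to $\rho^\ast$ by Example~\ref{exm:space-with-a-quasidist}; the bounded shift is absorbed in the equivalence of Definition~\ref{dfn:equivalence}.

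The main step is then to establish the multiplicative comparison $\rho_o \asymp \rho^\ast$ on the closure $\overline\Omega \Subset \partial_\infty^\ast S$. The classical formula $(\xi,\eta)_o = d_S(o,[\xi,\eta]) + O(1)$ identifies the Gromov product with the $d_S$-distance from $o$ to the bi-infinite geodesic joining $\xi$ and $\eta$. For $S$ of diagonalizable type, a direct geodesic-geometry computation using the eigenspace decomposition of $\alpha$ shows that this distance equals $-\log \rho^\ast(\xi,\eta) + O(1)$ when $\xi,\eta \in \overline\Omega$: the apex of $[\xi,\eta]$ sits at the height prescribed by the homogeneous kernel $\rho^\ast$, and the horizontal position of $o$ relative to $\overline\Omega$ remains bounded. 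This is precisely the comparison underlying the Euclid--Cygan kernel of Paulin--Hersonsky and the Hamenst{\"a}dt kernel, already mentioned before the statement of the lemma. It gives constants $0 < C_1 \leqslant C_2$ with $C_1 \rho^\ast \leqslant \rho_o \leqslant C_2 \rho^\ast$ throughout $\overline\Omega \times \overline\Omega$.

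Once this bi-Lipschitz comparison is in hand, the conclusion is immediate: the inclusions \eqref{eq:finer-conformal-structure} and \eqref{eq:finer-conformal-structure-outer-rings} for $\beta_{\mid\Omega}$ and $\beta^\ast_{\mid\Omega}$ hold with $\lambda = 1$ and with all of $\eta, \eta', \overline\eta$ equal to the identity on $\mathcal{O}^+(u)$, the only cost being a constant additive shift of the scale parameter $n$ by $\lceil \log(C_2/C_1) \rceil$, which is trivially in $\mathcal{O}^+(u)$. Restriction to $\Omega$ in the sense of Remark~\ref{rem:topology} preserves equivalence. The main obstacle is thus the bi-Lipschitz comparison in the middle paragraph, which genuinely uses diagonalizability of $\alpha$ to align the dilation-covariant kernel $\rho^\ast$ with the visual geometry of $S$; the non-diagonalizable case would require a sublinear correction and is precisely the reason one first reduces to diagonalizable type via Theorem~\ref{thm:cornulier}.
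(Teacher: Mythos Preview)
Your approach is essentially the same as the paper's: both reduce to showing that the visual kernel $\rho_o$ and a dilation-covariant kernel on $N$ are bi-Lipschitz equivalent on $\overline\Omega$, via the identity $(\xi,\eta)_o = d_S(o,[\xi,\eta]) + O(1)$ together with the boundedness of the horizontal displacement forced by $\Omega \Subset \partial_\infty^\ast S$. The paper phrases this through the Euclid--Cygan kernel and the ``cloud'' $\top\Omega$ on a horosphere; you phrase it through your explicitly defined $\rho^\ast$; but the paragraph preceding Definition~\ref{dfn:gen-quasisymmetric} already records that any two proper left-invariant kernels with the scaling $\rho(e^{t\alpha}\cdot,e^{t\alpha}\cdot)=e^t\rho(\cdot,\cdot)$ differ only by multiplicative constants, so the two routes are interchangeable.

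One small correction: your closing remark that diagonalizability of $\alpha$ is ``genuinely'' needed for the bi-Lipschitz comparison, and that the non-diagonalizable case would force a sublinear correction, is not accurate for \emph{this} lemma. The comparison $(\xi,\eta)_o = -\log\rho^\ast(\xi,\eta) + O(1)$ on a relatively compact $\Omega$ holds for any purely real Heintze group, since it only uses Gromov hyperbolicity, the horospherical structure, and the existence of a dilation-covariant kernel (the Euclid--Cygan or Hamenst{\"a}dt kernel exists without diagonalizability). Diagonalizability is invoked elsewhere in the section---to obtain a genuine \emph{distance} realizing $\beta^\ast$ and to compute the conformal dimension---but Lemma~\ref{lem:comparison-quasisym} itself does not depend on it.
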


\begin{proof}
See Figure \ref{fig:comparison-of-quasisymm-structures}. The Euclid-Cygan kernel of $\xi, \eta \in \Omega$ with reference horosphere $\mathcal{H}$ centered at $\omega$ is, up to a bounded additive error (only depending on the hyperbolicity constant), the distance between a geodesic segment $(\xi \eta)$ and the cloud $\top \Omega \subset \mathcal H$ casting its geodesic shadow from $\omega$ over $\Omega$. Now since $\Omega$ has been assumed relatively compact in $\partial_\infty^\ast S$, $\top \Omega$ is bounded, so that by the triangle inequality
\begin{equation*}
    (\xi, \eta)_o = d((\xi \eta), o) + O(1) = d((\xi \eta), \top \Omega) + O(1). 
\end{equation*}
Finally, the Euclid-Cygan kernel induces the structure $\beta^\ast$.
\end{proof}}

\begin{figure}[t]
\centering
\begin{tikzpicture}[line cap=round,line join=round,>=angle 60,x=3.0cm,y=3.0cm]
\clip(-1.8,-1.01) rectangle (2,1.01);
\draw (-0.2,0.881) node[anchor=north west] {$\omega$};
\draw [rotate around={0.:(0.,-0.585)},line width=1.pt,dash pattern=on 1pt off 1pt,color=blue] (0.,-0.585) ellipse (2.346cm and 0.8300cm);
\draw [shift={(-1.489,0.8)},line width=1.pt,color=blue]  plot[domain=-1.019:0.,variable=\t]({1.*1.489*cos(\t r)},{1.*1.4896714077290278*sin(\t r)});
\draw [shift={(1.488,0.8)},line width=1.pt,color=blue]  plot[range=200,domain=3.14159:4.1540,variable=\t]({1.*1.4889*cos(\t r)},{1.*1.488*sin(\t r)});
\draw [shift={(0.,0.)},line width=1.pt,dash pattern=on 2pt off 2pt,color=blue]  plot[range=200,domain=3.9051:5.51966,variable=\t]({1.*1.*cos(\t r)},{1.*1.*sin(\t r)});
\draw [shift={(-0.5117,-0.952)},line width=1.pt]  plot[domain=0.20488:1.786,variable=\t]({1.*0.449*cos(\t r)},{1.*0.4498*sin(\t r)});
\draw [<->,shift={(-3.3374,0.8)},line width=1.pt]  plot[domain=5.8631:5.982,variable=\t]({1.*3.337*cos(\t r)+0.*3.337*sin(\t r)},{1.*3.3374*sin(\t r)});
\draw [shift={(-3.3374,0.8)},line width=1.pt,dash pattern=on 2pt off 2pt]  plot[domain=5.9821:5.99833,variable=\t]({1.*3.337*cos(\t r)+0.*3.337*sin(\t r)},{0.*3.3374*cos(\t r)+1.*3.3374*sin(\t r)});
\draw [shift={(-0.51176,-0.9525)},line width=1.pt,dash pattern=on 2pt off 2pt]  plot[domain=0.03236:0.2048,variable=\t]({1.*0.44987*cos(\t r)},{1.*0.449*sin(\t r)});
\draw [rotate around={0.:(0.,-0.10162)},line width=1.pt,color=blue] (0.,-0.101628) ellipse (0.8862cm and 0.2794cm);
\draw [shift={(0,0.34031808873418073)},line width=1.pt,color=blue]  plot[domain=4.29328:5.133,variable=\t]({1.*0.550*cos(\t r)},{0.*0.5501*cos(\t r)+1.*0.5501*sin(\t r)});
\draw [color=blue](0.3729,-0.62366) node[anchor=north west] {$\Omega$};
\draw [color=blue](-0.1,-0.01459) node[anchor=north west] {$\top \Omega$};
\draw (-0.6770,-0.49413) node[anchor=north west] {$\xi$};
\draw (-0.3,-0.8634) node[anchor=north west] {$\eta$};
\draw [shift={(0.,0.)},line width=1.pt,dash pattern=on 2pt off 2pt]  plot[domain=-0.7635:3.905112,variable=\t]({1.*1.*cos(\t r)},{1.*1.*sin(\t r)});
\draw [shift={(-6.2939E-4,0.340)},line width=1.pt]  plot[domain=-1.14959:4.2932,variable=\t]({1.*0.550624884736279*cos(\t r)},{1.*0.550*sin(\t r)});
\node at (0.9,-0.5) [preaction={fill=white!20}]{$d((\xi\eta), \top\Omega) = - \log \rho(\xi, \eta) + O(1)$};
\begin{scriptsize}
\draw [fill=blue] (0.,0.8) circle (1.5pt);
\draw [fill=black] (-0.607,-0.5130) circle (1.5pt);
\draw [fill=black] (-0.0621,-0.937) circle (1.5pt);
\draw [fill=black] (-0.1344,-0.1378) circle (1.5pt);
\end{scriptsize}
\end{tikzpicture}
\caption[Comparison of quasisymmetric structures]{Quasisymmetric structures on a relatively compact open subset of the punctured boundary.}
\label{fig:comparison-of-quasisymm-structures}
\end{figure}
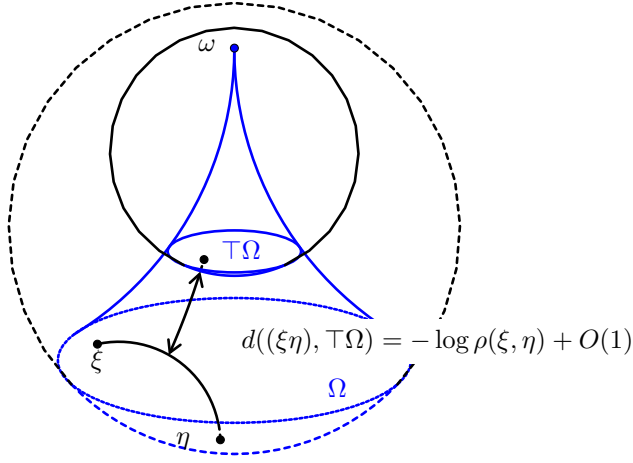

\paragraph{Eigencurves}
For any nonzero eigenvector $v$ of $\alpha$, let $\Gamma_v$ denote the collection of smooth curves in $N$ everywhere tangent to the eigenspace generated by $v$.
A curve $\gamma \in \Gamma_v$ can be parametrized by $\gamma(s)=\gamma(0)e^{sv}$, and thus $\Gamma_v$ is the space of left cosets $N/ \lbrace e^{vt} \rbrace$.
The homogeneous space $\Gamma_v$ has a $N$-invariant, $\alpha$-equivariant measure $\omega_v$ \cite[\S\ 9]{WeilInt}: for any $\lambda$ and nonzero $v\in \ker(\alpha-\lambda)$, for any Borel subset $A$ of $\Gamma_v$,
\begin{equation}
    \label{eq:measure-curve-familie-dilations}
    \omega_v(e^{\alpha t}A)=e^{\operatorname{tr}(\alpha)-\lambda}\omega_v(A).
\end{equation}

\subsubsection[Moduli and conformal dimension]{Moduli of families of eigencurves and conformal dimension}

Let $S$ be a purely real Heintze group of diagonalizable type with data $(N,\alpha)$.
{
If $\Omega$ is an open subset of $\partial_\infty^\ast S$ and $v$ is an eigenvector of $\alpha$, denote by $\Gamma_v(\Omega)$ the set $\lbrace \gamma \cap \Omega : \gamma \in \Gamma_v, \gamma \cap \Omega \neq \emptyset \rbrace$; let us abusively denote $\omega_v$ the measure on $\Gamma_v(\Omega)$.}
The following Lemma corresponds to \cite[2.10 Exemple]{PansuDimConf}.

\begin{lemma}[Lower bound]
\label{lem:lower-bound-heintze}
Let $\lambda \in \mathbf R_{>0}$. 
Let $v \in \ker(\alpha - \lambda)$ be nonzero.
Let $W$ be a $\alpha$-invariant subspace such that $W\oplus \R v =\mathfrak{n}$.
Let {$\beta^\ast_{v,W}$} be the $O(1)$-quasisymmetric structure generated by $B_0=\lbrace \exp P \exp sv \rbrace_{s \in [0,1]}$, where $P\subset W$ is a compact convex subset.
{Let $\Omega \subset \partial_\infty^\ast$ be an open subset and let $\Omega^-$ be an open subset of $\Omega$ such that $\overline{\Omega^-}$ is a concrete ball of $\beta_{v,W}^\ast$.}
For every $\varepsilon > 0$, for every $k\in\mathcal{O}^+(u)$, there exists $\ell\in \mathcal{O}^+(u)$ such that for every $m \in \mathcal{O}^+(u)$,
\begin{equation}
    \label{eq:low-bound-modulus-curves}
    \operatorname{mod}_{(\operatorname{tr}(\alpha)/\lambda)-\varepsilon;k}^{\ell,m} (\Gamma_v(\Omega^-),\beta^\ast_{v,W \mid \Omega})=+\infty,
\end{equation}
and
\begin{equation}
    \label{eq:low-bound-pmodulus-curves}
    \operatorname{pmod}_{(\operatorname{tr}(\alpha)/\lambda)-\varepsilon;k}^{\ell,m}  (\Gamma_v(\Omega^-),\beta^\ast_{v,W \mid \Omega})=+\infty.
\end{equation}
Especially $\operatorname{Cdim}_{O(u)}(\beta^\ast_{\mid \Omega}) \geqslant \operatorname{tr}(\alpha) / \lambda$.
\end{lemma}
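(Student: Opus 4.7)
The plan is to verify the diffusivity assumption \eqref{ineq:diffusivity-assumption} for the family $\Gamma_v(\Omega^-)$ at exponent $p = \operatorname{tr}(\alpha)/\lambda - \varepsilon$ with the $\limsup$ equal to $0$, so that Propositions \ref{prop:diffusivity} and \ref{prop:diffusivity-packing} yield the equalities \eqref{eq:low-bound-modulus-curves} and \eqref{eq:low-bound-pmodulus-curves}. Equip $\Gamma_v(\Omega^-)$ with the reference measure $d\gamma = d\omega_v$: since $\overline{\Omega^-}$ is a concrete ball of $\beta^*_{v,W}$, $\int d\omega_v$ is positive and finite. Parametrize each $\gamma$ by arclength in the $v$-direction and let $m_\gamma$ be the normalized arclength on $\gamma$; because $\overline{\Omega^-}$ is a fixed concrete ball, the total length $L_\gamma$ is uniformly bounded above by some $L_{\max} < +\infty$.

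A concrete ball of $\beta^*_{v,W}$ of level $n$ centered at $x$ reads $\widehat{\underline b} = x\exp(e^{-n\alpha}P)\exp(e^{-n\lambda}[0,1]v)$, and $\widehat{r.\underline b}$ is of the same form with $n$ replaced by $n - r(n)$. Using the splitting $\mathfrak{n} = W \oplus \R v$ (combined with the Baker--Campbell--Hausdorff formula when $N$ is nonabelian), one verifies: (a) $\gamma$ meets $\widehat{\underline b}$ if and only if its right coset in $N/\exp(\R v)$ meets the projection of $x\exp(e^{-n\alpha}P)$, and the $\omega_v$-measure of the set of such curves is, by \eqref{eq:measure-curve-familie-dilations}, comparable to $e^{-(\operatorname{tr}(\alpha) - \lambda)n}$; (b) for such $\gamma$, the intersection $\gamma \cap \widehat{r.\underline b}$ is a crossing of arclength comparable to $e^{-\lambda(n-r(n))}$, uniformly in $x$ and $n$. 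Choosing $r \equiv 1$ and combining (a) and (b), the integral in \eqref{ineq:diffusivity-assumption} is bounded above by
\begin{equation*}
 C \cdot e^{-\lambda(n - 1)(1-p)} \cdot e^{-(\operatorname{tr}(\alpha) - \lambda)n} \;=\; C'\cdot e^{-\lambda \varepsilon n},
\end{equation*}
so the $\limsup$ equals $0$.

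Hence Proposition \ref{prop:diffusivity} applies with $\tau$ arbitrarily small and, taking $\ell = q \dotplus r \dotplus k$, delivers $\operatorname{mod}_{p;k}^{\ell,m}(\Gamma_v(\Omega^-)) \geq \tau^{-1}\int d\omega_v$ for every $\tau > 0$; this forces \eqref{eq:low-bound-modulus-curves}. The packing variant \eqref{eq:low-bound-pmodulus-curves} follows identically from Proposition \ref{prop:diffusivity-packing}. Finally, the lower bound $\operatorname{Cdim}_{O(u)}(\beta^*_{\mid \Omega}) \geq \operatorname{tr}(\alpha)/\lambda$ follows from Definition \ref{dfn:conformal-dimension} together with monotonicity of moduli under inclusion of curve families.

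The main obstacle I anticipate is the uniform crossing estimate (b) when $N$ is nonabelian: the set $\exp(e^{-n\alpha}P)\exp(e^{-n\lambda}[0,1]v)$ is not a literal Cartesian product in $N$, and one must check that BCH corrections do not spoil the exponential orders uniformly in the center $x$ as $n\to+\infty$. This reduces to a local calculation exploiting that $\alpha$ is diagonalizable and that $W$ is $\alpha$-invariant.
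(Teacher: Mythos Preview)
Your proposal is correct and follows essentially the same route as the paper: verify the diffusivity hypothesis \eqref{ineq:diffusivity-assumption} with $r=1$ by combining the transverse measure estimate $\omega_v\{\gamma:\gamma\cap\widehat b\neq\emptyset\}\lesssim e^{-(\operatorname{tr}\alpha-\lambda)n}$ with the longitudinal crossing estimate $m_\gamma(\gamma\cap\widehat{1.b})\gtrsim e^{-\lambda n}$, obtaining a $\limsup$ of $0$ so that $\tau$ may be taken arbitrarily small in Propositions~\ref{prop:diffusivity} and~\ref{prop:diffusivity-packing}. The paper's proof is terser and does not spell out the BCH concern you raise for nonabelian $N$; your remark that the product form of $B_0$ and the $\alpha$-invariance of $W$ control this is a reasonable addition, though the paper simply relies on the choice of generator $B_0=\exp P\cdot\exp([0,1]v)$ making the $v$-slice length exactly $e^{-\lambda n}$ by construction.
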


\begin{proof}
Set $p = (\operatorname{tr}(\alpha)/\lambda) - \varepsilon$. 
For every $\gamma \in \Gamma_v(\Omega^-)$ let $m_\gamma$ be the Lebesgue measure supported on $\gamma$ with total mass $1$ (the existence is provided by the fact that $\Omega^-$ is relatively compact). 
For every $b \in \beta^\ast_{v,W \mid \Omega}$, letting  $n=\delta(b)$, by \eqref{eq:measure-curve-familie-dilations},
$\omega_v \left\{ \gamma \in \Gamma_v(\Omega^-) : \gamma \cap \widehat{ b} \neq \emptyset \right\} \leqslant \exp \lbrace{-n(\operatorname{tr} \alpha - \lambda)} \rbrace$
while for every $\gamma \in \Gamma_v (\Omega^-)$,
$m_\gamma (\gamma \cap \widehat{1. b})  \geqslant {\mathrm{const}.}  e^{-\lambda n}$ if $\gamma \cap \widehat{{ b}} \neq \emptyset$.
Consequently, 
\begin{align*} 
     \log \int_{\Gamma_v} m_\gamma(\gamma \cap \widehat{{b}})^{1-p} \mathbf{1}_{\gamma \cap \widehat {1.b} \neq \emptyset} d\gamma 
     & \leqslant -n(\operatorname{tr} \alpha - \lambda) - (1-p) \lambda n \\
     & = - \varepsilon \lambda n.
\end{align*}
Thus \eqref{ineq:diffusivity-assumption} is fullfilled for $r =1$ and for every $\tau \in (0,+\infty)$; Propositions \ref{prop:diffusivity} and \ref{prop:diffusivity-packing} then yield \eqref{eq:low-bound-modulus-curves} and \eqref{eq:low-bound-pmodulus-curves} respectively.
{The lower bound on the conformal dimension follows from the definition, viewing $\Gamma_v(\Omega^-)$ as a subcollection of the full collection of nonconstant curves in $\Omega$.}
\end{proof}

\begin{proposition}
\label{prop:computation-Cdim-Heintze}
Let $S$ be a purely real Heintze group of diagonalizable type with data $(N,\alpha)$; assume that the lowest eigenvalue of $\alpha$ is $1$.
{Let $\beta^\ast$ denote a quasisymmetric structure as provided by Definition \ref{dfn:gen-quasisymmetric}}.
Let $\Omega$ be any open subset of $\partial_\infty^\ast S$.
Then
\begin{equation*}
    \label{eq:computation-of-conformal-dimension}
    \mathrm{Cdim}_{O(u)}(\beta^\ast_{\mid \Omega}) = {\operatorname{tr}(\alpha)}.
\end{equation*}
\end{proposition}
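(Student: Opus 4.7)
Prove matching lower and upper bounds.

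For the lower bound, exploit $\mathbf R$-diagonalizability of $\alpha$ and the assumption that its minimal eigenvalue is $1$: pick a nonzero $v \in \ker(\alpha - 1)$ and an $\alpha$-invariant complement $W \subset \mathfrak n$ so that $\mathfrak n = W \oplus \mathbf R v$. The quasisymmetric structure $\beta^\ast_{v,W}$ of Lemma \ref{lem:lower-bound-heintze}, being generated by a compact neighborhood of $1_N$, is equivalent to $\beta^\ast$ by the paragraph following Definition \ref{dfn:gen-quasisymmetric}. Choose a nonempty open $\Omega^- \subseteq \Omega$ whose closure is a concrete ball of $\beta^\ast_{v,W}$. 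Lemma \ref{lem:lower-bound-heintze} applied with $\lambda = 1$ and arbitrary $\varepsilon > 0$ gives $\operatorname{mod}^{\ell,m}_{\operatorname{tr}(\alpha) - \varepsilon; k}(\Gamma_v(\Omega^-)) = +\infty$; since $\Gamma_v(\Omega^-)$ embeds into the family of nonconstant curves in $\Omega$ and moduli only grow when enlarging the curve family, this yields $\operatorname{Cdim}_{O(u)}(\beta^\ast_{\mid \Omega}) \geqslant \operatorname{tr}(\alpha) - \varepsilon$, hence $\geqslant \operatorname{tr}(\alpha)$ as $\varepsilon \to 0$.

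For the upper bound, realize $\beta^\ast$ as the quasisymmetric structure of a (quasi)metric. Choose a left-invariant continuous kernel $\varrho$ on $N$ with the scaling $\varrho(e^{t\alpha}\xi, e^{t\alpha}\eta) = e^t\varrho(\xi,\eta)$ (see the discussion preceding Definition \ref{dfn:gen-quasisymmetric}); its $O(1)$-quasisymmetric structure via Example \ref{exm:space-with-a-quasidist} is equivalent to $\beta^\ast$. The Haar measure of $N$ is $\operatorname{tr}(\alpha)$-Ahlfors regular for $\varrho$: a $\varrho$-ball at $x$ of radius $r$ is $xe^{\alpha\log r}(B_1)$ for a fixed unit ball $B_1$ at $1_N$, and $\det(e^{\alpha t}) = e^{t\operatorname{tr}(\alpha)}$ gives Haar measure $\operatorname{vol}(B_1)\cdot r^{\operatorname{tr}(\alpha)}$. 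Therefore $(N, \varrho)$, and hence any open subset $\Omega$, has Hausdorff dimension $\operatorname{tr}(\alpha)$. Lemma \ref{lem:confdim-less-than-hausdim} together with conformal invariance (Proposition \ref{prop:Cdim-conf-inv}) then gives $\operatorname{Cdim}_{O(u)}(\beta^\ast_{\mid \Omega}) \leqslant \operatorname{tr}(\alpha)$.

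The main technical care concerns the application of Lemma \ref{lem:confdim-less-than-hausdim}, which is stated for metric spaces: if $\varrho$ happens to be only a quasidistance one either extends the lemma directly (its proof requires only positivity of the $\mathcal H^1$-measure of nonconstant curves, which persists for quasidistances) or snowflakes $\varrho$ to a genuine metric $\varrho^s$ with $s$ small enough, absorbing the ensuing rescaling of $\delta$ into the parameter $\lambda$ of Definition \ref{dfn:equivalence}; the blow-up of the Hausdorff dimension by $1/s$ is then matched by the inverse rescaling of scales, leaving the bound $\operatorname{tr}(\alpha)$ intact.
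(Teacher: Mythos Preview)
Your lower bound is exactly the paper's: apply Lemma~\ref{lem:lower-bound-heintze} with $\lambda=1$ (the minimal eigenvalue) and let $\varepsilon\to 0$.

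For the upper bound the paper takes a cleaner route than either of your workarounds: it invokes the theorem of Le~Donne and Nicolussi~Golo \cite[Theorem~D]{donne2019metric} that, precisely because $\alpha$ is $\mathbf R$-diagonalizable, there is a \emph{genuine} left-invariant distance on $N$ with the dilation property $d(e^{t\alpha}\xi,e^{t\alpha}\eta)=e^t d(\xi,\eta)$. This lands you directly in the hypotheses of Lemma~\ref{lem:confdim-less-than-hausdim} with $\operatorname{Hdim}=\operatorname{tr}(\alpha)$, no detour needed.

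Your snowflaking fallback is actually wrong as written. If $\varrho$ is only a quasidistance and you pass to $\varrho^s$ with $s<1$ to obtain a true metric, then $\operatorname{Hdim}(\varrho^s)=\operatorname{tr}(\alpha)/s>\operatorname{tr}(\alpha)$, while $\operatorname{Cdim}_{O(u)}$ is \emph{unchanged} under this passage (the structures are $O(u)$-equivalent via the $\lambda$ of Definition~\ref{dfn:equivalence}, and Proposition~\ref{prop:Cdim-conf-inv} says equivalent structures have equal conformal dimension). So Lemma~\ref{lem:confdim-less-than-hausdim} applied to $\varrho^s$ only yields $\operatorname{Cdim}_{O(u)}\leqslant \operatorname{tr}(\alpha)/s$, which is too weak. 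There is no ``inverse rescaling'' that compensates: the rescaling of $\delta$ is absorbed into $\lambda$ and leaves $\operatorname{Cdim}_{O(u)}$ alone, but it does not rescale the bound.

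Your other suggestion---extending Lemma~\ref{lem:confdim-less-than-hausdim} to quasidistances---is viable (the proof only needs $\mathcal H^{q'}\equiv 0$ for $q'>q$ and positivity of $\mathcal H^1$ on nonconstant curves, both of which survive the quasi-triangle inequality), but the paper's citation of \cite{donne2019metric} makes this unnecessary.
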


\begin{proof}
Under the given assumption that $\alpha$ is diagonalizable, by a theorem of Le Donne and Nicolussi Golo, there exists a true distance on $N$ inducing the quasisymmetric structure $\beta^\ast$ on $\partial_\infty^\ast S$ \cite[Theorem D]{donne2019metric}
, and scaling as $d(e^{t\alpha} \xi, e^{t\alpha} \eta) = e^t d(\xi, \eta)$ while $\operatorname{Jac}(e^t\alpha) = e^{t \cdot \operatorname{tr}(\alpha)}$, so  $\operatorname{Hdim}(d) \leqslant \operatorname{tr}(\alpha)$. By the previous Lemma \ref{lem:lower-bound-heintze} and Lemma \ref{lem:confdim-less-than-hausdim} bounding above conformal dimension with Hausdorff dimension, $\mathrm{Cdim}_{O(u)}(\beta^\ast_{\mid \Omega}) = {\operatorname{tr}(\alpha)}.$
\end{proof}

\begin{lemma}[after {\cite[6D1]{CornulierQIHLC}}]
\label{lem:Cornulier-trick}
Let $S$ and $S'$ be Heintze groups with focal points $\omega$, $\omega'$. If there exists a sublinear biLipschitz equivalence $f_0 : S \to S'$, then there exists a sublinear biLipschitz equivalence $f: S\to S'$ such that $\partial_\infty f(\omega) = \omega'$.
\end{lemma}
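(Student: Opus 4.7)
Let $\xi' := \partial_\infty f_0(\omega) \in \partial_\infty S'$; this point is well-defined because $f_0$ extends to a homeomorphism between Gromov boundaries by Theorem \ref{thm:pallier-SBE2mobius}. If $\xi' = \omega'$ then $f = f_0$ already does the job, so assume $\xi' \in \partial_\infty^\ast S' = N'$. The strategy is to construct a self-sublinearly biLipschitz equivalence $g : S' \to S'$ such that $\partial_\infty g(\xi') = \omega'$; then since the composition of two $O(u)$-SBE is an $O(u)$-SBE (see \cite{cornulier2017sublinear}), the map $f := g \circ f_0$ satisfies $\partial_\infty f(\omega) = \omega'$ as required.

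The construction of $g$ proceeds in two steps. First, the group $N'$ embeds in the isometry group of $S'$ via left translations, which fix $\omega'$ and act transitively on $\partial_\infty^\ast S' = N'$. Pre-composing with such a translation (an honest isometry, hence an SBE) we may normalize and assume $\xi' = 1_{N'}$. It remains to produce a self-SBE of $S'$ swapping $1_{N'}$ and $\omega'$. For this one fixes a Euclid-Cygan type quasidistance $\rho'$ on $\partial_\infty^\ast S'$ (as in \cite[appendix]{PaulinHersonsky}) together with its extension to $\partial_\infty S'$ obtained by declaring $\rho'(\omega', \eta) = +\infty$ for $\eta \neq \omega'$, and one inverts: define $\iota(\omega') = 1_{N'}$, $\iota(1_{N'}) = \omega'$, and $\iota(\eta) = \eta \cdot \rho'(\eta, 1_{N'})^{-2}$ elsewhere (with respect to the group law of $N'$). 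This inversion is a quasi-Möbius involution swapping $1_{N'}$ and $\omega'$.

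By the Bonk-Schramm hyperbolic cone functor \cite[\S~7]{BonkSchrammEmbeddings}, the map $\iota$ lifts to a quasi-isometric self-equivalence of the hyperbolic cone over $\partial_\infty S'$. Since $S'$ is itself quasi-isometric to this hyperbolic cone (being a proper geodesic Gromov-hyperbolic space with the given boundary), the lifted map transports to a quasi-isometry, and a fortiori an $O(u)$-SBE, $g : S' \to S'$ whose boundary map is $\iota$. In particular $\partial_\infty g(\xi') = \omega'$, as desired.

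The main obstacle lies in this last paragraph: ensuring that the quasi-Möbius inversion $\iota$ lifts to a self-SBE of $S'$ rather than merely of some a priori distinct Gromov-hyperbolic space with the same boundary. Handling this rigorously is a standard but nontrivial application of the hyperbolic cone / filling machinery together with the fact that Heintze groups are uniquely determined up to quasi-isometry by the quasisymmetry-gauge class of their boundary. A slicker alternative, when available, is to use an explicit geodesic inversion on $S'$ (classical for rank-one symmetric $S'$, and extendable to general purely real Heintze groups by direct computation in the coordinates $N' \rtimes \R$).
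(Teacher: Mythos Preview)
Your approach has a genuine gap at the construction of the inversion $\iota$. The formula $\iota(\eta) = \eta \cdot \rho'(\eta,1_{N'})^{-2}$ does not make sense on a general nilpotent Lie group $N'$ (there is no scalar multiplication), and even if one reinterprets it via the dilation semigroup $e^{t\alpha'}$, there is no reason for the resulting map to be quasi-M\"obius. More seriously, for many Heintze groups $S'$ the focal point $\omega'$ is known to be fixed by \emph{every} self-quasiisometry (indeed every quasi-M\"obius self-map of the boundary); this is precisely what happens when $S'$ is not quasiisometric to a rank-one symmetric space. In those cases no self-SBE of $S'$ can carry a point of $\partial_\infty^\ast S'$ to $\omega'$, so your step~(5)--(6) simply fails. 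Your final sentence (``extendable to general purely real Heintze groups by direct computation'') is therefore not a technicality but the whole difficulty, and it is false in general.

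The paper's proof avoids this entirely by an orbit-counting trick. One observes that left translations embed $S'$ into $\operatorname{SBE}(S')$ and already act transitively on $\partial_\infty S'\setminus\{\omega'\}$, so $\operatorname{SBE}(S')$ has either one or two orbits on $\partial_\infty S'$. Since $f_0$ conjugates $\operatorname{SBE}(S)$ to $\operatorname{SBE}(S')$, the orbit counts on both sides agree. If there is a single orbit, one composes with some $h\in\operatorname{SBE}(S')$ carrying $\partial_\infty f_0(\omega)$ to $\omega'$; if there are two orbits, then $\{\omega\}$ and $\{\omega'\}$ are the unique singleton orbits and $\partial_\infty f_0$ must already match them. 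The point is that one never needs to decide which alternative holds, nor to produce an inversion.
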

We recall that $\partial_\infty f$ extends $f$ to $\partial_\infty S$; its existence is given by Theorem \ref{thm:pallier-SBE2mobius}.

\begin{proof}[Proof]
Let $\widehat f_0: S' \to S$ be a coarse inverse of $f_0$, that is, $d(\widehat f_0 f_0 x, x) = o(d(1_S,x))$ for all $x \in S$ (See \cite[Section 2]{cornulier2017sublinear}). Note that $\partial_\infty (\widehat{f_0}) \partial_\infty f_0= \mathrm{id}_{\partial_\infty S}$. 
    Denote by $\operatorname{SBE}(Y)$ the group of self-sublinear biLipschitz equivalences of $Y$; then $S$ has a homomorphism to $\operatorname{SBE}(S)$ given by $s \mapsto L_s$, where $L_s$ is the left translation by $s$. The image of $S$ acts transitively on $\partial_\infty S \setminus \lbrace \omega \rbrace$, so the action of $\operatorname{SBE}(S)$ on $\partial_\infty S$ has $i$ orbits, with $i \in \lbrace 1,2 \rbrace$. Since $f_0$ conjugates the former action to that of $\operatorname{SBE}(S')$ on $\partial_\infty S'$, the latter also has $i$ orbits.
        If $i=1$ then there is $h \in \operatorname{SBE}(S')$ such that $h(\partial_\infty f_0(\omega)) = \omega'$; set $f = h \circ f_0$.
        Else, if $i=2$, then finite orbits are sent to finite orbits by $f_0$, and $\partial_\infty S' \setminus \lbrace \omega' \rbrace$ being infinite, $f_0(\omega)=\omega'$.
\end{proof}

\begin{proposition}[Generalization of {\cite[Prop 5.9]{pallier2018large}}]
Let $S$ and $S'$ be purely real Heintze groups, write $S = N \rtimes_{\alpha} \R$ and $S' = N \rtimes_{\alpha'} \R$ with normalized $\alpha$ and $\alpha'$. 
If $S$ and $S'$ are sublinearly biLipschitz equivalent then $\operatorname{tr}(\alpha) = \operatorname{tr}(\alpha')$. 
\label{cor:SBE-between-Heintze-spaces}
\end{proposition}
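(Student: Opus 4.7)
The plan is to exhibit $\operatorname{tr}(\alpha)$ as the $O(u)$-conformal dimension of an arbitrary relatively compact open subset of the punctured Gromov boundary, and then invoke the conformal invariance of $\operatorname{Cdim}_{O(u)}$ together with the fact that a sublinearly biLipschitz equivalence produces a $O(u)$-quasisymmetric homeomorphism between Gromov boundaries.

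First, using Theorem \ref{thm:cornulier}, the spaces $S$ and $S'$ are respectively $O(\log)$-SBE to purely real Heintze groups $S_\infty$ and $S'_\infty$ of diagonalizable type built from the semisimple parts $\sigma$ of $\alpha$ and $\sigma'$ of $\alpha'$; since $\operatorname{tr}(\alpha) = \operatorname{tr}(\sigma)$ and $\operatorname{tr}(\alpha') = \operatorname{tr}(\sigma')$, and since $O(\log)$-SBE are in particular $O(u)$-SBE for our admissible $u$, one reduces to proving the statement when $S$ and $S'$ are already of diagonalizable type. Next, Lemma \ref{lem:Cornulier-trick} provides a sublinear biLipschitz equivalence $f : S \to S'$ with $\partial_\infty f (\omega) = \omega'$, and Theorem \ref{thm:pallier-SBE2mobius} promotes $\partial_\infty f$ to a $O(u)$-quasisymmetric homeomorphism between the Gromov boundaries equipped with their visual quasisymmetric structures $\beta$ and $\beta'$. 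In particular, $\partial_\infty f$ restricts to a homeomorphism between the punctured boundaries $\partial_\infty^\ast S \to \partial_\infty^\ast S'$.

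Now choose any nonempty relatively compact open $\Omega \subset \partial_\infty^\ast S$ and an open $\Omega' \subset \partial_\infty^\ast S'$ with $\overline{\partial_\infty f(\Omega)} \subset \Omega' \Subset \partial_\infty^\ast S'$; such $\Omega'$ exists because $\partial_\infty f$ is continuous and fixes the focal points. By Lemma \ref{lem:comparison-quasisym} the visual structures $\beta_{\mid \Omega}$, $\beta'_{\mid \Omega'}$ are equivalent to the algebraic structures $\beta^\ast_{\mid \Omega}$, $(\beta^\ast)'_{\mid \Omega'}$ of Definition \ref{dfn:gen-quasisymmetric}. Composing equivalences, the restriction of $\partial_\infty f$ to $\Omega$ is a $O(u)$-quasisymmetric homeomorphism from $(\Omega, \beta^\ast_{\mid \Omega})$ onto an open subset of $(\Omega', (\beta^\ast)'_{\mid \Omega'})$. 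Applying Proposition \ref{prop:Cdim-conf-inv} to this homeomorphism (with $\Gamma$ the family of nonconstant curves in $\Omega$, which is carried to the nonconstant curves in $\partial_\infty f(\Omega)$) gives
\begin{equation*}
    \operatorname{Cdim}_{O(u)}(\beta^\ast_{\mid \Omega}) = \operatorname{Cdim}_{O(u)}\bigl((\beta^\ast)'_{\mid \partial_\infty f(\Omega)}\bigr).
\end{equation*}
By Proposition \ref{prop:computation-Cdim-Heintze} both members are independent of the chosen open subset and equal $\operatorname{tr}(\alpha)$ and $\operatorname{tr}(\alpha')$ respectively, whence the conclusion.

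The delicate point is the transition, inside the target, between the visual structure $\beta'$ (for which Theorem \ref{thm:pallier-SBE2mobius} directly applies) and the algebraic structure $(\beta^\ast)'$ (for which Proposition \ref{prop:computation-Cdim-Heintze} gives the value of $\operatorname{Cdim}_{O(u)}$); this is what forces one to insert an intermediate relatively compact $\Omega'$, to secure that the image $\partial_\infty f(\Omega)$ stays well inside the chart where Lemma \ref{lem:comparison-quasisym} applies. Once this is arranged, the rest is bookkeeping of the various parameters $k, \ell, m$ through Lemma \ref{prop:modulus-conf-inv} and is handled uniformly by Remark \ref{rem:rewrite-confdim-using-monotonicity}.
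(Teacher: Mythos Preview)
Your proof follows essentially the same route as the paper's: reduce to diagonalizable type via Theorem~\ref{thm:cornulier}, arrange that the boundary map fixes focal points via Lemma~\ref{lem:Cornulier-trick}, pass between the visual and algebraic quasisymmetric structures on a relatively compact open $\Omega$ via Lemma~\ref{lem:comparison-quasisym}, and then read off $\operatorname{tr}(\alpha)$ from Proposition~\ref{prop:computation-Cdim-Heintze} and conformal invariance (Proposition~\ref{prop:Cdim-conf-inv}). Your added care about sandwiching $\partial_\infty f(\Omega)$ inside a relatively compact $\Omega'$ is a welcome clarification of a point the paper leaves implicit.

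One wording issue: you write that ``$O(\log)$-SBE are in particular $O(u)$-SBE for our admissible $u$'', but this fails when $u$ is bounded (the $O(1)$ case is explicitly allowed by the paper's conventions). The fix is immediate and does not affect your argument: composing the given $O(u)$-SBE with Cornulier's $O(\log)$-SBE produces an $O(u')$-SBE with $u' = u + \log$ (still admissible), and since the conclusion $\operatorname{tr}(\alpha) = \operatorname{tr}(\alpha')$ is independent of the class, you may run the rest of the proof with $u'$ in place of $u$.
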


\begin{proof}
By the previously stated theorem \ref{thm:cornulier} of Cornulier we may assume that $S$ and $S'$ are of diagonalizable type. 
Let $\varphi : \partial_\infty^\ast S \to \partial_\infty^\ast S'$ be the boundary mapping of the sublinear biLipschitz equivalence $f$ preserving focal points provided by Lemma \ref{lem:Cornulier-trick}. 
{Let $\Omega$ be a relatively compact subset of $\partial_\infty^\ast S$.}
{Then by Lemma \ref{prop:Cdim-conf-inv}, Theorem \ref{thm:pallier-SBE2mobius} and Lemma \ref{prop:computation-Cdim-Heintze}, letting $\beta^\ast$ and $\beta'^\ast$ be the quasisymmetric structures on $\partial_\infty^\ast S$ and $\partial_\infty S'$ respectively,
\begin{align*}
\operatorname{tr}(\alpha) = \operatorname{Cdim}^\Gamma_{O(u)}(\beta^\ast_{\mid \Omega}) & = \operatorname{Cdim}^{\Gamma'}_{O(u)}(\beta_{\mid \Omega}) \\
& = \operatorname{Cdim}^{\Gamma'}_{O(u)}(\beta'_{\mid \Omega}) = \operatorname{Cdim}^{\Gamma'}_{O(u)}(\beta'^\ast_{\mid \Omega}) =\operatorname{tr}(\alpha'). \qedhere 
\end{align*}
}
\end{proof}

\begin{figure}
    \begin{tikzpicture}[line cap=round,line join=round,>=angle 90,x=0.5cm,y=0.5cm]
        \clip(-5,-5) rectangle (19,5);
        \fill[fill=blue,fill opacity=0.12] (-4,4) -- (-4,-4) -- (4,-4) -- (4,4) -- cycle;
        \fill[fill=blue,fill opacity=0.15] (-1.47,1.05) -- (1.47,1.05) -- (1.47,-1.05) -- (-1.47,-1.05) -- cycle;
        \path[fill=blue,fill opacity=0.2] (-0.54,0.27) -- (0.54,0.27) -- (0.54,-0.27) -- (-0.54,-0.27) -- cycle;
        \draw [->, line width = 0.2pt] (0,-5) -- (0,5);
        \draw [->, line width = 0.2pt] (-5,0) -- (5,0);
        \draw [dash pattern = on 3pt off 2pt] (-4,4) --(4,4) -- (4,-4) -- (-4,-4) -- (-4,4) ;
        \draw [dash pattern = on 3pt off 2pt] (-1.47,1.47) --(1.47,1.47) -- (1.47,-1.47) -- (-1.47,-1.47) -- (-1.47,1.47) ;
        \draw [dash pattern = on 3pt off 2pt] (-0.54,0.54) --(0.54,0.54) -- (0.54,-0.54) -- (-0.54,-0.54) -- (-0.54,0.54)   ;
        \begin{scriptsize}
            \draw (0.6,4.3) node {$1$};
            \draw (0.8,1.75) node {$\small{e^{-1}}$};
            \draw (0.8,0.85) node {$\small{e^{-2}}$};
        \end{scriptsize}
        \draw (-0.2,4) -- (0.2,4);
        \draw (-0.2,1.47) -- (0.2,1.47);
        \draw (-0.2,0.54) -- (0.2,0.54);
        \draw [->, line width = 0.2pt] (12,-5) -- (12,5);
        \draw [->, line width = 0.2pt] (7,0) -- (17,0);
        \fill [fill=blue,fill opacity=0.35] (8,-1) -- (16,-1) -- (16,1) -- (8,1) -- cycle;
        \draw [>-<, line width = 0.2pt] (6.5,1) -- (6.5,-1);
        \draw [>-<, line width = 0.2pt] (8,-2) -- (16,-2);
        \draw (6.5,2)  node {$\sim s$};
        \draw (17.5,-2)  node {$\sim s^{1/\mu}$};
        \draw (14,2) [color =blue, opacity = 0.7] node {$B(s)$};
        \draw (-2.5,-3) [color =blue, opacity = 0.6] node {$B(1)$};
        \draw (11.8,1) -- (12.2,1);
        \draw (12.5,1.4) node{$s$};
    \end{tikzpicture}
    \caption[Boundaries of Heintze groups of diagonalizable type]{Concentric balls of a quasidistance on $\R^2$ that is invariant under translation and dilation by $\exp(t\operatorname{diag}(1, 4/3))_{t \in \R}$, and coincides with the $\ell^\infty$ distance for pairs of points at distance $1$. Compare Figure \ref{fig:example-illustr-thm}.} 
    \label{fig:example-illustr-thm-diag}
\end{figure}
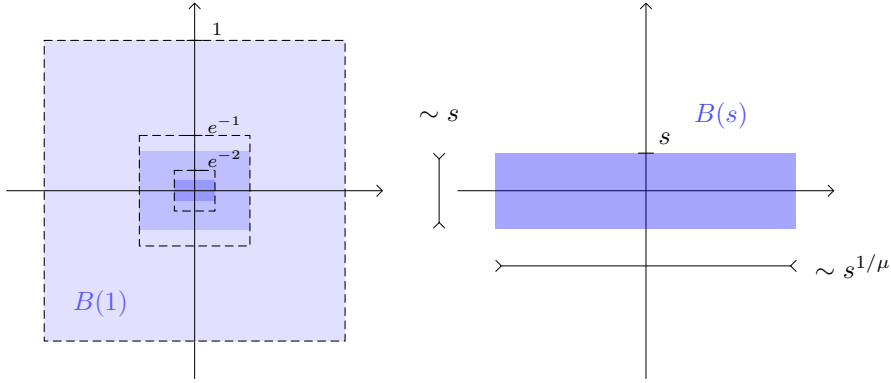

\subsubsection{Proof of the main theorem}
\label{subsubsec:proof-of-main-thm-roots}

Let $u$ be an admissible sublinear function.

\begin{lemma}[Compare {\cite[6.1]{PansuDimConf}} for $u=1$]
\label{lem:lower-bound-energy}
Let $S$ be a Heintze group of diagonalizable type with data $(N,\alpha)$. 
Let $\Omega$ be an open supspace of $\partial_\infty^\ast S$ identified with $N$ and equipped with a quasisymmetric stucture $(\beta, \delta, q)$.
Let $k \in \mathcal{O}^+(u)$.
For every $p \in [1, + \infty)$,  if $f \in \mathscr{V}^{p;k}_{\ell;\mathrm{loc}.}(\Omega)$ with $\ell \geqslant q\dotplus k$ then $f$ is locally invariant along the left cosets of $H$, where 
\begin{equation}
     \mathfrak h = \operatorname{Liespan} \left\{ \operatorname{ker} (\alpha - \mu) : \mu < \frac{ \operatorname{tr}(\alpha)}{p} \right\}.
\end{equation}
\end{lemma}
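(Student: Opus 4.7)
The plan is to derive invariance by a contradiction that combines the capacity–modulus inequality (Lemma \ref{lem:capacities-and-moduli}) with the lower bound on packing moduli of eigencurve families (Lemma \ref{lem:lower-bound-heintze}). Concretely, I would first reduce to showing that for each eigenvector $v \in \ker(\alpha - \mu)$ with $\mu < \operatorname{tr}(\alpha)/p$, the function $f$ is locally constant along the one-parameter subgroup $\{\exp tv\}$; since $N$ is simply connected nilpotent, the analytic subgroup generated by these one-parameter subgroups is exactly the connected subgroup $H$ of Lie algebra $\mathfrak{h}$, and local invariance under finitely many commuting flows extends to invariance under the group they generate by a Frobenius-type/BCH argument that goes through for merely continuous $f$.

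Now fix such a $v \in \ker(\alpha - \lambda)$ and suppose, for contradiction, that there exists $\xi_0 \in \Omega$ where $f$ is not constant along the $v$-coset in any neighborhood of $\xi_0$. Switching to an equivalent quasisymmetric structure $\beta^\ast_{v,W}$ (justified by Lemma \ref{lem:comparison-quasisym}) and using the continuity of $f$ together with compactness, I would construct an abstract ball $\widehat{b} \subset \Omega$ containing $\xi_0$, two disjoint closed subsets $\partial_0 C, \partial_1 C \subset \overline{\widehat{b}} \setminus \widehat{b}$, and a positive-$\omega_v$-measure subfamily $\Gamma \subset \Gamma_v(\widehat{b})$ of eigencurves joining $\partial_0 C$ to $\partial_1 C$. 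After an affine normalization so that $f_{\mid \partial_0 C} \leqslant 0$ and $f_{\mid \partial_1 C} \geqslant 1$, the function $f$ is admissible for the capacity, so
\begin{equation*}
\operatorname{cap}_{p;k}^{\ell}(C) \leqslant \mathbf{V}_{p;k}^{\ell}(f)(C) < +\infty
\end{equation*}
by local boundedness of the $p$-variation. On the other hand, since $p < \operatorname{tr}(\alpha)/\lambda$, Lemma \ref{lem:lower-bound-heintze} yields $\operatorname{pmod}_{p;k}^{\ell,m}(\Gamma) = +\infty$ (the family $\Gamma$ carries the same infinite modulus as $\Gamma_v(\Omega^-)$ once a suitable inner set is identified, by monotonicity of packing moduli in the family). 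Combining with Lemma \ref{lem:capacities-and-moduli} gives $+\infty = \operatorname{pmod}_{p;k}^{\ell,m}(\Gamma) \leqslant \operatorname{cap}_{p;k}^{\ell}(C) < +\infty$, the desired contradiction.

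The principal obstacle lies in matching the shift parameter $\ell$: Lemma \ref{lem:lower-bound-heintze} produces the infinite modulus for $\ell = q \dotplus r \dotplus k$ with $r=1$, whereas the hypothesis only provides $\ell \geqslant q \dotplus k$. The gap $q \dotplus 1 \dotplus k - q \dotplus k$ is bounded (essentially a constant shift of the doubling function $q$), and since packing moduli are monotone nondecreasing in $\ell$, one can absorb this discrepancy by choosing the realization of $\Gamma$ at depth $n$ slightly shallower, so that $\ell \geqslant q \dotplus k$ already dominates the $\ell$ needed by Lemma \ref{lem:lower-bound-heintze} on those scales. A secondary subtlety is the positive measure of $\Gamma$: non-constancy along the $v$-coset through $\xi_0$, combined with continuity of $f$ and the invariance of $\omega_v$ under translation in $W$, guarantees that the set of nearby $v$-cosets on which the oscillation of $f$ exceeds a fixed threshold has positive $\omega_v$-measure, which is what allows Lemma \ref{lem:lower-bound-heintze} to apply nontrivially.
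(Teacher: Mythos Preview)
Your proposal is correct and follows essentially the same route as the paper: argue by contradiction along a single eigenvector $v\in\ker(\alpha-\mu)$ with $\mu<\operatorname{tr}(\alpha)/p$, build a condenser whose plates are separated by the eigencurves $\Gamma_v$, and chain Lemma~\ref{lem:capacities-and-moduli} with Lemma~\ref{lem:lower-bound-heintze} to obtain $+\infty=\operatorname{pmod}^{\ell,0}_{p;k}(\Gamma_v)\leqslant\operatorname{cap}^{\ell}_{p;k}(C)\leqslant\mathbf V^{\ell}_{p;k}(f)(C)<+\infty$. Two of your worries are unnecessary: the paper takes the condenser directly as a product box $C=\{we^{sv}:s\in(0,\varepsilon),\,w\in F\}$ with $F\subset\exp W$ chosen small by continuity of $f$ (so no separate ``positive-$\omega_v$-measure subfamily'' argument is needed), and the discrepancy you flag between $q\dotplus 1\dotplus k$ and $q\dotplus k$ is a bounded constant since $r=1$ is constant in Lemma~\ref{lem:lower-bound-heintze}, hence absorbed into the $O(1)$ parameter $q$ of the structure.
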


\begin{proof}

Let $\mu \in (0, \operatorname{tr}(\alpha)/p)$ and let $v \in \ker (\alpha - \mu)$; up to pre-composing $f$ with dilations and translations assume by contradiction that $f(\exp(\varepsilon v)) \neq  f(1)$ for arbitrarily small $\varepsilon$ and that $1 \in \Omega$. 
Up to post-composing $f$ by translations and dilations of $\mathbf R$ one can further assume $f(1)=0$ and $f(\exp(\varepsilon v))>1$.
Construct a condenser $(C, \partial_0 C, \partial_1 C)$ in $\Omega$ as follows: $W$ is a supplementary $\alpha$-invariant subspace of $v$ in $\mathfrak n$, $F$ is a Borel subset of $\exp W$, $C = \lbrace w e^{s  v } : s \in (0, \varepsilon),  w \in F  \rbrace$ and $\partial_i C = \lbrace w e^{i \epsilon  v } \rbrace$.
By Lemma \ref{lem:capacities-and-moduli}, for every $\ell \in \mathcal{O}^+(u)$, $\operatorname{pmod}^{\ell,0}_{p;k}(\Gamma) \leqslant \operatorname{cap}_{p;k}^\ell(C)$, where $\Gamma$ is the family of curves between $\partial_0C$ and $\partial_1C$, which includes $\Gamma_v$. By Lemma \ref{lem:lower-bound-heintze}, $\operatorname{pmod}^{\ell,0}_{p;k}(\Gamma_v)=+\infty$ if $\ell \geqslant q \dotplus k$, and then $\mathbf V_{p;k}^\ell(f)(C)=+\infty$, a contradiction. So $f$ was indeed $\langle v \rangle$-invariant, and then locally invariant on the left cosets of $H$.
Finally, allow $f$ to take complex values. 
\end{proof}

We assume from now on that $N$ is abelian, identify it (as well as $\mathfrak{n}$) with $\R^d$ and decompose
$\mathbf{R}^d = \bigoplus_{i=1}^r \ker (\alpha - \mu_i) = \bigoplus_{i=1}^r \langle e_i^1 \ldots e_i^{d_i} \rangle$.
Let $f^j_i \in (\mathbf R^d)^\vee$ denote the dual basis of linear forms.

\begin{lemma}
\label{lem:upper-bound-energy}
Let $\beta$ be the quasisymmetric structure on $\R^d$ generated by $B=[-1/2,1/2]^d$.
For all $i \in \lbrace 1, \ldots r \rbrace$, for all $j \in \lbrace 1, \ldots d_i \rbrace$, for all $k,\ell \in \mathcal{O}^+(u)$, $f_i^j \in \mathscr{V}^{p;k}_{\ell;\mathrm{loc.}}(\beta, \R)$ for $p > \operatorname{tr}(\alpha)/\mu_i$.
\end{lemma}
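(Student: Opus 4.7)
The plan is to show that $\mathbf V_{p;k}^\ell(f_i^j)(K)$ is finite (in fact vanishes) for every compact $K\subset\R^d$, so that $f_i^j\in\mathscr V^{p;k}_{\ell;\mathrm{loc.}}(\beta,\R)$. I will fix $K$ and a bounded neighborhood $K_1$ containing every $\widehat{k.b}$ meeting $K$ at scales $n$ large enough; since the diameter of $\widehat{k.b}$ is bounded by $e^{-(\delta(b)-k(\delta(b)))}$ (recall that the lowest eigenvalue of $\alpha$ is $1$) this will be automatic once $n$ exceeds some threshold depending only on $K$.

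First I would control the shifted gauge $\widetilde{\phi}^\ell$ attached to the oscillation of $f_i^j$. Because $\alpha$ is diagonal in the basis $(e_i^j)$, the concrete ball $\widehat{(x,n')}=x+e^{-n'\alpha}B$ is a parallelotope with side $e^{-n'\mu_i}$ along $e_i^j$, and its enlargement $\widehat{\ell.(x,n')}$ has the corresponding side $e^{-(n'-\ell(n'))\mu_i}$; the coordinate form $f_i^j$ therefore has oscillation bounded by a multiple of $e^{-(n'-\ell(n'))\mu_i}$ on $\widehat{\ell.(x,n')}$. Given a $(k,n)$-outer ring $(a^-,a^+)$ with associated ball $b$ of scale $n'=\delta(b)\geq n$, any $\ell$-ring extension $(a^-,\widetilde{a})$ is witnessed by some $b'$ with $\widehat{b'}\subseteq a^-\subseteq\widetilde{a}\subseteq\widehat{\ell.b'}$, and axiom (SC1)(iii) forces $\delta(b')\geq n'$ since $\widehat{b'}\subseteq\widehat{b}$. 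Using the sublinearity $\ell(m)=o(m)$, I obtain for every $\varepsilon>0$ a bound
\[ \widetilde{\phi}^\ell(a^-)\leq C \exp\bigl\{-(1-\varepsilon)n'\mu_i\bigr\} \]
valid for $n'$ large enough (and zero by convention when $a^-$ contains no concrete ball).

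Next I would count the size of any $(k,n)$-packing $\mathscr P$ centered on $K$ by a volume argument. The outer sets $a^+$ are pairwise disjoint and each contains $\widehat{k.b_{\mathbf a}}$, a parallelotope of Lebesgue volume $|B|\exp\bigl\{-(n'-k(n'))\operatorname{tr}(\alpha)\bigr\}$ which meets $K$ and, for $n$ large, lies in $K_1$. The number $N_{n'}$ of elements of $\mathscr P$ at scale exactly $n'$ therefore satisfies
\[ N_{n'}\leq \frac{|K_1|}{|B|}\exp\bigl\{(n'-k(n'))\operatorname{tr}(\alpha)\bigr\}. \]
Combining the gauge estimate with this count,
\[ \sum_{\mathbf a\in\mathscr P}\widetilde{\phi}^\ell(a^-)^p \leq C'\sum_{n'\geq n}\exp\bigl\{n'\bigl(\operatorname{tr}(\alpha)-p(1-\varepsilon)\mu_i\bigr)+O(u(n'))\bigr\}. \]
Since $p>\operatorname{tr}(\alpha)/\mu_i$, I can choose $\varepsilon$ small enough that $\operatorname{tr}(\alpha)-p(1-\varepsilon)\mu_i<0$; the leading geometric decay then dominates the sublinear $O(u(n'))$ correction, the series converges, and its tail vanishes as $n\to+\infty$. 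Uniformity of the bound in $\mathscr P$ yields $\mathbf V_{p;k}^\ell(f_i^j)(K)=0$, hence local finiteness.

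The delicate step is not the estimate but the bookkeeping attached to the shifted gauge: for a given outer ring, the witnessing ball $b'$ inside $a^-$ need not coincide with $b_{\mathbf a}$ and can have $\delta(b')$ much larger than $n'=\delta(b_{\mathbf a})$, so the oscillation bound varies with $b'$ while the volume count is naturally indexed by $n'$. The key reconciling point is again (SC1)(iii) together with the uniform sublinearity of $\ell$, which allow one to replace the dependence on $\delta(b')$ by one on $n'$ at the cost of an arbitrarily small $\varepsilon$ in the exponent; the convergence analysis then runs as above.
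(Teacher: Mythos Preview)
Your proof is correct and follows the same strategy as the paper: bound the shifted oscillation $\widetilde{\phi}^\ell(a^-)$ via the explicit scaling of the coordinate form on dilated boxes, using (SC1)(iii) and the sublinearity of $\ell$ to pass from $\delta(b')$ to $n'$, and then control the packing by a Lebesgue-volume disjointness argument. The paper organizes the volume step slightly differently: instead of stratifying by the scale $n'$ and summing a geometric series, it bounds $\sum_{\mathbf a}\nu(\widehat{\underline b})\leqslant\nu([-1,1]^d)=2^d$ in one stroke (all the $\widehat{\underline b}$ are disjoint, not just those at a fixed scale) and then compares $\widetilde{\phi}^\ell(a^-)^p$ termwise with $\nu(\widehat{\underline b})$, obtaining $\mathbf V_{p;k}^\ell(f_i^j)(B)\leqslant 2^d$. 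Your stratified version is a little less economical in the counting but has the bonus of yielding $\mathbf V_{p;k}^\ell(f_i^j)(K)=0$ outright, since your bound is the tail of a convergent series; the paper relegates this vanishing to a subsequent remark.
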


\begin{proof}
Let $\nu$ be a Haar measure on $N$, normalized so that $\nu(B) = 1$.
Set $p=(1+\epsilon)\operatorname{tr}(\alpha)/\mu_i$ with $\epsilon>0$.
We need prove that $\mathbf V_{p;k}^\ell(f_i^j)$ is locally finite for every $\epsilon$ and $\ell \in \mathcal{O}^+(u)$. 
We may as well prove that $\mathbf V_{p;k}^\ell(f_i^j)(B)<+\infty$.
Let $n\in \Z_{\geqslant 0}$.
Recall that by definition $\mathbf V_{p;k}^\ell(f_i^j)(B)$ is $\operatorname{P}\widetilde \Phi_{p;k}^{\ell}(B)$ for $\phi(b) = \operatorname{osc}(f_{\mid b})$, so that
$\phi(e^{-\alpha n}B)^p=(e^{-\mu_i n})^p =e^{-\operatorname{tr}(\alpha)(1+\epsilon)n}$ and $\phi$ increases with respect to inclusion. 
If $\mathscr{P}\in \operatorname{Packings}_{k,n}(B)$, enclose into each $(a^-, a^+)$ of $\mathscr{P}$ a pair $(\widehat{\underline b}, \widehat{k.\underline{b}})$ and note that the $\widehat{\underline{b}}$ are disjoint (indeed, even the $\widehat{k. b}$) are); for $n$ large enough they are also contained in $[-1,1]^d$ (since the $\widehat{\underline{b}}$ all intersect $B$) so 
\begin{equation*}
    \sum_{\mathbf{a} \in \mathscr{P}} \nu (\widehat{\underline{b}}) =  \nu \left( \bigcup_{\mathbf a \in \mathscr{P}} \widehat{k.\underline{b}} \right) \leqslant \nu([-1,1]^d) =2^d.
\end{equation*}
From there, and using that $\nu(\widehat{{b}})=e^{-\operatorname{tr}(\alpha)\delta(b)}$ for every $b \in \beta$, and that $\ell$ is sublinear, for $n$ large enough
\begin{equation}
    \sum_{\mathbf{a} \in \mathscr{P}} {\widetilde \phi^\ell}(a^-)^p \leqslant e^{p\ell(n)} \sum_{\mathbf{a} \in \mathscr{P}} \phi(\widehat{\underline{b}})^p \leqslant \sum_{\mathbf{a} \in \mathscr{P}} \nu (\widehat{\underline{b}}) \leqslant 2^d.
\end{equation}
This is a uniform bound for all packings so $\mathbf V_{p;k}^\ell(f_i^j)(B) <+\infty$.
\end{proof}

\begin{remark}
Actually, the $p$-variation of coordinates (or even Lipschitz) functions in the corresponding directions is zero, as can be obtained by replacing $\nu$ with $\mathcal{H}^d$ with $d$ slightly greater than $\operatorname{tr}(\alpha)$ in the previous proof.
To get functions with nonzero yet finite $p$-variation one should form linear combinations of the examples constructed in appendix \ref{exm-non-exm-Ou-qs} composed with coordinates.
\end{remark}

\begin{remark}
The lower bound on variations obtained in the proof of Lemma \ref{lem:lower-bound-energy}, resp.\ the upper bound given by Lemma \ref{lem:upper-bound-energy} can be compared to Xie's \cite[Lemma 4.2]{XieLargeScale} resp.\ \cite[Lemma 4.5]{XieLargeScale}. Xie's technique for the lower bound is essentially different.
\end{remark}

{Let $S$ and $S'$ be two purely real Heintze groups and let $\varphi : \partial_\infty^\ast S \to \partial_\infty^\ast S'$ be the extension of a sublinearly biLipschitz equivalence $f : S \to S'$ preserving the focal points; equip $\partial_\infty^\ast S$ with its abelian Lie group structure and split it into $E_1={\operatorname{span}}_{\mu < \operatorname{tr}(\alpha)/p} \left\{ \ker (\alpha - \mu) \right\}$ and a complementary subspace $E_2$, and similarly decompose $\partial_\infty^\ast S' = E_1' \oplus E_2'$.
For $z \in \partial_\infty^\ast S$, denote by $z_1$ and $z_2$ the projections onto $E_1$ and $E_2$.
Write $\varphi(z_1, z_2) = (\varphi_1 (z_1, z_2), \varphi_2(z_1, z_2))$ where $\varphi_i : E_1 \times E_2 \to E'_i$ for $i \in \lbrace 1, 2 \rbrace$.
For every $(z_1,z_2) \in \partial_\infty^\ast S$, introduce
\[ \mathcal{C}(z) = \left\{ y_1 \in E_1 : \varphi_2(y_1, z_2) = \varphi_2 (z_1, z_2) \right\} \]
and note that $\mathcal{C}(z)$ is nonempty (as it contains $\lbrace{z_1} \rbrace$) and closed.

\begin{lemma}
\label{lem:connectedness-to-prove-that}
For all $z \in \partial_\infty^\ast S$, $\mathcal{C}(z)$ (as defined above) is open in $E_1$.
\end{lemma}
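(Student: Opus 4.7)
The plan is to show that $\mathcal{C}(z)$ is open by exhibiting, for each $y_1 \in \mathcal{C}(z)$, an $E_1$-neighbourhood of $y_1$ on which $\varphi_2(\cdot, z_2)$ equals $\varphi_2(z_1, z_2)$. This reduces to the fact that $\varphi_2$ is locally invariant in the $E_1$-direction, which I will extract by pushing coordinate functions of $E_2'$ through $\varphi$ and then applying the rigidity Lemma \ref{lem:lower-bound-energy}.

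First I would pick $p > 0$ so that $\operatorname{tr}(\alpha)/p = \operatorname{tr}(\alpha')/p$ (this equality being provided by Proposition \ref{cor:SBE-between-Heintze-spaces}) is not an eigenvalue of either $\alpha$ or $\alpha'$; this is possible since the spectra are finite. With this choice, $E_2'$ is precisely the sum of $\alpha'$-eigenspaces with eigenvalue strictly greater than $\operatorname{tr}(\alpha')/p$, so the coordinate forms $f_1', \ldots, f_s'$ of $E_2'$ associated with an $\alpha'$-eigenbasis satisfy the hypothesis of Lemma \ref{lem:upper-bound-energy}. Consequently each $f_j'$ lies in $\mathscr V^{p;k}_{\ell;\mathrm{loc.}}(\partial_\infty^\ast S')$ for every $k, \ell \in \mathcal{O}^+(u)$.

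Next I would transport this membership to the source. By Theorem \ref{thm:pallier-SBE2mobius}, $\varphi$ is a $O(u)$-quasisymmetric homeomorphism, so the continuous embedding \eqref{eq:continuous-embeddings-algebras} yields $f_j' \circ \varphi \in \mathscr V^{p;\bar\eta'\circ\bar\eta(k)}_{\ell;\mathrm{loc.}}(\Omega)$ on any relatively compact open $\Omega \subset \partial_\infty^\ast S$, for appropriately shifted parameters. Lemma \ref{lem:lower-bound-energy} then forces $f_j' \circ \varphi$ to be locally invariant along the cosets of $H = \operatorname{Liespan}\{\ker(\alpha - \mu) : \mu < \operatorname{tr}(\alpha)/p\}$, which under the present splitting is exactly $E_1$. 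Since the $f_j'$ separate points of $E_2'$, this translates into local $E_1$-invariance of the $E_2'$-valued map $\varphi_2$ itself; applied at the point $(y_1, z_2)$, this provides the desired $E_1$-neighbourhood $U$ of $y_1$ contained in $\mathcal{C}(z)$.

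The main bookkeeping obstacle I anticipate is the matching of the $\mathcal{O}^+(u)$-parameters. Lemma \ref{lem:lower-bound-energy} requires $\ell \geqslant q \dotplus k$ for the structure on $\partial_\infty^\ast S$, whereas the parameters after pullback are shifted by the functions $\eta, \bar\eta, \eta', \bar\eta'$ attached to $\varphi$. Since Lemma \ref{lem:upper-bound-energy} grants membership of the $f_j'$ for arbitrary $(k, \ell)$, one first fixes $k$, uses \eqref{eq:continuous-embeddings-algebras} to determine the shifted $k'$ on the source side, and only then chooses $\ell$ sufficiently large in the $\mathcal{O}^+(u)$-asymptotic to dominate $q \dotplus k'$. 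This bootstrap is routine given the monotonicity already exploited throughout Section \ref{sec:conformal-invariants}, so the real content lies in the combination of the two energy estimates via the quasisymmetric transport.
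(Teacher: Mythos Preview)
Your proposal is correct and follows essentially the same strategy as the paper: establish bounded $p$-variation of the relevant coordinate forms via Lemma~\ref{lem:upper-bound-energy}, transport through the $O(u)$-quasisymmetric homeomorphism using \eqref{eq:continuous-embeddings-algebras}, and then invoke the rigidity Lemma~\ref{lem:lower-bound-energy} to obtain local invariance along $E_1$. The only notable difference is the direction of transport: the paper pushes the $E_2$-coordinate functions $g_i$ from the source forward to $\Omega'$ and applies Lemma~\ref{lem:lower-bound-energy} on the target, whereas you pull the $E_2'$-coordinates $f_j'$ back to $\Omega$ and apply the rigidity on the source; your direction is arguably more direct for the conclusion $\mathcal{C}(z)$ open in $E_1$, and the parameter bookkeeping you outline is handled the same way in both versions.
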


\begin{proof}
As $\mathcal{C}(z) = \mathcal{C}(y_1, z_2)$ for every $y_1 \in \mathcal{C}(z)$, it suffices to prove that $\mathcal{C}(z)$ is a neighborhood of $z_1$.
Let $\Omega$ be a relatively compact open set containing $z$. Denote $\Omega'=\varphi(\Omega)$.
Denote by $\beta$ and $\beta'$ respectively the quasisymmetric structures on $\Omega$ and $\Omega'$ constructed from a Gromov kernel based at $1 \in S, S'$ and denote by $\beta^\ast$ and $\beta'^\ast$  quasisymmetric structures on $\Omega$ and $\Omega'$ associated with Definition \ref{dfn:gen-quasisymmetric}.
Since $\Omega$ and $\Omega'$ have been assumed relatively compact, $\beta$ and $\beta^\ast$ are equivalent by Lemma \ref{lem:comparison-quasisym} and there is a sequence of $O(u)$-quasisymmetric homeomorphisms
\[ (\Omega, \beta^\ast) \overset{\mathrm{id}}{\longrightarrow} (\Omega, \beta) \overset{\varphi}{\longrightarrow} (\Omega', \beta') \overset{\mathrm{id}}{\longrightarrow} (\Omega', \beta'^\ast) \]
Let $\eta, \eta', \overline{\eta}, \overline{\eta}'$ be associated to the $O(u)$-quasisymmetric homeomorphism $\varphi^{-1}:(\Omega', \beta'^\ast) \to (\Omega, \beta^\ast)$ as in \ref{subsec:sublinear-quasiconformality}.
Introduce the following sets: $F = (z+E_1) \cap \Omega$, $F'=(\varphi(z)+E'_1) \cap \Omega'$, and let $F_0$, resp.\ $F'_0$ be the connected component of $F$, resp.\ $F'$ containing $z$, resp.\ $\varphi(z)$.
$F$ is defined inside $\Omega$ by the vanishing of coordinate functions $g_1, \ldots g_s$ with $s = \dim E_2$.
Define $g'_1, \ldots g'_s$ as $g'_i = \varphi_\ast g_i$ ; $\varphi(F)$ is defined in $\Omega'$ by the vanishing of $g'_1, \ldots g'_s$.
Let $q$ be such that axiom (SC2) holds for $\beta^\ast$.
Fix $k, \ell \in \mathcal{O}^+(u)$ such that $\ell \geqslant q \dotplus \overline{\eta}' \circ \eta(k)$.
Using the second embedding in the sequence \eqref{eq:continuous-embeddings-algebras} applied to $\varphi^{-1}$, and the fact that $g_i \in \mathscr{V}^{p;\overline{\eta}(k)}_{\eta'(\ell);\mathrm{loc.}}$ for all $i \in \lbrace{1, \ldots, s}\rbrace$ by Lemma \ref{lem:upper-bound-energy}, one has that $g'_i \in \mathscr{V}^{p;\overline{\eta}'\circ \overline{\eta}(k)}_{\ell;\mathrm{loc.}}(\Omega,\beta^\ast)$ for all $i \in \lbrace{1, \ldots, s}\rbrace$.
By Lemma \ref{lem:lower-bound-energy}, $g'_i$ is locally constant on $F'$, hence zero on its connected component containing $\varphi(z)$. This proves that $\varphi(F_0) \subseteq F'_0$ and the lemma as $F_0$ is open in $z+E_1$.
\end{proof}

By connectedness of $E_1$, Lemma \ref{lem:connectedness-to-prove-that} implies that $\mathcal{C}(z) = E_1$ for all $z$, $\varphi_2$ only depends on the second coordinate $z_2$ and the foliation of $\partial_\infty^\ast S$ by subspaces parallel to $E_1$ is preserved. As $\varphi_2$ is necessarily injective, $s = \dim E_2 \leqslant \dim E_2'$. By symmetry, $\dim E_2 = \dim E_2'$. From there one deduces that
\begin{equation}
     \forall p \in [1, + \infty), \sum_{\mu \geqslant \operatorname{tr}(\alpha)/p} \dim \ker(\alpha-\mu) = \sum_{\mu \geqslant \operatorname{tr}(\alpha')/p} \dim \ker(\alpha'-\mu)
\end{equation}
which implies that $\alpha$ and $\alpha'$ have the same characteristic polynomial. Since they have been assumed diagonalizable with all eigenvalues real and greater or equal than $1$, they are conjugated and the groups $S, S'$ are isomorphic.
}

\subsection{Comparisons and comments}

\subsubsection{$\ell^p$-equivalence relation}
There are other algebras on the boundary of  hyperbolic spaces, the extensions (modulo $\R$) of representatives of $\ell^pH^1(X)$ to $\partial_\infty X$. Bourdon and Kleiner have studied the corresponding equivalence relations, called the $\ell^p$-equivalence relations see e.g. \cite[10]{BourdonKleinerCLPi}.
For Heintze groups of diagonalizable type, comparing our result with that provided by Carrasco Piaggio \cite{CarrascoOrliczHeintze}, the $\ell^p$-equivalence relations coincides with those we obtain for $\mathscr{V}^{p;k}_{\ell;\mathrm{loc.}}$ algebras for adequate $k$ and $\ell$, except perhaps at the critical degrees.

\subsubsection{Quasiisometric classification of diagonalizable Heintze groups}
The result, subsumed by Xie's work \cite{XieLargeScale}, that two quasiisometric purely real metabelian Heintze groups of diagonalisable type are isomorphic is due to Pansu. Sequeira recently recovered it using relative $L^p$-cohomology \cite[Theorem 1.5]{Sequeira}. 
The quasiisometry invariance of the characteristic polynomial of $\alpha$ holds in general
\cite{CarrascoSequeira}.

\subsection{Fuchsian buildings}

\label{subsec:fuchsian}

The point here is to show that $\operatorname{Cdim}_{O(u)}$ equals $\operatorname{Cdim}$ in this case, following Bourdon's proof; we provide a few details of this proof. 

\subsubsection{Fuchsian buildings}

We recall below a definition according to Bourdon \cite[2]{BourdonFuchsII}.
Let $r \geqslant 3$ be an integer, let $R$ be a polygon in $\mathbb H^2$ with $r$ vertices labeled by $\Z/r\Z$ and angles $\pi / m_i$ where $m_i \geqslant 2$ for every $i \in \Z/ r \Z$. $R$ is the fundamental domain for a cocompact Fuchsian representation of the Coxeter group
\[ \mathrm W = \langle s_i \mid s_i^2, (s_{i} s_{i+1})^{m_i} \rangle, \]
where $\langle s_i \rangle$ stabilizes the edge between vertices $i$ and $i+1$. For every $i \in \Z / r \Z$, let $q_i \geqslant 2$ be an integer. Let $\mathbf m, \mathbf q: \Z/r \Z \to \Z_{\geqslant 0}$ be the corresponding data. 
A cell $2$-complex ${\Delta}$ is the geometric realization of a Fuchsian building (we will not distinguish between them) if
\begin{enumerate}[(FB1)]
    \item 
    Each $2$-cell is isomorphic to the labelled $R$, and each $1$-cell with label $i$ lies in exactly $(1+q_i)$ $2$-cells, those are called chambers.
    \item
    Each pair of distinct $2$-chambers is contained in a subcomplex isomorphic (as a labelled cell complex) to the Coxeter complex of $(\mathrm{W}, \lbrace s_i \rbrace)$, those are called apartments.
    \item
    Given two apartments $A$ and $A'$ with at least one common $2$-cell $C$, the identity map of $C$ extends to an isomorphism of labelled complexes $A \to A'$.
\end{enumerate}

The Bourdon buildings are those for which $m = 2$ (they are called right-angled) and $q_i$ are constants. A building of such type always exists provided $p \geqslant 5$, and is uniquely defined\footnote{In general a building of type $(r, \mathbf m, \mathbf q)$ may or may not exist, and may or may not be unique up to isomorphism of labelled complexes.}; it is usually denoted by  $I_{pq}$, where the thickness $q$ designates the constant\footnote{The shift between $q$ and $q_i$ is here to conform with the building of $\operatorname{SL}(3,\Q_\ell)$ where links are incidence graphs of the projective planes over the residue field so that edges are incident to $1+\ell$ cells.} $q_i+1$ and $p$ designates $r$.
Once the chambers are equipped with the hyperbolic metric, Fuchsian building are $\mathrm{CAT}(-1)$ spaces in view of the description of their links and Ballmann's criterion, we refer to \cite{BourdonFuchsII} and reference therein for these facts as well as many examples. 

\subsubsection{Weighted combinatorial distance}
Starting from a Fuchsian building ${\Delta}$ one can associate to it a dual graph $\mathscr{G}({\Delta})$ whose vertices are the chambers of ${\Delta}$, edges record adjacency, and they are assigned length $\log q$ for edges of type $q$.
Choosing any embedding of the Cayley graph of $\operatorname{W}$ with respect to the $\lbrace{s_i} \rbrace$ as a subgraph of $\mathscr{G}(\Delta)$ yields a distance on $\operatorname{W}$; for $w \in \operatorname{W}$, $\vert w \vert_{\mathbf q}$ denotes the length of $w$ for this distance.
The { growth rate} of $\operatorname{W}$ with respect to $\mathbf q$ is 
$\mathscr{T} := \limsup_n \frac{1}{n} \log \sharp \left\{ w \in \operatorname{W} : \vert w \vert_{\mathbf q} \leqslant n \right\}$; this can be made more explicit \cite[3.1.1]{BourdonFuchsII} (for the Bourdon building the growth rate with no weight is $\operatorname{argch}((p-2)/2))$ so that $\mathscr{T}=\operatorname{argch}((p-2)/2)/\log(q-1)$ for $I_{pq}$). 
The distance between two chambers $d,d'$ in $\Delta$ is denoted by $\vert d - d'\vert_{\mathbf q}$, this is $\vert w \vert_{\mathbf q}$ for $w$ such that $d=w.d'$ in any common apartment. The distance $\vert \cdot - \cdot\vert_{\mathbf q}$ on $\mathscr{G}(\Delta)$ is quasiisometric to the $\operatorname{CAT}(-1)$ metric on $\Delta$, especially it is Gromov-hyperbolic.

\subsubsection{Measure on marked apartments}
Given a chamber $c$ in $\Delta$, let $\mathcal{F}_c$ denote the space of embeddings of the Coxeter complex marked at $c$ into $\Delta$.
There is a unique probability measure $\nu$ on $\mathcal{F}_c$ such that for any chamber $d$, $\nu [\pi \in \mathcal{F}_c: \pi \ni d] =e^{-\vert d - c \vert_{\mathbf q}}$ \cite[2.2.4]{BourdonFuchsII}.

\subsubsection{Geodesic metric on the boundary}
The Gromov product on $\partial_\infty \Delta$ associated to $\vert \cdot \vert_{\mathbf q}$ is denoted by $(\xi, \eta) \mapsto \lbrace{\xi,\eta}\rbrace_c$. 
For $\xi,\eta$ in $\partial_\infty \Delta$,
$\varrho(\xi,\eta)=\exp \left(-\mathscr{T}\lbrace{\xi,\eta}\rbrace_c \right)$ and then 
$\delta(\xi,\eta)=\inf \sum \varrho(\xi_i,\xi_{i+1})$
over chains $\xi=\xi_0 \ldots \xi_s=\eta$ in $\partial_\infty \Delta$.
Bourdon proves that $\delta$ and $\varrho$ are comparable (this is the most involved part of the proof; the details for this point are given in \cite[p.362]{BourdonFuchsI}), and that $\operatorname{Hdim}(\partial_\infty \Delta)$ equals $1+1/\mathscr{T}$ \cite[2.2.7]{BourdonFuchsII}.
Once this is proven, $\delta$ induces the same quasisymmetric structure on the boundary, and by Lemma \ref{lem:confdim-less-than-hausdim}, $\operatorname{Cdim}_{O(u)} \partial_\infty \Delta \leqslant 1+1/\mathscr{T}$.

\subsubsection{Diffusivity condition and lower bound}

\begin{lemma}[After Bourdon  {\cite[2.2.2]{BourdonFuchsII}}]
Let $(Z,d)$ be an Ahlfors-regular metric space. 
Let $\beta$ be the associated quasisymmetric structure. Let $\Gamma$ be a family of rectifiable curves in $Z$ whose lengths are  nonzero and bounded above by a uniform constant. Let $d\gamma$ be a measure on $\Gamma$. 
Let $p'$ be greater than $1$.
If there exists $\eta <+\infty$ such that
\begin{equation}
     \forall b \in \beta, \, \log \int_{\Gamma} \mathbf{1}_{\gamma \cap \widehat{b} \neq \emptyset} d\gamma - (1-p') \delta(b) \leqslant \eta,
    \tag{$\mathrm D'(p')$}
    \label{eq:bourdon-diffusivity}
\end{equation}
then $\operatorname{Cdim}_{O(u)}^\Gamma (\beta) \geqslant p'$.
\end{lemma}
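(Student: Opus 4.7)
The plan is to deduce this from Pansu's diffusivity Proposition \ref{prop:diffusivity} by equipping each curve $\gamma \in \Gamma$ with its normalized arc-length probability measure $m_\gamma = \mathcal{H}^1(\cdot \cap \gamma) / \mathcal{H}^1(\gamma)$. The task then reduces to verifying the integrated condition \eqref{ineq:diffusivity-assumption} with $\tau$ that can be made arbitrarily small; since the conclusion of Proposition \ref{prop:diffusivity} is $\operatorname{mod}_{p;k}^{\ell,m}(\Gamma) \geqslant \tau^{-1} \int_\Gamma d\gamma$, this forces the modulus to be infinite and gives the claimed lower bound on $\operatorname{Cdim}_{O(u)}^\Gamma(\beta)$.

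The key step is to compare the $m_\gamma$-mass of $\gamma \cap \widehat{r.\underline{b}}$ to the metric radius of $\widehat{r.\underline{b}}$. If $\gamma$ meets $\widehat{\underline{b}}$ at $x_0$, and $\widehat{r.\underline{b}}$ is the metric ball of radius $\rho_n \asymp e^{-n+r(n)}$ with $n = \delta(\underline b)$, then by the metric triangle inequality (available because $(Z,d)$ is metric, not just quasimetric) the Euclidean ball $B(x_0, \rho_n - e^{-n})$ lies in $\widehat{r.\underline{b}}$; for $n$ large this radius is $\geqslant c\, e^{-n+r(n)}$ for an absolute constant $c$. Rectifiability then gives a sub-arc of $\gamma$ through $x_0$ of arc-length $\geqslant c\,e^{-n+r(n)}$ inside $\widehat{r.\underline{b}}$, so
\[ m_\gamma\!\left( \gamma \cap \widehat{r.\underline{b}} \right) \geqslant \min\!\left(1,\, c L^{-1} e^{-n+r(n)}\right) \geqslant c L^{-1} e^{-n+r(n)} \]
uniformly in $\gamma$, where $L$ is the uniform upper bound on lengths.

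Raising to the power $1-p<0$, integrating over $\Gamma$, and invoking the hypothesis \eqref{eq:bourdon-diffusivity} yields
\[ \int_\Gamma m_\gamma(\gamma \cap \widehat{r.\underline b})^{1-p} \mathbf{1}_{\gamma \cap \widehat{\underline b} \neq \emptyset} \, d\gamma \;\leqslant\; (L/c)^{p-1} e^\eta \exp\!\bigl( n(p-p') - (p-1) r(n) \bigr). \]
Since $p<p'$ and $r=O(u)=o(n)$, the right-hand side tends to $0$ as $n \to +\infty$, so \eqref{ineq:diffusivity-assumption} is verified with $\tau$ arbitrarily small, for every $r \in \mathcal{O}^+(u)$. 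Proposition \ref{prop:diffusivity} then gives $\operatorname{mod}_{p;k}^{\ell,m}(\Gamma) = +\infty$ with $\ell = q \dotplus r \dotplus k$ (using that $\int_\Gamma d\gamma > 0$ since $\Gamma$ carries a nontrivial positive measure); as $k \in \mathcal O^+(u)$ was arbitrary, $\operatorname{Cdim}_{O(u)}^\Gamma(\beta) \geqslant p$, and letting $p \nearrow p'$ concludes.

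The main obstacle, minor but requiring some care, is the uniform lower bound on $m_\gamma(\gamma \cap \widehat{r.\underline b})$: it is precisely here that both rectifiability and the uniform upper bound $L$ on lengths enter, together with the fact that one is working in a genuine metric space rather than a quasimetric one. The role of Ahlfors regularity is implicit, in that it ensures that the $\delta$-radii of the balls $\widehat b$ are comparable to their metric radii uniformly, so that the triangle-inequality estimate above is valid with constants independent of $\underline b$.
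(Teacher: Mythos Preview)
Your proof is correct and follows essentially the same approach as the paper: equip each $\gamma$ with normalized arc-length, bound $m_\gamma(\gamma \cap \widehat{r.\underline b})$ below via the (reverse) triangle inequality, and verify $\mathrm{D}(p,r)$ from $\mathrm{D}'(p')$ for $p<p'$ with arbitrarily small $\tau$, then invoke Proposition~\ref{prop:diffusivity}. Two minor remarks: your constant $c$ requires $r(n)\geqslant 1$, so ``for every $r\in\mathcal{O}^+(u)$'' should read ``for some nonzero $r$'' (which is exactly what the paper assumes and is all that is needed for the $\exists \ell$ in Definition~\ref{dfn:conformal-dimension}); and Ahlfors regularity is not actually used in either argument---the comparability you invoke comes directly from the metric-ball realization of Example~\ref{exm:space-with-a-quasidist}.
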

Let us check that \eqref{eq:bourdon-diffusivity} implies \eqref{ineq:diffusivity-assumption} provided $p>p'$ and $r \in \mathcal{O}^+(u)$ is nonzero.
Since $\gamma \in \Gamma$ has been assumed rectifiable, they bear  normalized arclength measures $m_\gamma$ of total mass $1$. 
    \begin{figure}[t]
    \begin{center}
    \begin{tikzpicture}[line cap=round,line join=round,>=angle 45,x=1cm,y=1cm]
    \clip(-3,-2.8) rectangle (3,2.8);
    \fill (0,0) [opacity=0.1, color=blue] circle (0.5cm);
    \fill (0,0) [opacity=0.1, color=blue] circle (2.5cm);
    \draw [rotate around={30:(0,0)}] plot[line width = 1pt, domain=-2:3,variable=\t]({\t},{0.1*cos(300*\t)});
    \draw (1,1.2) node{$\gamma$};
    \draw [<->] (0.5,0) -- (2.5,0);
    \draw (1.5,-1) node {$ \left( e^{r(\delta(b))} - 1 \right)e^{-\delta(b)}$} ;
    \draw (-0.5,0.5) [color=blue] node {$ \widehat b$} ;
    \draw (-1.5,1.5) [color=blue] node {$ \widehat r.b$} ;
    \end{tikzpicture}
    \end{center}
\caption[Figure for inequality \eqref{ineq:reverse-triangle}]{Inequality \eqref{ineq:reverse-triangle}.}
\label{fig:reverse-triangle}
\end{figure}
By the reverse triangle inequality, for every $\gamma \in \Gamma$ (see Figure \ref{fig:reverse-triangle}),
\begin{equation}
    \label{ineq:reverse-triangle}
    m_\gamma(\gamma \cap \widehat{r.{b}}) \geqslant 
    \mathbf{1}_{\gamma \nsubseteq \widehat{r.b}}
    \cdot \mathbf{1}_{\gamma \cap \widehat{b} \neq \emptyset} \left( e^{r(\delta(b))} - 1 \right)e^{-\delta(b)}  \operatorname{length}(\gamma)^{-1}  ,
\end{equation}
hence if $\delta(b)$ is large enough to ensure that $\gamma \nsubseteq \widehat{r.b}$, one has:
\begin{align*}
    m_\gamma \left( \gamma \cap  \widehat{r.b} \right)^{1-p} \mathbf{1}_{\gamma \cap \widehat{b} \neq \emptyset} 
    & \leqslant \left( e^{r(\delta(b))} - 1 \right)^{1-p} e^{(p-1)\delta(b)} \mathrm{length}(\gamma)^{p-1}\\
    & \leqslant C\,(1-1/e)^{1-p} \exp \left( (p-1) (\delta(b) -r(\delta(b))) \right)
\end{align*}
where $C = \sup_{\gamma \in \Gamma} \operatorname{length}(\gamma)^{p-1}$ is finite by hypothesis.
Now, using \eqref{eq:bourdon-diffusivity} with $p'<p$,
\begin{equation*}
    \int_{\Gamma} m_\gamma (\gamma \cap \widehat{r.b})^{1-p} \mathbf{1}_{\gamma \cap \widehat{b} \neq \emptyset } d\gamma \leqslant C\,e^\eta e^{( p' -p)\delta(b)+r(\delta(b))}.
\end{equation*}
The right-hand side goes to $0$ because $r$ is sublinear, so \eqref{ineq:diffusivity-assumption} holds for every $\tau \in \R_{>0}$.

Going back to Fuchsian buildings it remains to specify $\Gamma$, $d\gamma$ and $p'$. 
Following Bourdon, given a reference chamber in $\Delta$,
$\Gamma$ is the collection of boundaries of apartments containing the reference chamber $c$ : \[ \Gamma = \left\{ \partial_\infty \mathrm{im}(\pi) : \pi \in \mathcal{F}_c \right\}, \]
$d\gamma$ is the measure on $\Gamma$ corresponding to $\nu$ on $\mathcal{F}_c$.
The fact that the $\gamma \in \Gamma$ are rectifiable follows from \cite[2.2.6(ii)]{BourdonFuchsII}.
The condition \eqref{eq:bourdon-diffusivity} for $p'=1+1/\mathscr{T}$ is checked by Bourdon \cite[2.3.8]{BourdonFuchsII}.
By Lemma \ref{prop:diffusivity}, $\operatorname{Cdim}_{O(u)}(\partial_\infty \Delta) > 1 + 1/ \mathscr{T} -\varepsilon$ for every positive real $\varepsilon$ arbitrarily small. This finishes the proof that $\operatorname{Cdim}_{O(u)} \partial_\infty \Delta = 1+1/\mathscr{T}$. Formula \eqref{eq:confdim-bourdon-buildings} folows for the Bourdon buildings.

\begin{appendix}

\section[Examples and non-properties]{Examples and non-properties of {$O(u)$}-quasisymmetric homeomorphisms}
\label{exm-non-exm-Ou-qs}

We construct and examine here certain $O(u)$-quasisymmetric homeomorphisms of the Euclidean plane. 
The construction uses the observation that products of biLipschitz homeomorphisms are quasisymmetric homeomorphisms. We observe that the homeomorphisms constructed do not possess the ACL property.

The first step of the construction is to build a homeomorphism of the circle with controlled (almost Lipschitz in a precise sense) modulus of continuity.
Let $T$ be a rooted infinite binary tree, whose set of vertices $V$ is identified with the set of finite words over the alphabet $\left\{ 0, 1 \right\}$. 
Let $(\epsilon_j) \in (0,1/2)^\N$ be a decreasing sequence with limit $0$. 
To every $\eta \in \lbrace -1, 1 \rbrace^V$ we associate a homeomorphism $\Phi_\eta$ of the circle as follows:
\begin{enumerate}
    \item 
    for each $v$ of length $\vert v \vert$ one associates a real number $\tau^v$ with the binary expansion $v$ : $\tau^v = \sum_{i=1}^{\vert v \vert} v_i 2^{-i}$.
    \item 
    Let $M_\eta(v)$ be the uniform measure on $[0, 2^{- \vert v \vert}]$ with total mass
    \begin{equation*}
        \label{eq:mass-of-M-eta}
        \Vert M_\eta(v) \Vert = \prod_{w \in \mathrm{Pref}(v) \setminus \lbrace v \rbrace} \left( \frac{1}{2} + \eta(w) \epsilon_{\vert w \vert} \right),
    \end{equation*}
    where $\mathrm{Pref}(v)$ denotes the set of prefixes of $v$ (including the empty one).
    \item 
    For any nonnegative integer $\ell$, 
    $ M_\eta^t := \sum_{v \in V : \vert v \vert = t} \tau_\ast^v M_\eta(v)$, where $\tau_\ast^v$ is the pushforward by the translation $x \mapsto x + \tau^v$.
    \item 
    {Let $\Phi_\eta^t$ be the repartition function of $M_\eta^t$; then $\Phi_\eta^t(\tau_v)$ is constant for $t \geqslant \vert v \vert$, so $\Vert \Phi_\eta^t - \Phi_\eta^{t+1} \Vert_\infty \leqslant \sup _{v : \vert v \vert = t} \Vert M_{\eta}(v) \Vert \leqslant (2/3)^t$ for $t$ large enough. 
    By normal convergence, there exists a uniform limit $\Phi_\eta \in \mathrm{Homeo}^+([0,1])$ of the $\Phi_\eta^t$ as $t \to + \infty$.
    Realizing $S^1$ as $[0,1]/\sim$ where $0 \sim 1$ and considering $\eta$ a random variable one may view $\Phi_\eta$ as a random homeomorphism of the circle.}
\end{enumerate}

\begin{proposition}
\label{prop:not-AC}
If $\epsilon_j \notin \ell^1(\N)$ then $\Phi_\eta$ is not absolutely continuous.
\end{proposition}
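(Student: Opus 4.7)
My plan is to show that the Borel probability measure $M_\eta := \lim_t M^t_\eta$, whose cumulative distribution function is $\Phi_\eta$, is mutually singular with respect to Lebesgue measure $\lambda$ on $[0,1]$; since $\Phi_\eta$ is the CDF of $M_\eta$, this is equivalent to the non-absolute-continuity of $\Phi_\eta$. For $x \in [0,1]$ and $n \geq 0$ let $v_n(x)$ be the length-$n$ binary prefix of $x$, and consider the dyadic density $D_n(x) := 2^n \lVert M_\eta(v_n(x)) \rVert$. The sequence $(D_n)$ is a nonnegative martingale on $([0,1], \lambda)$ for the dyadic filtration; by Doob's convergence theorem $D_n \to D_\infty$ $\lambda$-almost everywhere, with $D_\infty = dM_\eta^{\mathrm{ac}}/d\lambda$ the Radon--Nikodym density of the $\lambda$-absolutely continuous part of $M_\eta$. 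It thus suffices to show $D_\infty = 0$ $\lambda$-a.e.

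Unfolding the multiplicative formula for $\lVert M_\eta(v) \rVert$ yields
\[ \log D_n(x) \;=\; \sum_{k=0}^{n-1} \log\!\bigl(1 + 2\epsilon_k\, \xi_k(x)\bigr), \]
where each $\xi_k(x) \in \{\pm 1\}$ is measurable with respect to the first $k+1$ binary digits of $x$ (taking into account both $\eta$ and the child-selecting bit). Conditioning on $v_{\leq k}(x)$, the sign $\xi_k$ is the product of the deterministic sign $\eta(v_{\leq k}(x))$ and the symmetric next-digit indicator $(-1)^{v_{k+1}(x)}$; under $\lambda$ the $(v_{k+1})$ are i.i.d.\ fair Bernoulli, so $(\xi_k)_{k \geq 0}$ is an i.i.d.\ Rademacher sequence regardless of $\eta$.

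Setting $Y_k := \log(1 + 2\epsilon_k\xi_k)$, a short calculation gives $\mathbb{E}_\lambda[Y_k] = \tfrac{1}{2}\log(1 - 4\epsilon_k^2) \leq 0$ and $\operatorname{Var}_\lambda[Y_k] \asymp \epsilon_k^2$ for small $\epsilon_k$. Using $\sum \epsilon_k = \infty$ together with the monotonicity $\epsilon_k \downarrow 0$ and $\epsilon_k \in (0,1/2)$, I would split indices into two regimes: for those $k$ where $\epsilon_k$ is bounded below by a fixed positive constant, one obtains a strictly negative, non-summable drift; for those with $\epsilon_k$ small, a Kolmogorov three-series / law of the iterated logarithm argument shows that the drift $\sum \mathbb{E}_\lambda[Y_k]$ dominates the centered martingale fluctuation. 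Both regimes combine to force $\sum_k Y_k \to -\infty$ $\lambda$-a.s., hence $D_n \to 0$ $\lambda$-a.e., so $M_\eta \perp \lambda$ and $\Phi_\eta$ is not absolutely continuous.

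The main obstacle is the last step: passing from the hypothesis $\sum \epsilon_k = \infty$ to almost-sure divergence of $\log D_n$ under $\lambda$, since the Kakutani--Hellinger criterion for singularity of Bernoulli-type product measures is naturally phrased in $\ell^2$ terms. The key maneuver is the truncation argument splitting small from non-small $\epsilon_k$, exploiting monotonicity to control the tail contributions and reduce the $\ell^1$-hypothesis to an effective drift-dominates-fluctuation statement.
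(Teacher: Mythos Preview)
Your martingale setup is correct: under Lebesgue measure the dyadic density $D_n = 2^n\lVert M_\eta(v_n(\cdot))\rVert$ is a nonnegative martingale, and with the natural reading of the construction (siblings receiving complementary masses) one has $\log D_n = \sum_{k<n}\log(1+2\epsilon_k\xi_k)$ with $(\xi_k)$ i.i.d.\ Rademacher under $\lambda$. The gap is in the last step, and it is structural rather than technical. Your own computation gives $\mathbb{E}_\lambda[Y_k]=\tfrac12\log(1-4\epsilon_k^2)\asymp -\epsilon_k^2$ and $\operatorname{Var}_\lambda[Y_k]\asymp\epsilon_k^2$: both drift and fluctuation are governed by $\sum\epsilon_k^2$, and the hypothesis $\sum\epsilon_k=\infty$ simply does not enter. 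The proposed truncation cannot repair this. Since $\epsilon_k\downarrow 0$, only finitely many indices lie in any ``$\epsilon_k$ bounded below'' regime, so that part contributes a bounded term; on the tail, drift and fluctuation are of the same order, and there is no mechanism by which $\ell^1$-divergence forces the sum to $-\infty$.

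Worse, your framework refutes the claim in the borderline regime. If $\sum\epsilon_k^2<\infty$ while $\sum\epsilon_k=\infty$ (e.g.\ $\epsilon_k=(k+3)^{-1}$, or $\epsilon_k=(k+3)^{-\alpha}$ with $\alpha\in(1/2,1)$), then the independent centered increments $Y_k-\mathbb{E}[Y_k]$ are square-summable, so by Kolmogorov's three-series theorem $\log D_n$ converges $\lambda$-a.s.\ to a finite limit; hence $D_\infty>0$ a.s., $M_\eta\ll\lambda$, and $\Phi_\eta$ \emph{is} absolutely continuous. Thus the conclusion genuinely requires $\sum\epsilon_k^2=\infty$, exactly the Kakutani--Hellinger threshold you flagged. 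For comparison, the paper's route is different in form---it exhibits, for each $\rho>0$, a set of full $\lambda$-measure whose $\Phi_\eta$-image has measure at most $\rho$---but its decisive step is the assertion that $2^t\lVert M_\eta(\gamma_x(t))\rVert$ is $\lambda$-a.e.\ not bounded away from $0$. Since $D_t$ converges a.s., that assertion is equivalent to $D_\infty=0$ a.s., i.e.\ to your target, and therefore rests on the same $\ell^2$ point.
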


\begin{proof}[Proof]
Let $\lambda$ be the Haar measure on $S^1$, and for $t \in \mathbf N_{\geqslant 1}$, let $\Phi^t_\eta$ be the approximation of $\Phi_\eta$ at time $t$ given by $(\Phi^t)' = M^t$. Note that whenever $k$ is an integer with $0 \leqslant k \leqslant 2^t$, one has $\Phi(2^{-t}k) = \Phi^t(2^{-t}k)$.
To every $x \in S^1$ one can associate a geodesic $\gamma_x \subset T$ representing its base $2$ expansion (the finite one for dyadic $x$). Fix $\rho \in (0,1)$. Define
$ A_\eta(\rho) = \left\{ x \in [0,1] : \forall t \in \N, 2^t \Vert M_\eta(\gamma_x(t)) \Vert \geqslant \rho \right\}$.
This is the complementary set in $[0,1]$ of
\begin{align*}
    B_\eta(\rho) & = \left\{ x \in [0,1] : \exists t \in \N, 2^t \Vert M_\eta(\gamma_x(t)) \Vert \leqslant \rho  \right\} \\
    & = \bigcup_{v \in V : \Vert M_\eta(v) \Vert \leqslant 2^{-\vert v\vert} \rho} \left[ \tau^v, \tau^v + 2^{-\vert v \vert} \right] \\
    & = \bigsqcup_{v \in V : 
    \forall w \in \mathrm{Pref}(v) \left( \Vert M_\eta(w) \Vert \leqslant 2^{- \vert w \vert} \rho \implies w = v \right)} \left[ \tau^v, \tau^v + 2^{-\vert v \vert} \right]
\end{align*}
where we used that $\left[ \tau^w, \tau^w + 2^{-\vert w\vert} \right] \supseteq \left[ \tau^v, \tau^v + 2^{-\vert v\vert} \right]$ if and only if $w \in \mathrm{Pref}(v)$, with equality if and only if $v = w$. 
For any $v \in V$ in the set indexing the unions above, $2^{\vert v \vert} \Vert M_\eta(v) \Vert \leqslant \rho$. Now by definition
    \[
    2^{\vert v \vert} \Vert M_\eta(v) \Vert = \frac{\lambda \left( \Phi^{\vert v \vert} [\tau^v, \tau^v + 2 ^{-\vert v \vert}] \right)}{\lambda \left( [\tau^v, \tau^v + 2 ^{-\vert v \vert}] \right) } \]
    so that (omitting the indexation) $\sum_v \lambda \left( \Phi^{\vert v \vert} [\tau^v, \tau^v + 2 ^{-\vert v \vert}] \right) \leqslant \rho \sum_v \lambda \left( [\tau^v, \tau^v + 2 ^{-\vert v \vert} \right)$.
It follows that the $\lambda$-measure of $\Phi_\eta(B(\rho))$ is smaller than $\rho$ for all $\rho$:
\begin{align*}
\lambda(\Phi(B_\eta(\rho))) & =  \sum_{v \in V : 
    \forall w \in \mathrm{Pref}(v) \left( \Vert M_\eta(w) \Vert \leqslant 2^{- \vert w \vert} \rho \implies w = v \right)} \lambda \left( \Phi^{\vert v \vert} \left[ \tau^v, \tau^v + 2^{-\vert v \vert} \right] \right) \\
    & \leqslant \sum_{v \in V : 
    \forall w \in \mathrm{Pref}(v) \left( \Vert M_\eta(w) \Vert \leqslant 2^{- \vert w \vert} \rho \implies w = v \right)} 2^{- \vert v \vert} \rho \leqslant \rho,
\end{align*}
where we used that the intervals $\left[ \tau^v, \tau^v + 2^{-\vert v \vert} \right]$ under consideration are disjoint so that the sum of their measures is $\leqslant 1$.
On the other hand, if $\epsilon_j \notin \ell^1(\N)$ then 
\[ \lambda(B_\eta(\rho)) = 1 - \lambda\left( A_\eta(\rho) \right) = 1 - 0 = 1, \]
since for almost every $x$, the sequence $(2^t \Vert M_\eta (\gamma_x(t)) \Vert)$ is not bounded away from $0$ : up to a null set (the dyadics) one may identify $([0,1], \lambda)$ with the shift space of geodesics rays in $T$ and consider $A_\eta(\rho)$ as an event of probability zero.
Especially $\lambda \left( \bigcap_{\rho \downarrow 0} B_\omega(\rho) \right)  = 1$, whereas the image of this set by $\Phi$ has $\lambda$-measure $0$.
\end{proof}

From now on assume that $\epsilon_j \notin \ell^1(\N)$ but decays sufficiently fast so that the partial sums remain controlled by $u$ :
\begin{equation}
    \label{hypothesis-on-decay-epsilon}
    \sum_{j \leqslant t} \epsilon_j = O(u(t)),
\end{equation}
where we recall that $u$ is strictly sublinear. 
For instance if $\epsilon_j = (3+j)^{-\alpha}$ with $\alpha \in (0,1)$ one may take $u(t)=t^{1-\alpha}$.

\begin{proposition}
\label{prop:modulus-continuity}
Assume that $\epsilon_j$ decays sufficiently fast so that  \eqref{hypothesis-on-decay-epsilon} holds.
Then there exists $v \in O(u)$ such that for all $\eta \in \lbrace 0, 1 \rbrace^V$
\begin{equation}
    \label{eq:expansion-modulus}
    \log l(\Phi_\eta, s) \leqslant \log s + v(-\log s)
\end{equation}
and
\begin{equation}
    \label{eq:modulus-continuity}
    \log s - v(-\log s) \leqslant \log L(\Phi_\eta, s) 
\end{equation}
where 
$l(\Phi_\eta, s) = \sup \left\{ \vert \Phi_\eta(x) - \Phi_\eta(y) \vert : \vert x - y\vert \leqslant s  \right\}$
and  \\
$L(\Phi_\eta, s) = \inf \left\{ \vert \Phi_\eta(x) - \Phi_\eta(y) \vert : \vert x - y\vert \geqslant s  \right\}$.
\end{proposition}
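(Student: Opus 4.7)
The plan is to reduce both inequalities to direct estimates on the masses $\|M_\eta(v)\|$ of dyadic intervals, exploiting the hypothesis \eqref{hypothesis-on-decay-epsilon}.

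I would first unroll the definition. For a word $v$ of length $n$, denoting $w_j$ its prefix of length $j$,
\[
\|M_\eta(v)\| = \prod_{j=0}^{n-1}\left(\tfrac{1}{2} + \eta(w_j)\epsilon_j\right) = 2^{-n}\prod_{j=0}^{n-1}\bigl(1 + 2\eta(w_j)\epsilon_j\bigr).
\]
Since $0 < 2\epsilon_j < 1$, the inequalities $\log(1+x) \leq x$ (any $x \geq 0$) and $\log(1-x) \geq -2x$ (valid for $x \in [0,1/2]$) yield
\[
\exp\!\left(-4\sum_{j=0}^{n-1}\epsilon_j\right) \leq 2^n \|M_\eta(v)\| \leq \exp\!\left(2\sum_{j=0}^{n-1}\epsilon_j\right).
\]
By \eqref{hypothesis-on-decay-epsilon} there exists $\psi \in O(u)$ such that $\bigl| \log\|M_\eta(v)\| + n\log 2 \bigr| \leq \psi(n)$ for every $\eta$ and every $v$ of length $n$.

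For \eqref{eq:expansion-modulus}: given $s \in (0,1)$, let $n$ be the integer with $2^{-(n+1)} < s \leq 2^{-n}$. Any pair $x,y$ with $|x-y|\leq s$ lies in the union of at most two consecutive dyadic intervals $I_{v_1}, I_{v_2}$ of level $n$. Since $\Phi_\eta$ is nondecreasing and sends each $I_v$ to an interval of length $\|M_\eta(v)\|$, we get $|\Phi_\eta(x) - \Phi_\eta(y)| \leq \|M_\eta(v_1)\| + \|M_\eta(v_2)\| \leq 2 \cdot 2^{-n}\exp(\psi(n)) \leq 4s\exp(\psi(n))$. Hence $\log l(\Phi_\eta,s) \leq \log s + \psi(n) + \log 4$. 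Because $u$ is nondecreasing and doubling, and $n \leq -\log_2 s + 1$, we have $\psi(n) + \log 4 \leq v(-\log s)$ for some $v \in O(u)$, proving \eqref{eq:expansion-modulus}.

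For \eqref{eq:modulus-continuity}: given $|x-y| \geq s$ with $2^{-(n+1)} < s \leq 2^{-n}$, the interval $[x,y]$ has length $> 2^{-(n+1)}$, hence contains at least three consecutive integer multiples of $2^{-(n+3)}$ and therefore a full dyadic interval $I_v$ of level $n+3$. Consequently
\[
|\Phi_\eta(x) - \Phi_\eta(y)| \geq \|M_\eta(v)\| \geq 2^{-(n+3)}\exp(-\psi(n+3)) \geq (s/16)\exp(-\psi(n+3)),
\]
so $\log L(\Phi_\eta, s) \geq \log s - \psi(n+3) - \log 16$, yielding \eqref{eq:modulus-continuity} after enlarging $v$ as before. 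The only (mild) obstacle is the bookkeeping between the dyadic scale $2^{-n}$ and the continuous scale $s$, together with the shift $n \mapsto n+3$; the doubling property of $u$ ensures that these only multiply errors by bounded constants, while uniformity in $\eta$ is automatic since only $|\eta(w)|=1$ enters.
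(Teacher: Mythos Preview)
Your proof is correct and follows essentially the same strategy as the paper: bound the dyadic masses $\|M_\eta(v)\|$ above and below using \eqref{hypothesis-on-decay-epsilon}, then cover $[x,y]$ by two adjacent dyadic intervals for the upper bound and find a dyadic interval inside $[x,y]$ for the lower bound. Your bookkeeping is in fact slightly more careful than the paper's (you justify the level shift $n\mapsto n+3$ explicitly, whereas the paper's claim that $[x,y]$ contains a dyadic interval of length $2^{-1-t}$ is borderline); the only cosmetic point is that the inequality $\log(1-x)\geq -2x$ requires $x\leq 1/2$, i.e.\ $\epsilon_j\leq 1/4$, but since $\epsilon_j\to 0$ this holds for all but finitely many $j$ and the remaining terms are absorbed into the constant.
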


\begin{proof}
Define $t = - \left\lceil \log_2 s \right\rceil$. If $\vert x - y \vert \leqslant s$, then $[x,y]$ is contained in the union of two adjacent dyadic intervals of length $2^{-t}$. Let $\gamma$ and $\gamma'$ be the corresponding geodesic segments in $T$. Then 
\[ \left\vert \Phi_\eta(x) - \Phi_\eta(y) \right\vert \leqslant \Vert M_\eta(\gamma(t)) \Vert + \Vert M_\eta (\gamma'(t)) \Vert \leqslant 2 \prod_{j=0}^{t-1} \left( \frac{1}{2} + \epsilon_j \right), \]
Hence
$
    \log \left\vert \Phi_\eta(x) - \Phi_\eta(y) \right\vert  \leqslant (1-t) \log 2 + \sum_{j=0}^{t-1} \log(1+2 \epsilon_j) 
     \leqslant \log s + v(- \log s)
$
where $v = O(u)$. Similarly, if $\vert x - y \vert \geqslant s$ then $[x,y]$ contains a dyadic interval of length $2^{-1-t}$ with associated geodesic segment $\gamma$ so that 
\[ \left\vert \Phi_\eta(x) - \Phi_\eta(y) \right\vert \geqslant \Vert M_\eta(\gamma) \Vert \geqslant \prod_{j=0}^{t-1} \left( \frac{1}{2} - \epsilon_j \right), \]
providing \eqref{eq:expansion-modulus}.
\end{proof}

\begin{remark}
The aim of Proposition \ref{prop:modulus-continuity} is only to give a modulus of continuity (and a reverse modulus of continuity) for $\Phi_\eta$. 
However we expect the deviation of $\log \left\vert \Phi_\eta(x) - \Phi_\eta(y) \right\vert$ from $\log \vert x - y \vert$ to be typically much lower because of Lindeberg's version of the central limit theorem \cite[Satz II]{LindebergCLT}.
\end{remark}

\begin{remark}
$M_\eta$ is homogeneously multifractal in the sense of Buczolich and Seuret \cite{BuczolichSeuret}, and its multifractal spectrum is concentrated at $\lbrace 1 \rbrace$.
Especially Proposition \ref{prop:modulus-continuity} provides examples for \cite[Proposition 9]{BuczolichSeuret}.
\end{remark}

We can now produce homeomorphisms of $\R$ in the following way: for every $k \in \Z$, choose $\eta_k \in \left\{ -1, 1 \right\}^V$, produce a measure $M_{\eta_k}$ on $[0,1]$, and then set $\mu = \sum_{k \in \Z} k_\ast \mu_{\eta_k}$. Finally $\psi :\R^2 \to \R^2$ is such that $\psi(s) = \int_0^s d \mu$.
This may be considered a random process if $\eta_k$ are considered random variables.

\begin{proposition}
\label{prop:example-SQC}
Let $\Psi : \R^2 \to \R^2$ be defined by $\Psi(x_1,x_2) = (\psi_1(x_1), \psi_2(x_2))$ where $\psi_1$ and $\psi_2$ are as above.
Then $\Psi$ is a $O(u)$-quasisymmetric\footnote{The $O(1)$-quasisymmetric structure, and then the $O(u)$-quasisymmetric structure on $\R^2$, will not depend on the norm, compare \cite[p.78]{HeinonenLectures}.} homeomorphism.
\end{proposition}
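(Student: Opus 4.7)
The plan is to reduce the entire verification to a coordinate-wise modulus-of-continuity estimate, working throughout with the $\ell^\infty$ norm on $\R^2$ so that concrete balls of the standard $O(1)$-quasisymmetric structure $\beta$ are axis-aligned squares. First I would check that the construction yields a homeomorphism $\psi_i : \R \to \R$: since each $\mu_{\eta_k}$ has total mass $1$, the primitive $\psi_i(s) = \int_0^s d\mu$ satisfies $\psi_i(s+1) = \psi_i(s) + 1$ and restricts on $[k,k+1]$ to the translate $x \mapsto k + \Phi_{\eta_k}(x-k)$. Combining Proposition \ref{prop:modulus-continuity} with this periodic structure and the triangle inequality across unit intervals yields a single $v \in O(u)$ such that for all $x, y \in \R$ with $0 < \vert x - y \vert \leqslant 1$,
\begin{equation*}
\vert x - y \vert \, e^{-v(-\log \vert x - y \vert)} \leqslant \vert \psi_i(x) - \psi_i(y) \vert \leqslant \vert x - y \vert \, e^{v(-\log \vert x - y \vert)}.
\end{equation*}
The same estimate holds for the inverses $\psi_i^{-1}$ (with possibly a larger $v$ in $O(u)$), by inverting the inequalities.

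Second, I would translate this into a distortion statement for concrete balls of $\beta$. Since $\Psi = \psi_1 \times \psi_2$ acts coordinate-wise and $\ell^\infty$-balls factor as products of intervals, the image of a concrete ball centered at $x$ with $\delta$-value $n$ contains the concrete ball centered at $\Psi(x)$ of $\delta$-value $n + \lceil v(n) \rceil$ and is contained in the concrete ball centered at $\Psi(x)$ of $\delta$-value $n - \lceil v(n) \rceil$; the same bounds hold for $\Psi^{-1}$.

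To verify Definition \ref{dfn:equivalence}, let $(a^-, a^+) \in \mathscr{R}^k_n(\beta)$ with $\widehat{b} \subseteq a^- \subseteq a^+ \subseteq \widehat{k.b}$. The concrete ball $\widehat{b'}$ centered at $\Psi(x)$ of $\delta$-value $n + \lceil v(n) \rceil$ is then contained in $\Psi(\widehat{b}) \subseteq \Psi(a^-)$, while $\Psi(a^+) \subseteq \Psi(\widehat{k.b})$ is contained in the concrete ball centered at $\Psi(x)$ of $\delta$-value $n - k(n) - \lceil v(n-k(n)) \rceil$. This exhibits $(\Psi(a^-), \Psi(a^+))$ as an $\eta(k)$-ring based at scale $\lfloor n + v(n) \rfloor$, with $\eta(k) = v \dotplus k \dotplus v$ (modulo absorbing additive constants); stability of $\mathcal{O}^+(u)$ under $\dotplus$ together with the doubling property of $u$ guarantees $\eta(k) \in \mathcal{O}^+(u)$. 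Applying the same argument to $\Psi^{-1}$ gives the reverse inclusion, establishing \eqref{eq:finer-conformal-structure}. The outer-ring condition \eqref{eq:finer-conformal-structure-outer-rings} is handled symmetrically: given $j' \in \mathcal{O}^+(u)$, take $j = j' \dotplus v \dotplus v$ (plus bounded additive constants), and the same distortion bounds exhibit the image of a $(j,n)$-outer ring as a $(j', \lfloor n - v(n) \rfloor)$-outer ring. The main obstacle is the bookkeeping of these shifts in scale: one has to verify carefully that each modified asphericity remains in $\mathcal{O}^+(u)$ after $\dotplus$-composition, for which the doubling hypothesis on $u$ is essential.
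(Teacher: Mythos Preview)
Your approach is correct and somewhat more streamlined than the paper's. Both arguments work in the $\ell^\infty$ norm and rest on the two-sided modulus-of-continuity bound from Proposition \ref{prop:modulus-continuity}, but they diverge in how the $O(u)$-quasisymmetry is verified. The paper first rephrases Definitions \ref{dfn:equivalence}--\ref{dfn:sublin-conf-homeo} as a three-point cross-ratio condition on sequences $(x^n,y^n,z^n)$, and then must split into cases according to which coordinate realizes $\Vert y^n-x^n\Vert$ and $\Vert z^n-x^n\Vert$ (the index sets $I^y_0,I^y_1,I^y_2$ and their analogues for $z$), handling separately the situations where the dominant coordinates agree, differ, or are ambiguous. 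You instead exploit directly that $\Psi$ is a product map and $\ell^\infty$-balls are products of intervals, so the image of every concrete ball is a rectangle sandwiched between two concentric concrete balls with $\delta$-values shifted by $\pm\lceil v\rceil$; the ring and outer-ring conditions \eqref{eq:finer-conformal-structure}--\eqref{eq:finer-conformal-structure-outer-rings} then follow by one line of inclusion-chasing. Your route avoids the case analysis entirely at the cost of the $\dotplus$-bookkeeping you flag; the paper's route has the merit of making the analogy with the classical $\eta$-quasisymmetry condition \eqref{eq:definition-quasisym-homeo} explicit. One small point to tighten: when you write $\eta(k)=v\dotplus k\dotplus v$ you should also record that the new base scale is $n+\lceil v(n)\rceil$ rather than $n$, so that $\eta(k)$ is genuinely a function of the new $\delta$-value; this is harmless since $u$ is doubling and $n\mapsto n+\lceil v(n)\rceil$ is asymptotically the identity, but it is where the doubling hypothesis actually enters.
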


\begin{proof}
Equip $\R^2$ with the sup norm.
Rephrasing Definitions \ref{dfn:equivalence} and \ref{dfn:sublin-conf-homeo} we need to prove that for every $K \in \R_{\geqslant 1}$ and $k \in O(u)$ there exists $L \in \R_{\geqslant 1}$ and $\ell \in O(u)$ such that for any sequence $(x^n, y^n, z^n)$ of points in $\R^2$,
\[ \begin{cases}
K^{-1}n \leqslant - \log \Vert y^n -  x^n \Vert \\
- \log \Vert y^n -  x^n \Vert \leqslant K n \\
\left\vert \log \frac{\Vert y^n - x^n \Vert}{\Vert z^n - x^n \Vert} \right\vert \leqslant k(n), \end{cases} \implies
\begin{cases}
L^{-1}n \leqslant- \log \left( \Vert \Psi(y^n) - \Psi(x^n) \Vert \right) \\
- \log \left( \Vert \Psi(y^n) - \Psi(x^n) \Vert \right) \leqslant L n \\
\left\vert \log \frac{\Vert \Psi(y^n) - \Psi(x^n) \Vert}{\Vert \Psi(z^n) - \Psi(x^n) \Vert} \right\vert \leqslant \ell(i).
\end{cases}  \]

Write $x^n = (x^n_1, x^n_2)$, similarly for $y^n$ and $z^n$. 
Let $v \in O(u)$ be such that \eqref{eq:modulus-continuity} holds for every $\psi_\alpha$, i.e.
\begin{equation}
    \label{eq:modulus-applied}
    \forall \alpha \in \lbrace 1, 2 \rbrace, \,  \left\vert \log \vert \psi_\alpha(y) - \psi_\alpha(x) \vert - \log \vert y - x \vert \right\vert \leqslant v (- \log \vert y - x \vert).
\end{equation}
Split $\N$ into three index subsets: 
\[ I^y_1= \left\{ n \in \N : -\log \vert y_2^n - x_2^n \vert > -\log \vert y_1^n -x_1^n \vert + 2 v(n) \right\} \]
\[ I^y_2 = \left\{ n \in \N : -\log \vert y_2^n - x_2^n \vert < -\log \vert y_1^n -x_1^n \vert -2v(n) \right\} \]
\[ I_0^y = \left\{ n \in \N : \left\vert \log \vert y_2^n - x_2^n \vert - \log \vert y_1^n -x_1^n \vert \right\vert \leqslant 2v(n) \right\}. \]
Also, define $I_\alpha^z$ and $J^z_\alpha$ in the same way for $\alpha \in \lbrace 0, 1, 2 \rbrace$. 
Note that since $u$ is non-negative, if $\alpha \neq 0$
\begin{equation}
    \label{eq:norme-sup}
    \forall n \in I_\alpha^y, \, 
    \begin{cases}
        \Vert y^n - x^n \Vert = \vert y_\alpha^n - x_\alpha^n \vert \\
        \Vert \Psi(y^n) - \Psi(x^n) \Vert = \vert \psi_\alpha(y_\alpha^n) - \psi_\alpha(x_\alpha^n) \vert
    \end{cases}
\end{equation}
and similar equalities hold for $n$ in $I_\alpha^z$, whereas if $n \in I_0^y$, resp.\ $n \in I_0^z$ then $\log \Vert y^n - x^n \Vert - \log \vert y_\alpha^n - x_\alpha^n \vert \leqslant 2v(Kn+2v(n))$, resp.\ $\log \Vert z^n - x^n \Vert - \log \vert z_\alpha^n - x_\alpha^n \vert \leqslant 2v(Kn+2v(n))$ for any $\alpha \in \lbrace 1, 2 \rbrace$.
By \eqref{eq:norme-sup}, if $\alpha,\beta \in \lbrace 1,2 \rbrace$ then for $n \in I^y_\alpha \cap I^z_\beta$
\begin{equation*}
   \frac{\Vert y^n - x^n \Vert}{\Vert z^n - x^n \Vert} = \frac{ \vert y_\alpha^n - x_\alpha^n \vert}{ \vert z_\beta^n - x_\beta^n \vert}\; \text{and} \;
     \frac{\Vert \Psi(y^n) - \Psi(x^n) \Vert}{\Vert \Psi(z^n) - \Psi(x^n) \Vert} = \frac{ \vert \psi_\alpha(y_\alpha^n) - \psi_\alpha(x_\alpha^n) \vert}{ \vert \psi_\beta(z_\beta^n) - \psi_\beta(x_\beta^n) \vert}
\end{equation*}
so that, taking logarithms and by \eqref{eq:norme-sup} and \eqref{eq:modulus-applied} and \eqref{eq:norme-sup} again
\begin{align*}
\left\vert \log \frac{\Vert \Psi(y^n) - \Psi(x^n) \Vert}{\Vert \Psi(z^n) - \Psi(x^n) \Vert} : \frac{ \Vert y^n - x^n \Vert}{ \Vert z^n - x^n \Vert} \right\vert & \leqslant
\left\vert \log \frac{\Vert \Psi(y^n) - \Psi(x^n) \Vert}{\Vert \Psi(z^n) - \Psi(x^n) \Vert} - \log \frac{ \vert y_\alpha^n - x_\alpha^n \vert}{ \vert z_\beta^n - x_\beta^n \vert} \right\vert \\
& \leqslant 2v \left( - \inf \lbrace \log \vert y_\alpha^n - x_\alpha^n \vert, \log \vert z_\beta^n - x_\beta^n \vert \rbrace \right) \\
& \leqslant 2v \left( - \inf \left\lbrace \log \Vert y^n - x^n \Vert, \log \Vert z^n - x^n \Vert \right\rbrace \right) \\
& \leqslant 2v(Kn + k(n)).
\end{align*}
It remains to treat the case $n \in I_{\alpha}^y \cap I_\beta^z$ with $\inf \lbrace \alpha, \beta \rbrace = 0$; in this event define $\gamma = \sup \lbrace 1, \alpha, \beta \rbrace$. 
Then
\begin{align*}
\left\vert \log \frac{\Vert \Psi(y^n) - \Psi(x^n) \Vert}{\Vert \Psi(z^n) - \Psi(x^n) \Vert} : \frac{ \Vert y^n - x^n \Vert}{ \Vert z^n - x^n \Vert} \right\vert &  \leqslant
\left\vert \log \frac{\Vert \Psi(y^n) - \Psi(x^n) \Vert}{\Vert \Psi(z^n) - \Psi(x^n) \Vert} - \log \frac{ \vert y_{\gamma}^n - x_{\gamma}^n \vert}{ \vert z_{\gamma}^n - x_{\gamma}^n \vert} \right\vert \\
& \quad + 4v(Kn+2v(n)) \\
& \leqslant 2v(Kn + k(n)) + 2v(Kn+2v(n)).
\end{align*}
Setting $L = K$ and $\ell(n) = k(n) + v(Kn+k(n)) + 4v(Kn+2v(n))$ this finishes the proof. 
\end{proof}

Whereas quasiconformal mappings between open domains of\footnote{Quasisymmetric homeomorphisms of the circle that are not absolutely continuous do exist \cite[IV.B, Remark 2]{AhlforsLectures}.} $\R^2$ have the ACL property (see V{\"a}is{\"a}l{\"a} \cite[32.4]{VaisalaQuasiconformal}; this is instrumental for Mostow rigidity in rank one \cite[\S\ 21]{Mostow70}), Propositions \ref{prop:not-AC} and \ref{prop:example-SQC} imply that it fails for general $O(u)$-quasisymmetric homeomorphisms. 
This is why our main efforts in the article are rather directed to global invariants.

\end{appendix}

\bibliographystyle{amsalpha}

\end{document}